\pgfplotsset{width=10cm,compat=1.9}
\newcommand*{\doi}[1]{\href{http://dx.doi.org/#1}{\small{doi:#1}}} 
\definecolor{Green}{RGB}{0,150,1} 
\definecolor{bleu}{RGB}{0,57,128}
\definecolor{orange}{RGB}{255,128,0}
\definecolor{violet}{RGB}{200,0,200}
\definecolor{vert}{RGB}{12,120,20}
\definecolor{bleuc}{RGB}{10,30,90}
\definecolor{rouge}{RGB}{200,0,0}
\newcommand{\Blue}[1]{{\color{blue}{#1}\color{black}\xspace}}
\newcommand{\tabhead}[1]{\textbf{#1}}
\newtheorem{theorem}{Theorem}[section]
\newtheorem{lem}[theorem]{Lemma}
\newtheorem{prop}[theorem]{Proposition}
\newtheorem{rem}[theorem]{Remark}
\newtheorem{defn}[theorem]{Definition}
\newtheorem{hypo}{\bf Hypothesis}
\numberwithin{equation}{section}
\newcommand{\av}[1]{{{\displaystyle{\langle} {#1}\displaystyle{\rangle}}}}
\def\BB{\mathcal{B}}
\def\RR{\mathbb{R}}
\def\Ff{\mathcal{F}}
\def\EE{\mathbb{E}}
\def\PP{\mathbb{P}}
\def\KK{\mathcal{K}}
\def\expSch{\textrm{Exp-EM\xspace}\xspace}
\def\ind{\mathds{1}}
\def\qn1{{X}_{t_{n+1}}}
\newcommand{\Dt}{\Delta t}
\newcommand{\dt}{\delta(t)}
\newcommand{\ds}{\delta(s)}
\newcommand{\oX}{\overline{X}}
\newcommand{\alphab}{{\beta}}
\newcommand{\pntCompact}{\upzeta}
\newcommand{\pntDisc}{\upchi}
\newcommand{\texp}{\overline{S}^{\Delta t}}
\newcommand{\sgn}{\displaystyle {\rm sgn}}
\newcommand{\diffc}{{\text{\scriptsize{$\Upsigma$}}}}
\newcommand{\diffcd}{{\text{\scriptsize{$\Upsigma'$}}}}
\newcommand{\bexp}{b^{\text{exp}}}
\newcommand{\sexp}{{\sigma^{\text{exp}}}}
\newcommand{\Lg}{L_{\textit{G}}\,}
\newcommand{\Lips}{\mathcal{C}_{\textit{max}}}
\newcommand{\osLip}{L_{b_{\text{loc}}}}
\newcommand{\Cczero}{\mathcal{L}^{\scriptsize{\textcircled{\tiny{0}}}}}
\newcommand{\uu}{\frac{\pntDisc_m}{4}}
\newcommand{\wkappa}{\kappa_{\texttt{Weak}}}
\newcommand{\mkappa}[1]{\kappa_{\texttt{Strong}({#1})}}
\newcommand{\hhh}{h}
\begin{document}

\title{
Strong convergence of the exponential Euler scheme for SDEs with superlinear growth coefficients and one-sided Lipschitz drift
}

\author[1]{Mireille Bossy\footnote{mireille.bossy@inria.fr}}
\author[2]{Kerlyns Mart\'inez\footnote{kermartinez@udec.cl}}
\affil[1]{Universit{\'e} C{\^o}te d'Azur, Inria, France}
\affil[2]{Departamento de Ingenier\'ia Matem\'atica, Universidad de Concepci\'on, Chile}
\date{}
\maketitle
\begin{abstract}
We consider the problem of the discrete-time approximation of the solution of a one-dimensional SDE with piecewise locally Lipschitz drift and continuous diffusion coefficients with polynomial growth. 
In this paper, we study the strong convergence of a (semi-explicit) exponential-Euler scheme previously introduced in \cite{BoJaMa2021}.  We show the usual 1/2 rate of convergence for  the  exponential-Euler scheme when the drift is continuous. When the drift is discontinuous, the convergence rate is penalised by a factor $\varepsilon$ decreasing with the time-step. We examine the case of the diffusion coefficient vanishing at zero, which adds a positivity preservation condition and a convergence analysis that exploits the negative moments and exponential moments of the scheme with the help of change of time technique introduced in \cite{BBD08}. Asymptotic behaviour and theoretical stability of the exponential scheme, as well as numerical experiments, are also presented.

\end{abstract}
\tableofcontents

\section{Introduction}\label{sec:intro}
In this paper, we analyse the strong convergence of the exponential Euler scheme for the simulation of  the solution of SDE  with coefficients having superlinear  growth. More precisely, we consider the one-dimensional SDE
\begin{equation}\label{eq:IntroSDE}
dX_t = b(X_t)dt + \sigma(X_t) dW_t,~X_0=x_0>0,
\end{equation}
where $(W_t;0\leq t\leq T)$ is a standard Brownian motion on the probability space $(\Omega,\Ff,\PP)$ equipped with its natural filtration $(\Ff_t;\,0\leq t\leq T)$. 
The diffusion coefficient $\sigma:[0,+\infty)\rightarrow\RR$ is assumed to be continuous with polynomial growth:
\begin{equation}\label{eq:IntroHyposigma}
	\sigma^2(x)\leq \diffc^2 x^{2\alpha},\quad\text{for } x\in\RR^+, \quad\text{ for some } \diffc>0 \text{ and } \alpha >1.
\end{equation}
In particular, we consider the situation where  $\sigma(0) =0$, and wellposedness hypotheses leading to a positive solution to \eqref{eq:IntroSDE}. We show that the existence  and control of moments of the positive solution to \eqref{eq:IntroSDE} is granted  when the drift  $b:[0,+\infty)\rightarrow\RR$  is assumed to be  piecewise locally Lipschitz  function,  with a polynomial growth bound of the form (see Proposition~\ref{prop:XMoments}):
\begin{equation}\label{eq:IntroHypo}
b(x)\leq b(0) + B_1 x - B_2 x^{\alphab},~\forall x\in\RR^+,\quad\text{with } \alphab\geq 2\alpha-1 > 1, 
\end{equation}
for some positive constants $b(0), B_1$ and $B_2$.

Under assumptions ensuring the wellposedness and strict positivity of $X$, the exponential scheme strategy draws on the dynamics artificially transformed into a semi-linear form \eqref{eq:IntroSDE_expoform}. 
A semi-exact scheme in exponential form is then introduced, maintaining the strict positivity of the approximate solution. Moreover, it possesses identical finite moments as the exact solution under the same set of sufficient conditions.  

We refer to  \cite{BoJaMa2021} for some specific examples of applications motivating the study of the exponential scheme  for \eqref{eq:IntroSDE}, as well as for elements of review of the literature on numerical schemes adapted to SDEs with both non-Lipschitz  drift and diffusion  coefficients.   We complete this review with works address the strong error for similar SDE situations.  First, we emphasise that \cite{Kloeden} established the $L^p$-strong divergence, for $p\in[1,+\infty)$, related to the Euler-Maruyama scheme for SDEs with both drift and diffusion satisfying some superlinear growth condition. In \cite{Kloeden2}, the authors proposed a time-explicit tamed-Euler scheme to overcome this \emph{divergence} problem of the Euler approximation, based on renormalised-increments to the scheme.
Recently \cite{Jentzen-b} proved  the ${1}/{2}$ rate of the $L^p$-strong convergence for the tamed-Euler scheme for a family of SDE  that includes some locally Lipschitz cases for both  continuous drift  and diffusion coefficients. A truncated version of the Euler-Maruyama scheme has also been introduced and adapted  for positivity preserving approximation (see \cite{mao2021positivity} and references therein).  
\medskip

While \cite{BoJaMa2021} introduces the exponential Euler scheme and analyses its convergence rate in the weak sense, this paper aims to prove a convergence rate of 1/2 for the $L^p(\Omega)$-$\sup_{t\in[0,T]}$ error for the exponential scheme, as described below. The main objectives are as follows: 
\textit{(i)} To expand the $C^4$ regularity requirement for the drift $b$ in weak error analysis and start to encompass  discontinuous case.
\textit{(ii)} To enhance the control condition concerning the growth and domination of the drift and diffusion to achieve convergence rates.
\textit{(iii)}  To explore and expand the proposition of schemes  applicable to contexts of superlinear coefficients.  
Rather than a priori control of the schema by a tamed/truncated strategy, we aim to identify a robust a posteriori threshold for the approximated process, defining a value range for the scheme that ensures its convergence.   By obtaining a threshold  that expands as we refine the time-step, we pave the way for exploring natural and explicit strategies for adaptive time-step schemes handling increasingly explosive cases of SDEs. 

\subsubsection*{Exponential scheme}

The  ex\-po\-nen\-tial-Euler Maruyama scheme (\expSch for short), originates  from rewriting the SDE \eqref{eq:IntroSDE} into a quasi-linear SDE, taking part of the strict positivity of the solution
\begin{equation}\label{eq:IntroSDE_expoform}
dX_t = X_t \Big( \frac{b(X_t) - b(0)}{X_t}dt + \frac{\sigma(X_t)}{X_t} dW_t\Big) + b(0) dt,\quad X_0=x_0>0.
\end{equation}
The semi-linear integration produces, for an  homogeneous $N$-partition of the time interval $[0,T]$   with  time-step $\Delta t=t_{n+1}-t_n$, the approximation scheme:
\begin{equation}\label{eq:IntroNumericScheme0}
	\overline{X}_{t_{n+1}}=\overline{X}_{t_n}\exp\left\{\frac{\sigma(\oX_{t_n})}{\oX_{t_n}}\;(W_{t_{n+1}}-W_{t_n})+
	\left(\frac{b(\overline{X}_{t_n})-b(0)}{\overline{X}_{t_n}}-\frac12 \frac{\sigma^2(\oX_{t_n})}{\oX_{t_n}^2}\right)\Delta t\right\}+ b(0)\Delta t, \quad \oX_0 = x_0, 
\end{equation}
that preserves the positiveness of the solution. 
We refer the reader to \cite{BoJaMa2021} for a detailed construction of  \eqref{eq:IntroNumericScheme0} in the particular case $\sigma(x) = \diffc x^{\alpha}$. As pointed out, a significant advantage of the \expSch scheme is its ability to preserve the conditions under which the boundedness of moments holds compared to the continuous time process (see Proposition \ref{prop:XMoments} and Lemma \ref{lem:Schememoments}) in contrast with the classical Euler-Maruyama time-discretisation for some locally Lipschitz coefficients (see, for instance, \cite{Kloeden}).

The preservation properties (positivity, moments) and the ease of computational handling (both numerical and analytical) make the \expSch version a robust method that can be employed in diverse simulation problems, including SPDE as proposed in \cite{BDU_positivity_2023} and  \cite{erdogan2023weak}. Specifically, it can prove highly beneficial in a splitting simulation strategy aimed at accurately decomposing the equation to be solved into its exact or semi-exact form.
\medskip

While this paper primarily examines the \expSch scheme applied to  1D positive-valued SDEs (involving possibly non-integer powers $\alpha$ and $\alphab$), it is worth noting that  the \expSch scheme can  readily be extended to handle   SDEs in $\RR^d$, $ d\geq 1$,  featuring more generalized diffusion coefficient class $\sigma(x)$. In such case, the \expSch scheme  can be roughly expressed as:
\begin{equation*}
dX_t = X_t   \Big( ((b(X_t) - b(0)) \div X_t )dt + ((\sigma(X_t) - \sigma(0))\div X_t) dW_t\Big) + b(0) dt + \sigma(0) dW_t,
\end{equation*}
where  $b$ is $\RR^d$- valued and $\sigma$ is a $d\times r$ matrix,  consistent with the $r$-dimensional Brownian motion $W$.   The notation $\div$ is used here for component-wise division.  The \expSch scheme is then driven by the following SDE 
\begin{equation}\label{eq:SDE_intro_extension}
\begin{aligned}
d\overline{X}^{i}_t= & \left(\overline{X}_t-b(0){\dt} -  \sigma(0)  \delta(W_t) \right)^{i} \Big(\frac{(b(\overline{X}_{\eta(t)})-b(0))^{i}}{\overline{X}^{i}_{\eta(t)}} dt+ \frac{(\sigma(\overline{X}_{\eta(t)})-\sigma(0))^{i,k}}{\overline{X}_{\eta(t)}^{i}}   dW^k_t\Big)\\
& + (b(0) dt + \sigma(0) dW_t)^{i}, 
\end{aligned}
\end{equation}
with $\dt = t - \eta(t)$, and  $\delta(W_t)  = W_{t} - W_{\eta(t)}$.  The simplicity of the 1D case is widely exploited in our proofs. Nevertheless, the extension of the analysis to the multidimensional case is straightforward in view of the main argument of stochastic time change to circumvent the problem of stochastic Gronwall estimation. 
The extension to multidimensional  drift with discontinuity strongly depends on the topology of discontinuity sets.  Here also the 1D proof handles discontinuity points based on the ideas of sojourn time estimation of $\RR^d$-valued discrete time diffusion in small ball (\cite{bernardin2009passage}). 

\subsubsection*{Preserving positivity}

Preserving positivity in approximation schemes for multidimensional problems with polynomial coefficients poses a significant challenge, particularly in the context of epidemiological modelling. In this regard, we refer to the works of  \cite{cai2023positivity},  \cite{GREENHALGH2016218}, and the references cited therein.

The preserving positivity issue addressed here presents an additional challenge for the scheme. Indeed,  although chosen in the set of parameters that ensure sufficient conditions for the  Feller test on process $X$ (defining the Feller zone of non-explosion and uniqueness), some parameters strongly impact the behaviour of the exact and approximated processes.  Large values of the processes $X$ and $\oX$ can  abruptly return to zero.  However, the behaviour  of $X$ in the vicinity of zero is well controlled in the Feller zone, in particular when $b(0) =0$,  allowing  to upper bound all  negative moments of the exact process $X$ without additional restriction (see the first item of Proposition \ref{prop:XMoments}).   
Unlike the control of positive moments,   the control of negative moments on the \expSch scheme requires constraining certain parameters or limiting the maximum value of $\oX$ below a threshold (see Proposition \ref{prop:negative_moments}). 

Note that the  $L^p$-hypothesis (or Hasmisnskii type condition), which is commonly used to establish the existence of a superlinear diffusion in $L^p$ (see also Remark \ref{rem:exponentialPrototype} and the discussion in \cite{kelly2022adaptive}), and that typically provides also $L^p$-control for a reasonable  approximation scheme, is insufficient to get control on the scheme negative moments, at least for the \expSch scheme.

\subsection{Summary of the contributions and plan of the paper}

\paragraph{Convergence results. } 
Beyond the usual motivations for the study of strong convergence of numerical schemes -- a crucial role for the convergence of parameter estimators for the SDE model, as well as for the convergence of multilevel methods -- a main objective here is to broaden the prediction of the $L^p$-convergence of the scheme concerning the SDE parameters, further than the theoretical conditions established in \cite{BoJaMa2021} which seem far from being necessary. In section \ref{sec:num}  with the help of numerical tests, we explore the new theoretical sufficient conditions for convergence obtained on the parameters of $X$.  
\medskip
 
The polynomial domination bounds \eqref{eq:IntroHyposigma}-\eqref{eq:IntroHypo} are deeply exploited in our proof -- together with polynomial growth \ref{H:polygrowth} and piecewise locally Lipschitz behaviour \ref{H:pieceloclip} on $b$ -- to identify explicit conditions on $p$ and on the set of parameters  $b(0)$, $B_2$, $\diffc$, $\beta$ and  $\alpha$, for which the $L^p(\Omega)$-theoretical rate is applicable.  

In Theorem \ref{thm:strong_rate_stopped}, we state that the convergence is of order $1/2$ for the  $L^p(\Omega)$-$[\sup_{t\in[0,T\wedge \texp]} \cdot ]$ norm, where $\texp$ is the first exit-time of the \expSch scheme $\oX$ from the set $(0,\Dt^{-\frac{1}{\alphab-1}}]$. 
This result is obtained under a large set of parameters for the coefficients, including the case $b(0)=0$. In such a case, the \expSch scheme \eqref{eq:IntroNumericScheme0} is still positive but the control of negative moments of $\oX$ escapes our analysis (see Proposition \ref{prop:negative_moments}).  The threshold $\Dt^{-\frac{1}{\alphab-1}}$  is going to infinity with $\Dt$. It can be used in numerical experiments as an indicator to locally decrease and adapt   the time-step until the updated value of   $\oX$ falls below the  threshold. This approach will be the subject of future research.

Convergence of the unstopped process is stated in Theorem \ref{thm:strong_rate_non_stopped},  allowing an additional  domination bound on the derivative $b'$  outside a compact.  This additional condition improves overpass the   \textit{stochastic Gronwall estimation} step for which our technique requires the control of some exponential moments for the \expSch process.

Theorems  \ref{thm:strong_rate_stopped} and \ref{thm:strong_rate_non_stopped} are stated in Section  \ref{sec:strong_error_analysis}. Their proofs are decomposed in several steps that we summarise below. 
\begin{description}

\item[\quad The first step] lies in Section \ref{sec:scheme}, where we analyse some properties of $\oX$ such as positive, negative and exponential moments, as well as rate of convergence of the local error $\|\oX^\gamma_t - \oX^\gamma_{\eta(t)}\|_{L^p(\Omega)}$.  Due to the polynomial dominance of the coefficients, the local error is exploited for different values of $\gamma$  (see Proposition \ref{prop:local_error}). 

It is essential to highlight the importance of controlling exponential moments of $\oX$ in establishing the convergence conditions of the scheme, particularly when the drift $b$ lacks adequate regularity. However, it is important to note that we attain only partial control, as indicated in Proposition \ref{prop:schem_ExpoMoment}, which specifically pertains to the stopped process $\oX_{\cdot \wedge \texp}$.

\item[\quad Discontinuity. ] Both Theorem \ref{thm:strong_rate_stopped} and \ref{thm:strong_rate_non_stopped} allow the drift  $b$ to be  discontinuous.  The convergence analysis  consists of isolating the neighbourhood of this discontinuity and showing that the time that the approximation process spends in this small neighbourhood is itself small.  This technique draws inspiration from the methods employed in \cite{bernardin2009passage} and relies primarily on the occupation time formula and Bernstein's inequality. This particular treatment induces a $\varepsilon$ penalty in the convergence rate. 
Note that the result of \cite{bernardin2009passage} applies originally to the analysis of weak convergence.  
We detail this aspect  in the proof of Theorem \ref{thm:strong_rate_stopped} in Lemma~\ref{lem:disc_contrib}. 

\item[\quad Circumvent the stochastic Gronwall estimation step. ] 
For the $L^p$-error analysis, a prerequisite is the boundedness of the $p$-moments for the solution and for the scheme,  that requires $\ind_{\{\alphab=2\alpha-1\}} \ 2p\leq 1+\tfrac{2B_2}{\diffc^2}$ (see Proposition \ref{prop:XMoments} for $X$ and Lemma \ref{lem:Schememoments} for $\oX$).  
However, the nonlinearity in the error dynamics in the proof of Theorem \ref{thm:strong_rate_stopped} necessitates the application of a {\it{stochastic Gronwall lemma}}, which requires the use of exponential moments.  This issue was already  successfully addressed  in 
 \cite{BBD08} to obtain the 1/2-strong rate of convergence associated with a Symmetrized Euler Scheme  for SDEs with diffusion of type $x^\alpha$, and $\alpha\in[\tfrac12,1)$.  \cite{BBD08}  employs  a time change step that  necessitates the control of exponential moments for  $X^{2\alpha-1}$.   Adapted to our case, this requirement is now on the processes $(X\vee\oX)^{\beta -1}$ and $\frac{1}{X}$. While the exponential moments are  controlled for $X$ with additional conditions identified on the parameter set, as  highlighted  in the first step,  the control over the scheme, such as it is, is only applicable to the  stopped process $\oX_{\cdot \wedge \texp}$.  We also emphasise that the technique of the change of time easily extends to multidimensional cases.

\item[\quad Adding polynomial growth assumption on $b'$ far from zero] allows extending the convergence proof to the unstopped $\oX$ in Theorem \ref{thm:strong_rate_non_stopped} improving the convergence rate and simplifying its proof (see Appendix \ref{sec:proof thm regular}).
 
\end{description}
\paragraph{Asymptotic stability. } We also explore the respective asymptotic behaviours of the trajectories of the process and its approximation by  the \expSch scheme, through a stability analysis.   In Section \ref{sec:stability},  for the polynomial coefficients case, we show that the trajectories of $\oX$ admit an interval of stability points, that converges to the unique stability point of $X$.  

\paragraph{Numerical experiments. }
In Section \ref{sec:num}, we assess the performance of the \expSch scheme in the case of polynomial coefficients. The evaluation of the prototypical case confirms the theoretical rate obtained in Theorem \ref{thm:strong_rate_non_stopped} and shows that the parameter conditions for convergence are largely sufficient. Additionally, we will examine discontinuous scenarios or cases with less regular behaviour covered by Theorem \ref{thm:strong_rate_stopped} through numerical experiments. Lastly, we will illustrate the stability of the numerical scheme established in Proposition \ref{prop:asympt_disc}.

\subsection*{Notation}
Throughout this paper, $T>0$ will refer to an arbitrary finite time horizon,  $C$ will denote a positive constant, possibly depending on the parameters of the dynamic, which may change from line to line. Any process $(Z_t, t\in [0,T])$ will be simply denoted by $Z$.  

For any $a,b\in\mathbb{R}$, $a\vee b$ and $a\wedge b$ denote respectively the maximum and minimum between $a$ and $b$. Given the fixed non-negative  discrete time-step parameter $\Delta t$, we set $t_k=k\Delta t$ for $k\in \mathbb{N}$, $\eta(t) =      \Delta t  \lfloor \frac{t}{\Delta t} \rfloor$, $\dt = t - \eta(t)$, and  $\delta(W_t)  = W_{t} - W_{\eta(t)}$. 

\subsection{Strong wellposedness for the solution to SDE \eqref{eq:IntroSDE}}\label{sec:Analytical}

This study is restricted to the case where $\alpha>1$. The deterministic initial position $x_0$ is  strictly positive.
\medskip

Going back to the proof proposed in \cite{BoJaMa2021},  in Proposition \ref{prop:XMoments} below we extend the sufficient conditions ensuring the strong wellposedness of \eqref{eq:IntroSDE} as well as conditions ensuring moments. 
We first introduce the global growth condition we consider on the drift $b$, contributing to the solution's positiveness and non-explosion: the growth rate around the origin is allowed to go above the derivative of the squared diffusion, so $2\alpha -1$. 
\begin{hypo}{\hspace{-0.12cm}\Blue{\bf  \textrm Polynomial Growths. (\ref{H:polygrowth}).}}
\makeatletter\def\@currentlabel{ {\bf\textrm H}$_{\mbox{\scriptsize\bf\textrm{PolyGrowth}}}$}\makeatother
\label{H:polygrowth}
\begin{itemize}
\item [$(b)$] The drift $b$  satisfies that $ 0\leq b(0) < +\infty$,  and  there exists some non-negative constant $\Lg$ and $\alphab\geq 2\alpha-1$ such that 
\begin{equation}\label{eq:b_LocallyLips}
|b(x)-b(0)|\leq \Lg \ (x^{\alphab} \vee x),~\forall x\in \RR^+. 
\end{equation}

\item [$(\sigma)$] The diffusion function $\sigma$ is locally Lipschitz continuous in $\RR^+$.    Furthermore, there exists some non-negative constant $\diffc>0$ such that
\begin{equation}\label{eq:sigma_LocallyLips}
|\sigma(x)|^2 \leq \diffc^2 x^{2\alpha},~\forall x\in \RR^+,
\end{equation}
and the map $x\mapsto \sigma^2(x)\,\sigma^2(x^{-1})$ is positively bounded on the interval $(0,1)$.
\end{itemize}
\end{hypo}

Next, we introduce some conditions on the regularity of the drift $b$. First, we introduce the notion of a piecewise  Lipschitz  function  for the drift $b$, allowing a finite number of discontinuities points. This kind of hypothesis was recently considered in \cite{mullergronbach2022existence}  to extend the locally Lipschitz condition to piecewise locally Lipschitz condition on $b$ in the context of convergence rate analysis for the tamed Euler scheme.
\begin{defn}\label{def:pieceloclip}
We say a function $f : \RR^+ \rightarrow \RR$  is piecewise Lipschitz continuous, if
there are finitely many points $ 0 <  \pntDisc_1 < \ldots <  \pntDisc_m  < +\infty$ such that $f$  is Locally Lipschitz on each of the intervals $(\pntDisc_k,\pntDisc_{k+1})$, $k=0,\ldots m$, with $\pntDisc_0 = 0$ and  $\pntDisc_{m+1} = +\infty$. 
\end{defn}
The piecewise locally Lipschitz continuity, applied to the drift $b$, is restricted to  $\RR^+$  since  the \expSch  scheme here preserves the positiveness of the solution. Moreover, the already stated hypothesis \ref{H:polygrowth} constrains the form of the local Lipschitz constant as follows: 

\begin{hypo}{\hspace{-0.15cm}\Blue{ {\bf\textrm{ Piecewise Locally Lipschitz condition.}} (\ref{H:pieceloclip}).}}
\makeatletter\def\@currentlabel{{{\bf\textrm H}$_{\mbox{\scriptsize\bf\textrm{Piec\_LocLip}}}$\xspace}}\makeatother
\label{H:pieceloclip}
\begin{itemize}
\item[\bf{(i)}. ]  The drift $b$ is  continuous at point $0$ (where the diffusion term vanishes). Moreover $b$ is piecewise locally Lipschitz continuous, in the sense of Definition \ref{def:pieceloclip}: for any $k=0,\ldots,m$, 
\[|b(x)-b(y)|\leq C_{\pntDisc_k}(1+x^{\alphab-1}\vee y^{\alphab-1})|x-y|,\quad\forall x,y\in(\pntDisc_k,\pntDisc_{k+1}).\]

We denote $\Lips = \displaystyle{\max_{1\leq i\leq m} }C_{\pntDisc_i}$.

\item[\bf{(ii)}. ]  The discontinuity jumps are decreasing: for any $k=1,\ldots,m$, $b(\pntDisc_k^+) - b(\pntDisc_k^-) <0.$ 
\end{itemize}
\end{hypo}
Note that that \ref{H:pieceloclip}-{\bf{(i)}} and  \ref{H:polygrowth} are compatible by taking  the growth constant equal to $\Lips$.

\medskip
Under \ref{H:pieceloclip}--{\bf{(i)}}  existence and uniqueness up to an explosion time holds for the solution of \eqref{eq:IntroSDE}. We introduce a  third condition that moves the explosion time to infinity.
\begin{hypo}{\hspace{-0.15cm}\Blue{\bf  \textrm Controls on $b$ (\ref{H:control}).}}
\makeatletter\def\@currentlabel{ {\bf\textrm H}$_{\mbox{\scriptsize{\bf\textrm{Control\_$b$}}}}$}\makeatother \label{H:control}
There exist some non-negative constants $b(0)$,   $B_1$, $B_2$, and $\alphab\geq2\alpha-1$, such that,
\begin{equation}\label{eq:control_b}
b(x)\leq b(0) + B_1 \ x -B_2 \ x^{\alphab},~\forall x\in \RR^+.
\end{equation}
Moreover, $b$ is one-sided locally Lipschitz: for all $x\neq y$ positive,  there exists a positive constant $\osLip$ such that 
\begin{equation}\label{eq:monotony_b}
\frac{b(x) - b(y)}{x -y} \leq \osLip (1+x^{\alphab-1}\vee y^{\alphab-1}).  
\end{equation}
\end{hypo}
\begin{rem} We would like to emphasise the consequences of \ref{H:control} on the parameter set under consideration. 
\item[(i)] Note that \ref{H:control}, in particular $B_2\geq 0$ is a sufficient condition for the non-explosion of  $X$, but far to be a necessary one. Typically, when $b(x) = b(0) + B_1 \ x -B_2 \ x^{\alpha}$, the Feller test extends  the non-explosion condition to ${B_2} > - \tfrac{\diffc^2}{2}$.

\item[(ii)]The one-sided locally Lipschitz property of $b$ stated in \eqref{eq:monotony_b}   was already a consequence  of \ref{H:pieceloclip}  with $\osLip=\Lips$.   The additional condition \eqref{eq:control_b} now imposes $\Lips \geq B_1 \vee  B_2$. At the same time,  we expect in many cases a much smaller  constant $\osLip$ in condition \eqref{eq:monotony_b}, as again when $b(x) = b(0) + B_1 \ x -B_2 \ x^{2\alpha-1}$,  we have even a  sort of   negative  $\osLip$ with (applying Lemma \ref{lem:equotient_Y_estimation}). 
\begin{equation*}
\frac{b(x) - b(y)}{x -y} \leq  B_1 - B_2  (x^{2\alpha-2} \vee  y^{2\alpha-2}). 
\end{equation*}
\end{rem}

The following proposition  regroups the properties of $X$ solution of \eqref{eq:IntroSDE}. It is derived from a similar proposition presented in \cite{BoJaMa2021}, but extends both the regularity assumptions on $b$ and the theoretical control of the exponential moments of the solution to \eqref{eq:IntroSDE}.  The proof is postponed to Appendix \ref{sec:appendix_wellposedness}. 
\begin{prop}\label{prop:XMoments}
Assume \ref{H:pieceloclip}--{\bf{(i)}} , \ref{H:polygrowth} and \ref{H:control}. Then there exists a unique (strictly) positive strong  solution $X$  to the SDE \eqref{eq:IntroSDE} with the following moment bounds:  \\
${\footnotesize\bullet}$ Negative moments of any order: for all $q>0$, there exists $C_q >0$, depending on $q$, but not on $x_0$,  such that 
\begin{equation}\label{eq:moments_neq}
\sup_{t\in[0,T]}\EE\big[X_t^{-q}\big]\leq~C_q(1+ x_0^{-q}).
\end{equation}
${\footnotesize\bullet}$ Some positive  moments:  for all exponent $p>0$ such that $\ind_{\{2\alpha-1\}}(\alphab) \ p\leq \frac{1}{2}+\tfrac{B_2}{\diffc^2}$, there exists $C_p >0$, depending on $p$, but not on $x_0$,  such that  
\begin{equation}\label{eq:moments_cont}
\sup_{t\in[0,T]}\EE\big[X_t^{2p}\big]\leq~C_p(1+ x_0^{2p}).
\end{equation}
${\footnotesize\bullet}$  Some exponential moments:   assume $(b(0),\alpha)  \notin (0,\frac{\diffc^2}{2})\times[1,\frac{3}{2}]$. 
\begin{equation}\label{eq:ExpMomentX}
\begin{aligned}
& \mbox{ When  $b(0)\neq 0$, ~for all $\nu\in \RR$,  }  
\quad \sup_{t\in[0,T]}\EE\Big[\exp\left\{\nu\int_0^t\dfrac{ds}{X_s}\right\}\Big] < +\infty.  \\
& \mbox{For all $\mu  < B_2$, ~ for all $v$  such that $3\upsilon \left(3\upsilon - 1\right)\frac{\diffc^2}2< B_2$,}  \\
&\quad  \sup_{t\in[0,T]}\EE\left[\exp\left\{\mu\int_0^tX_s^{\alphab-1}ds\right\}\right]+  \sup_{t\in[0,T]} \EE\left[\exp\left\{ -\upsilon\int_0^t\frac{b(X_s)}{X_s}ds\right\}\right] <+\infty.
\end{aligned}
\end{equation}
\end{prop}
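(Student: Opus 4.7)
The plan is to prove the four parts of the proposition (strong wellposedness with strict positivity, positive moments, negative moments, exponential moments) by Itô's formula applied to appropriate transforms of $X$, using the polynomial dominance bounds from \ref{H:polygrowth} and \ref{H:control}, localization by exit times, and Gronwall's inequality. Strong existence and uniqueness up to an explosion time follow from local Lipschitz continuity on each $(\pntDisc_k, \pntDisc_{k+1})$ in \ref{H:pieceloclip}--(i), patched using the one-sided bound \eqref{eq:monotony_b} and the continuity of $b$ at $0$; Itô on $X^2$ together with the dissipative term in \eqref{eq:control_b} controls $\sup_n \EE[X_{t\wedge\tau_n}^2]$, ruling out explosion. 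Strict positivity follows from Feller's test at $0$: since $\sigma^2(x) \leq \diffc^2 x^{2\alpha}$ with $\alpha > 1$ and $b(0) \geq 0$, the scale density $\exp(-\int_c^y 2b/\sigma^2\, dz)$ diverges fast enough at $0$ to make the origin inaccessible.

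For \eqref{eq:moments_cont}, I would apply Itô to $X^{2p}$ and use \eqref{eq:control_b} and \eqref{eq:sigma_LocallyLips} to bound the drift by
\begin{equation*}
2p\, b(0) X^{2p-1} + 2p B_1 X^{2p} - 2p B_2 X^{2p+\alphab-1} + p(2p-1)\diffc^2 X^{2p+2\alpha-2}.
\end{equation*}
When $\alphab > 2\alpha - 1$ the negative $-B_2$ term absorbs the positive diffusion term for large $X$ without any restriction on $p$; when $\alphab = 2\alpha-1$ the two highest-order terms share the exponent $2p+\alphab-1$, and their combined coefficient $p(2p-1)\diffc^2 - 2pB_2$ is nonpositive exactly when $2p \leq 1 + 2B_2/\diffc^2$. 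In either case the drift is dominated by $C(1 + X^{2p})$, so Gronwall after localization at $\tau_n = \inf\{t: X_t \geq n\}$ gives the claim.

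For \eqref{eq:moments_neq}, I would apply Itô to $X^{-q}$ with localization at $\tau_n = \inf\{t: X_t \leq 1/n\}$. Using $b(x) \geq b(0) - \Lg (x^{\alphab} \vee x)$ and $\sigma^2 \leq \diffc^2 x^{2\alpha}$, the drift is bounded above by
\begin{equation*}
-q b(0) X^{-q-1} + q\Lg (X^{-q} + X^{\alphab - q - 1}) + \tfrac{q(q+1)}{2}\diffc^2 X^{2\alpha - q - 2}.
\end{equation*}
Near $0$ the first term is nonpositive (strictly negative when $b(0) > 0$) and the diffusion contribution $X^{2\alpha-q-2}$ is tame because $\alpha > 1$ keeps its exponent above $-q$; for large $X$ the positive powers $X^{\alphab-q-1}$ and $X^{2\alpha-q-2}$ are integrable in expectation via the positive moment bounds of the previous step. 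A Gronwall argument and $n \to \infty$ then yield the uniform bound $C_q(1 + x_0^{-q})$.

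For \eqref{eq:ExpMomentX} the strategy is Feynman-Kac: for each of the three functionals $G \in \{1/x,\ x^{\alphab-1},\ -b(x)/x\}$, I would build a nonnegative sub-solution $F_\nu$ of
\begin{equation*}
F_\nu'(x) b(x) + \tfrac{1}{2} F_\nu''(x) \sigma^2(x) + \tfrac{\nu}{2} (F_\nu'(x))^2 \sigma^2(x) + G(x) \leq 0,
\end{equation*}
so that $\exp(\nu F_\nu(X_t) + \nu \int_0^t G(X_s) ds)$ is a positive supermartingale; taking expectations and using $F_\nu \geq 0$ bounds $\EE[\exp(\nu \int_0^t G(X_s) ds)]$ by $\exp(\nu F_\nu(x_0))$. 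The parameter thresholds $\mu < B_2$ and $3\upsilon(3\upsilon-1)\diffc^2/2 < B_2$ encode the solvability of this ODE with a positive sub-exponential $F_\nu$, i.e.\ the competition between the dissipation $B_2 x^{\alphab}$ and the diffusion $\diffc^2 x^{2\alpha}$ (recall $\alphab \geq 2\alpha - 1$); the exclusion $(b(0),\alpha) \notin (0,\diffc^2/2) \times [1,\tfrac{3}{2}]$ reflects a Feller-type compatibility of $F_\nu$ near $0$, where the ratio $2b(0)/\sigma^2(x)$ must dominate the local behaviour of $F_\nu$. The main obstacle is the negative-moment step, since the strong dissipation $-B_2 x^{\alphab}$ of \ref{H:control} enters only as an upper bound on $b$ and therefore does not help directly in controlling $-q X^{-q-1} b(X)$ from above; one must instead combine the weaker lower bound of \ref{H:polygrowth} with the vanishing of $\sigma^2(x)/x^2$ at $0$ and a split-regime argument using the positive moment bounds.
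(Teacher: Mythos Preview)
Your outlines for the positive and negative moment bounds are essentially what the paper does: It\^o on $X^{2p}$ (resp.\ $X^{-q}$), the polynomial bounds from \ref{H:polygrowth} and \ref{H:control}, and Gronwall after localization. One refinement you miss on the negative side: the paper tracks the growth of $C_q$ in $q$ carefully (via Young inequalities of the form $qy^{q-\theta}\le q^{q/\theta}+y^q$) and shows $C_q$ grows no faster than $\exp(cq\log q)$. This is not needed for \eqref{eq:moments_neq} itself, but it is the engine of the paper's exponential-moment proof.

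For wellposedness with discontinuous $b$, your ``patching via the one-sided bound'' is too vague. The paper removes the discontinuity by the Zvonkin-type $C^2$ transform of Leobacher--Sz\"olgyenyi, obtaining an SDE with continuous locally Lipschitz coefficients for which pathwise uniqueness and weak existence up to explosion are standard; it then runs the Feller test on the Lamperti transform $\widetilde X=X^{-2(\alpha-1)}$, not directly on $X$.

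The real divergence is in the exponential moments. The paper does \emph{not} build a Lyapunov function $F_\nu$. It applies It\^o to $\log X_t$, uses \ref{H:control} to write
\[
\mu\!\int_0^t X_s^{\alphab-1}ds \;\le\; -\tfrac{\mu}{B_2}\log\!\tfrac{X_t}{x_0}+\tfrac{\mu}{B_2}\!\int_0^t\!\tfrac{b(0)}{X_s}ds+\text{(exp.\ martingale term)}+\text{const},
\]
exponentiates, and applies H\"older with exponent $\mu/B_2$ against the exponential martingale; this is where $\mu<B_2$ enters. The remaining factor $\EE[\exp\{\theta\int_0^t X_s^{-1}ds\}]$ is expanded as a series $\sum_k \tfrac{(\theta T)^k}{k!}\sup_t\EE[X_t^{-k}]$, whose convergence is exactly governed by the $q^{cq}$ growth of $C_q$ established above; the ratio test succeeds only when $\alpha>3/2$ in the regime $0<b(0)<\diffc^2/2$, which is the origin of the exclusion $(b(0),\alpha)\notin(0,\diffc^2/2)\times[1,3/2]$. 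The bound for $\EE[\exp\{-\upsilon\int b(X)/X\}]$ is obtained from the same $\log$-identity plus a three-way H\"older with exponents $\epsilon_0=\epsilon_1=\epsilon_2=3$; the threshold $3\upsilon(3\upsilon-1)\diffc^2/2<B_2$ is precisely an artifact of that choice, not of any ODE.

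So your Feynman--Kac/supermartingale sketch is a plausible alternative strategy, but it is not carried out: you do not construct $F_\nu\ge 0$, and the specific constants in the statement (the factor $3$, the ratio test on $\alpha$) would not naturally emerge from an ODE analysis. As written, the exponential-moment part is a genuine gap rather than a different proof.
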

 
\begin{rem}\label{rem:exponentialPrototype}
\item[(i)] It is worth notice that the combination of  \ref{H:control}  with the parameters condition $\ind_{\{2\alpha-1\}}(\alphab) \ p\leq \frac{1}{2}+\tfrac{B_2}{\diffc^2}$, to get $p$-moment bounds could be written differently. 
For example, recently   \cite{mullergronbach2022existence} or  \cite{kelly2022adaptive})  consider the following combination  of $xb(x)$ and quadratic variation term: 
\begin{equation*}
2xb(x)  + (p -1)\sigma^2(x) \leq c (1 + x^2).
\end{equation*}
However, specifying the coefficients in \ref{H:control}  helps keep track of the assumptions to control the sufficient moments for any $L^p$-norm  and delineates the role of each parameter in this control. More precisely, the non-negative constants $B_1$ and $b(0)$ in the SDE \eqref{eq:IntroSDE} serve to propel the solution away from zero. Conversely, the constant $B_2$ counteracts the solution's growth induced by the diffusion term, manifesting a mean-reverting effect.
This reasoning aligns with condition (3.14) of \cite[Ch. 4]{mao2007stochastic}, and the conditions imposed on the parameters to control the moments are consistent with those of Theorem 4.1 of \cite[Ch. 4]{mao2007stochastic}.

\item[(ii)] 
Using the Lenglart Inequality (see Lemma \ref{lem:Lenglart_sharp}), it is possible to  directly derive the moment estimation  \eqref{eq:moments_cont} with the $\sup$ inside the expectation. Then the condition leading to \eqref{eq:moments_cont} is strengthened with $\ind_{\{2\alpha-1\}}(\alphab) \ \frac{p}{\epsilon} \leq \frac{1}{2}+\frac{B_2}{\diffc^2}$, for some $\epsilon\in(0,1)$. For example, considering $\epsilon = \tfrac{p}{p+1}$, the condition is replaced by  $\ind_{\{2\alpha-1\}}(\alphab) \ (p+1)\leq \frac{1}{2}+\frac{B_2}{\diffc^2}$, and  
\begin{equation*}
\EE\big[\sup_{t\in[0,T]} X_t^{2p}\big]\leq~  (p+1)^{\frac{2p+1}{(p+1)^2}} p^{-\frac{p}{(p+1)^2}} C_{p+1}(1+ x_0^{2(p+1)}). 
\end{equation*}
\end{rem}

\section{The exponential scheme, moment bounds  and local error estimations}\label{sec:scheme}

The scheme  $(\overline{X}_{t_{n}}; n\geq1)$, defined in \eqref{eq:IntroNumericScheme0} admits the following continuous version
\begin{equation}\label{eq:DecomposednumericScheme}
\overline{X}_t=b(0){\dt}+\overline{X}_{\eta(t)}\exp\Big\{\tfrac{\sigma(\overline{X}_{\eta(t)})}{\oX_{\eta(t)}}(W_t-W_{\eta(t)})+\Big(\tfrac{b\left(\overline{X}_{\eta(t)}\right)-b(0)}{\overline{X}_{\eta(t)}}-\tfrac{1}2\tfrac{\sigma^2(\overline{X}_{\eta(t)})}{\overline{X}_{\eta(t)}^2} \Big){\dt}\Big\},
\end{equation}
driven by the SDE
\begin{equation}\label{eq:ContExpscheme}
	d\overline{X}_t=\left(\overline{X}_t-b(0){\dt}\right)\Big(\tfrac{b\left(\overline{X}_{\eta(t)}\right)-b(0)}{\overline{X}_{\eta(t)}}dt+\tfrac{\sigma(\overline{X}_{\eta(t)})}{\overline{X}_{\eta(t)}}dW_t\Big)+b(0) dt.
\end{equation}
From  \eqref{eq:DecomposednumericScheme},  almost surely, 
\begin{equation}\label{eq:scheme_minoration}
0\leq \left(1-\frac{{b(0)}\delta(t)}{\oX_t}\right)\leq 1 \  \text{ or equivalently, } \  ~ 0\leq {b(0) \dt} \leq {\oX_t}.
\end{equation}
For the \expSch process $\overline{X}$, we bound the same order of  $2p$\,th-moments than for $X$, with the same sufficient condition $0 \leq  \ind_{\{2\alpha-1\}}(\alphab)\, p \leq \frac{1}{2} +\frac{B_2}{\diffc^2}$: 
\begin{lem}\label{lem:Schememoments}
Assume \ref{H:control} and \ref{H:polygrowth}-($\sigma$). For all exponents $p>0$ such that $\ind_{\{2\alpha-1\}}(\alphab)\, p \leq \frac{1}{2} +\frac{B_2}{\diffc^2}$, there exists a non-negative constant $C_p$, depending on $p$ but not on $x_0>0$, such that
\[
\sup_{t\in[0,T]}\EE\big[{\overline{X}_t^{2p}}\big]\leq C_p(1+x_0^{2p}).
\]
\end{lem}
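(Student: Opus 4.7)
The key observation I would exploit is that, on each subinterval $[t_n,t_{n+1}]$, the process $Y_t := \oX_t - b(0)\dt$ is a geometric Brownian motion with $\FF_{t_n}$-measurable drift and volatility. Indeed, from \eqref{eq:DecomposednumericScheme} one has
\begin{equation*}
Y_t = \oX_{t_n}\exp\Big\{\sigma_n\,(W_t - W_{t_n}) + \big(\mu_n - \tfrac12\sigma_n^2\big)(t - t_n)\Big\}, \quad t\in[t_n,t_{n+1}],
\end{equation*}
with $\sigma_n = \sigma(\oX_{t_n})/\oX_{t_n}$ and $\mu_n = (b(\oX_{t_n}) - b(0))/\oX_{t_n}$. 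This log-normal representation bypasses any It\^o-type fight with the nonlinearity: the conditional $2p$-moment is explicit,
\begin{equation*}
\EE\big[Y_t^{2p}\,|\,\FF_{t_n}\big] = \oX_{t_n}^{2p}\,\exp\Big\{\big(2p\mu_n + p(2p-1)\sigma_n^2\big)(t - t_n)\Big\}.
\end{equation*}

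The second step is to turn \ref{H:control} and \ref{H:polygrowth}-$(\sigma)$ into a uniform bound on the exponent. Writing $2p\mu_n + p(2p-1)\sigma_n^2 \leq 2pB_1 + F_p(\oX_{t_n})$ with
\begin{equation*}
F_p(x) := -2pB_2\,x^{\alphab-1} + p(2p-1)\diffc^2\,x^{2\alpha-2},
\end{equation*}
two regimes appear. If $\alphab > 2\alpha-1$, the polynomial $F_p$ is bounded from above on $\RR^+$ by some constant $C_p$ with no restriction on $p$. If $\alphab = 2\alpha-1$, the two powers coincide, and boundedness requires $2pB_2 \geq p(2p-1)\diffc^2$, which is precisely the parameter condition $p\leq \tfrac12 + \tfrac{B_2}{\diffc^2}$ assumed in the statement. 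In either case, the conditional moment is majorised by $\oX_{t_n}^{2p}\exp\{(2pB_1 + C_p)\Dt\}$.

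Third, from $\oX_t = b(0)\dt + Y_t$ with $b(0)\dt\geq 0$ and $Y_t\geq 0$, the elementary inequality $(a+b)^{2p} \leq 2^{(2p-1)\vee 0}(a^{2p}+b^{2p})$ yields
\begin{equation*}
\EE\big[\oX_t^{2p}\,|\,\FF_{t_n}\big] \;\leq\; K_p\,(b(0)\Dt)^{2p} + K_p\,\oX_{t_n}^{2p}\,e^{(2pB_1 + C_p)\Dt},\quad t\in[t_n,t_{n+1}].
\end{equation*}
Taking expectation and iterating the resulting linear recursion on the grid gives $\sup_n\EE[\oX_{t_n}^{2p}] \leq C(1+x_0^{2p})$, and the same inequality applied to arbitrary $t\in[t_n,t_{n+1}]$ (where $t-t_n\leq \Dt$) transfers this into the continuous-time $\sup_{t\in[0,T]}$ bound stated in the lemma.

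The only real obstacle is the balance between the restoring term $-B_2 x^{\alphab-1}$ and the extra $p(2p-1)\diffc^2 x^{2\alpha-2}$ coming from the It\^o correction inside the log-normal moment: this is precisely the content of the indicator condition in the hypothesis, so no ad hoc truncation or stopping is needed here. The argument above extends verbatim to $t\in [t_n,t)$ for $t\in(t_n,t_{n+1}]$ by replacing $\Dt$ with $t - t_n$, which keeps the constants uniform in $t\in[0,T]$.
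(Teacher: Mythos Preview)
Your approach via the explicit log-normal conditional moment of $Y_t=\oX_t-b(0)\dt$ is genuinely different from the paper's (which applies It\^o's formula to $\oX_t^{2p}$ in continuous time and closes with Gronwall), and your analysis of the exponent $2p\mu_n+p(2p-1)\sigma_n^2$ via $F_p$ is correct and recovers exactly the parameter condition on $p$.

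There is, however, a real gap in the recursion step. Your bound
\[
\EE\big[\oX_{t_{n+1}}^{2p}\big]\;\le\; K_p(b(0)\Dt)^{2p}+K_p\,e^{(2pB_1+C_p)\Dt}\,\EE\big[\oX_{t_n}^{2p}\big],
\qquad K_p=2^{(2p-1)\vee 0},
\]
when iterated over $N=T/\Dt$ steps, produces a factor $K_p^{\,N}$ in front of $x_0^{2p}$. For $p>1/2$ this is $2^{(2p-1)T/\Dt}\to\infty$ as $\Dt\to 0$, so the constant is not uniform in $\Dt$. Symmetrically, for $p<1/2$ one has $K_p=1$, but the additive term $(b(0)\Dt)^{2p}$ summed $N\sim T/\Dt$ times is of order $\Dt^{2p-1}\to\infty$. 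Only $p=1/2$ survives as written; since the lemma is used throughout the paper with constants independent of $\Dt$, this is not a cosmetic issue.

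The repair is standard but must be made explicit. For $2p\ge 1$, replace the crude splitting by the weighted convexity inequality $(a+b)^{2p}\le (1-\Dt)^{1-2p}a^{2p}+\Dt^{1-2p}b^{2p}$, which turns the multiplicative factor into $(1-\Dt)^{1-2p}=1+O(\Dt)$ and the additive term into $b(0)^{2p}\Dt$; the recursion then closes with a $\Dt$-uniform constant. For $2p<1$, Jensen's inequality reduces to the first-moment case. The paper sidesteps this entirely: working with the continuous-time It\^o decomposition of $\oX_t^{2p}$ and using $0\le 1-b(0)\dt/\oX_t\le 1$, the drift is bounded pointwise by $C\,\oX_t^{2p}$ plus a constant, and a single Gronwall application gives the $\Dt$-uniform bound directly, with no discrete iteration to manage.
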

\begin{proof}
Considering $p>0$, we apply the It\^o formula to $\oX_t^{2p}$ (with a localization argument in a compact set of $\mathbb{R}^{+*}$ omitted here or simplicity): 
\begin{equation*}
\begin{aligned}
\EE[\overline{X}_{t}^{2p}]
& = x_0^{2p}+2pb(0)\EE\big[\int_0^{t}\overline{X}_s^{2p-1}ds\big]\\
& +2p\EE\Big[\int_0^{t}\overline{X}_s^{2p-2}(\overline{X}_s-b(0){\ds})\Big\{\overline{X}_s\tfrac{b(\overline{X}_{\eta(s)})-b(0)}{\overline{X}_{\eta(s)}}+\frac{(2p-1)}2 \left(\overline{X}_s-b(0){\ds}\right)
\tfrac{\sigma^2(\overline{X}_{\eta(s)})}{\overline{X}_{\eta(s)}^2}\Big\}ds
\Big],
\end{aligned}
\end{equation*}
and thus, from \ref{H:control}, \ref{H:polygrowth}-($\sigma$) and inequality \eqref{eq:scheme_minoration}, we get
\begin{equation*}
\begin{aligned}
\EE[\overline{X}_{t}^{2p}]
\leq  & x_0^{2p}+2pb(0)\EE\big[\int_0^{t}\overline{X}_s^{2p-1}ds\big]\\
& +2p\EE\Big[\int_0^{t}\overline{X}_s^{2p-2}(\overline{X}_s-b(0){\ds})\Big\{B_1\oX_s  - B_2\oX_s\oX_{\eta(s)}^{\alphab-1}+(2p-1)\frac{\diffc^2}{2}\left(\overline{X}_s-b(0){\ds}\right)
\overline{X}_{\eta(s)}^{2(\alpha-1)}\Big\}ds
\Big]\\
\leq &  x_0^{2p}+2pb(0)\EE\big[\int_0^{t}\overline{X}_s^{2p-1}ds\big]+ 2B_1p\EE\Big[\int_0^{t}\overline{X}_s^{2p}ds\Big]\\
& +2p\EE\Big[\int_0^{t}\overline{X}_s^{2p-1}(\overline{X}_s-b(0){\ds})\Big\{- B_2\oX_{\eta(s)}^{\alphab-1}+(2p-1)\tfrac{\diffc^2}2\overline{X}_{\eta(s)}^{2(\alpha-1)}\Big\}ds
\Big].
\end{aligned}
\end{equation*}
When $\alphab = 2\alpha-1$,  the last term above is negative provided that $(2p-1)\leq 2B_2/\diffc^2$. Otherwise, if $\alphab > 2\alpha-1$, the map $z\mapsto (2p-1)\tfrac{\diffc^2}2 z^{2\alpha-2} - B_2 z^{\alphab-1}$ is bounded from above. In  the two cases, from Young inequality, there exists a constant $C$, independent of $t$, such that
\begin{equation*}
\EE[\overline{X}_{t}^{2p}]\leq C(1+x_0^{2p})+C\mbox{$\int_0^{t}$} \EE[\overline{X}_{s}^{2p}] ds.
\end{equation*}
The proof ends by applying Gronwall's inequality.
\end{proof}

\subsection{Local error and negative moments for the \expSch scheme} 

We analyse the local error of the \expSch scheme $\|\oX_t^{\gamma} - \oX_{\eta(t)}^{\gamma}\|_{L^{2p}(\Omega)}$ for some exponent $\gamma > 0$. 
Interesting values of $\gamma$ are  indeed $\gamma=1$, and  the exponents appearing in  the  It\^o formula applied to $\oX$, typically $\gamma = \alphab-1$ and $\gamma=\alpha-1$. The convergence rate of the local error for $\gamma=1$ is stated in Proposition \ref{prop:local_error}  below. It is  expected  that the local error bound requires sufficient control on positive moments of $\oX$. 

\begin{prop}\label{prop:local_error}
Assume \ref{H:polygrowth} and \ref{H:control}. For  all $p >0$, integer exponent such that 
\begin{equation}\label{eq:assumption_inlocalerror_0}
\ind_{\{2\alpha-1\}}(\alphab) \times \  p \ ( 2\alpha-1) \leq \frac12+\frac{B_2}{\diffc^2}. 
\end{equation}
Then, there exists a non-negative constant $C_p$, independent of $\Dt$, such that
\begin{equation}\label{eq:local_onehalp} \sup_{t\in[0,T]}   \|\oX_t - \oX_{\eta(t)}\|_{L^{2p}(\Omega)}  \leq C_p  \ \Dt^\frac{1}{2}.
\end{equation}
\end{prop}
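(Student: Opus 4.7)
The plan is to integrate the SDE \eqref{eq:ContExpscheme} over a single time step: since $\eta(s)=\eta(t)$ for all $s\in[\eta(t),t]$, one has
\[
\oX_t - \oX_{\eta(t)} = b(0)\,\dt + \int_{\eta(t)}^t \bigl(\oX_s - b(0)\ds\bigr)\frac{b(\oX_{\eta(t)}) - b(0)}{\oX_{\eta(t)}}\,ds + \int_{\eta(t)}^t \bigl(\oX_s - b(0)\ds\bigr)\frac{\sigma(\oX_{\eta(t)})}{\oX_{\eta(t)}}\,dW_s,
\]
and I apply Minkowski's inequality in $L^{2p}(\Omega)$ to treat the three pieces independently. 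The deterministic piece is immediately bounded by $b(0)\Dt$. For the drift piece, I use \ref{H:polygrowth}-($b$) to bound $|b(\oX_{\eta(t)})-b(0)|/\oX_{\eta(t)}\leq \Lg(1+\oX_{\eta(t)}^{\alphab-1})$, together with \eqref{eq:scheme_minoration} which gives $0\leq \oX_s-b(0)\ds\leq\oX_s$. A further use of Minkowski and Hölder's inequality with conjugate exponents $r=2\alpha-1$ and $r'=(2\alpha-1)/(2\alpha-2)$ on the product $\oX_s\,\oX_{\eta(t)}^{\alphab-1}$ reduces the integrand to moments of $\oX$ of order at most $2p(2\alpha-1)$, which are granted by Lemma \ref{lem:Schememoments} under assumption \eqref{eq:assumption_inlocalerror_0}. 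The drift piece thus contributes $O(\Dt)$.

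The main term is the stochastic integral. Applying the Burkholder--Davis--Gundy inequality followed by Jensen's inequality on the inner time integral (valid since $p\geq 1$) and using $|\sigma(x)|\leq\diffc\,x^\alpha$ from \ref{H:polygrowth}-($\sigma$), I obtain
\[
\Bigl\|\int_{\eta(t)}^t \bigl(\oX_s - b(0)\ds\bigr)\tfrac{\sigma(\oX_{\eta(t)})}{\oX_{\eta(t)}}\,dW_s\Bigr\|_{L^{2p}}^{2p} \leq C_p\,\diffc^{2p}\,\Dt^{p-1}\int_{\eta(t)}^t \EE\bigl[\oX_s^{2p}\,\oX_{\eta(t)}^{2p(\alpha-1)}\bigr]\,ds.
\]
A Hölder split with the same conjugate exponents $(2\alpha-1, (2\alpha-1)/(2\alpha-2))$ bounds the expectation by a constant times $\EE[\oX_s^{2p(2\alpha-1)}]^{1/(2\alpha-1)}\EE[\oX_{\eta(t)}^{p(2\alpha-1)}]^{(2\alpha-2)/(2\alpha-1)}$, whose factors are finite and uniformly bounded in $s,t$ by Lemma \ref{lem:Schememoments} under \eqref{eq:assumption_inlocalerror_0}. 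The outer $ds$-integral provides one more factor of $\Dt$, so after taking $(2p)$-th roots the total power becomes $\Dt^{(p-1+1)/(2p)} = \Dt^{1/2}$.

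Summing the three bounds yields $\|\oX_t-\oX_{\eta(t)}\|_{L^{2p}(\Omega)}\leq C_p\,\Dt^{1/2}$ uniformly in $t\in[0,T]$, with the stochastic integral dictating the rate. The delicate point is the borderline case $\alphab=2\alpha-1$, in which Lemma \ref{lem:Schememoments} only provides moments of $\oX$ of order $2q$ for $q\leq \tfrac12+B_2/\diffc^2$; the conjugate exponent $r=2\alpha-1$ in the Young/Hölder steps must be chosen precisely so that the maximal exponent of $\oX$ produced is $2p(2\alpha-1)$, exactly the threshold permitted by \eqref{eq:assumption_inlocalerror_0}. When $\alphab>2\alpha-1$ the indicator in \eqref{eq:assumption_inlocalerror_0} vanishes, Lemma \ref{lem:Schememoments} gives $p$-moments of every order, and the same computation applies unrestrictedly.
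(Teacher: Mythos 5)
Your argument is correct and is essentially the paper's own proof: the paper establishes Proposition \ref{prop:local_error} as the $\gamma=1$ case of Lemma \ref{lem:local_error_prev}, whose proof likewise writes the one-step increment from \eqref{eq:ContExpscheme}, splits it into the $b(0)\dt$ term, the Lebesgue integral and the stochastic integral, and bounds them using \eqref{eq:scheme_minoration}, \ref{H:polygrowth}, Jensen/BDG and the moment bounds of Lemma \ref{lem:Schememoments}, with the binding moment order $2p(2\alpha-1)$ exactly matching condition \eqref{eq:assumption_inlocalerror_0}. Your Hölder split with exponents $(2\alpha-1,\tfrac{2\alpha-1}{2\alpha-2})$ is just an explicit way of doing the mixed-time moment estimate the paper performs, so the two proofs coincide in substance.
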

In order to prove the strong convergence of the exponential scheme we use the local error approximation stated in Proposition \ref{prop:local_error}. However, we show below a more general lemma that is useful for proving the finiteness of the negative moments of the scheme: 
\begin{lem}\label{lem:local_error_prev} Assume \ref{H:polygrowth} and \ref{H:control}. 
For $\gamma >  0$, for all  $p > 0$ such that $\ind_{\{2\alpha-1\}}(\alphab)\, p \,(\gamma +2(\alpha-1) )\leq \frac12+\frac{B_2}{\diffc^2}$, there exists a non-negative constant $C_p$, independent of $\Dt$, such that
\begin{equation}\label{eq:lem_first_local_error_estimate}
\sup_{t\in[0,T]} \|\oX_t^{\gamma} - \oX_{\eta(t)}^{\gamma}\|_{L^{2p}(\Omega)} \leq C_p \left( \delta(t)^\frac{1}{2} \ \ind_{\{0\}}(b(0)) \  +  \	\delta(t)^{\frac{1}{2} \wedge \gamma} \ \ind_{(0,+\infty)}(b(0))   \right).
\end{equation}
Let $\Dt < 1$. Then, for any $0\leq \epsilon < \frac12 \wedge \gamma$ and all  $p > 0$ such that $\ind_{\{2\alpha-1\}}(\alphab)\, (p + \Dt^{2p\epsilon}) \,(\gamma +2(\alpha-1) )\leq \frac12+\frac{B_2}{\diffc^2}$, we have
\begin{equation}\label{eq:lem_first_local_error_estimate_sup_inside}
\big\|\sup_{t\in[0,T]} |\oX_t^{\gamma} - \oX_{\eta(t)}^{\gamma}|\big\|_{L^{2p}(\Omega)} \leq C_{p+\Dt^{2p\epsilon}}  \left( \delta(t)^{\frac{1}{2}-\epsilon} \ \ind_{\{0\}}(b(0)) \  + \ \delta(t)^{ (\frac{1}{2} \wedge \gamma) -\epsilon} \ \ind_{(0,+\infty)}(b(0))   \right).
\end{equation}	
\end{lem}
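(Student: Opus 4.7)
The plan is to apply It\^o's formula to $\oX_t^\gamma$ driven by the SDE \eqref{eq:ContExpscheme} (after a standard localisation in a compact subset of $(0,+\infty)$, justified by the strict positivity of the scheme), which yields, between $\eta(t)$ and $t$, the decomposition $\oX_t^\gamma - \oX_{\eta(t)}^\gamma = M_t + A_t$, where
\begin{equation*}
M_t = \gamma\int_{\eta(t)}^t \oX_s^{\gamma-1}(\oX_s - b(0)\ds)\tfrac{\sigma(\oX_{\eta(s)})}{\oX_{\eta(s)}}\,dW_s,
\end{equation*}
and $A_t$ collects the drift contributions and the It\^o correction. Using the pointwise bound $|\oX_s - b(0)\ds|\leq \oX_s$ from \eqref{eq:scheme_minoration} together with \ref{H:polygrowth}-$(\sigma)$ and \ref{H:control}, the BDG inequality combined with Jensen reduces every term to moments of polynomial monomials in $\oX_s$ and $\oX_{\eta(s)}$. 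The binding constraint originates from the It\^o-correction integrand $\oX_s^{\gamma-2}(\oX_s-b(0)\ds)^2\sigma^2(\oX_{\eta(s)})/\oX_{\eta(s)}^2\lesssim \diffc^2\,\oX_s^{\gamma}\,\oX_{\eta(s)}^{2(\alpha-1)}$, whose $L^p$ control via Lemma~\ref{lem:Schememoments} requires exactly the stated hypothesis $\ind_{\{2\alpha-1\}}(\alphab)\,p(\gamma+2(\alpha-1))\leq \tfrac12+\tfrac{B_2}{\diffc^2}$.

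When $b(0)=0$, every integrand in $A_t$ carries a positive power of $\oX_s$, so that $A_t=O(\delta(t))$ and the BDG rate $\|M_t\|_{L^{2p}}\lesssim \delta(t)^{1/2}$ dominates. When $b(0)>0$ and $\gamma\geq 1$, the extra term $\gamma b(0)\int_{\eta(t)}^t \oX_s^{\gamma-1}\,ds$ in $A_t$ still involves only non-negative powers of $\oX_s$ and contributes $O(\delta(t))$, so the $\delta(t)^{1/2}$ rate is preserved. The main obstacle is the regime $b(0)>0$ and $\gamma\in(0,1)$: the term $\gamma b(0)\int\oX_s^{\gamma-1}ds$ would require negative moments of $\oX$ which are not available under the present general hypotheses (cf.\ Proposition~\ref{prop:negative_moments}).

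To circumvent this, I split $\oX_t = b(0)\ds + Y_t$ with $Y_t := \oX_{\eta(t)}\exp\{\Phi_t\}$, where $\Phi_t$ denotes the stochastic exponent appearing in \eqref{eq:DecomposednumericScheme}. Subadditivity of $x\mapsto x^\gamma$ on $\RR^+$ for $\gamma\in(0,1]$ gives the pointwise bound
\begin{equation*}
|\oX_t^\gamma - Y_t^\gamma|\leq (b(0)\ds)^\gamma,
\end{equation*}
while the remainder $Y_t^\gamma - \oX_{\eta(t)}^\gamma = \oX_{\eta(t)}^\gamma(e^{\gamma\Phi_t}-1)$ can be treated via It\^o's formula applied to $Y_t^\gamma$, whose SDE within each grid cell coincides with \eqref{eq:ContExpscheme} after removing the $b(0)\,dt$ term and the $b(0)\ds$ subtraction. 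The argument of the previous paragraph then produces the $\delta(t)^{1/2}$ rate for this remainder, and combining the two contributions yields $\delta(t)^\gamma + \delta(t)^{1/2}\leq C\delta(t)^{1/2\wedge\gamma}$, which concludes \eqref{eq:lem_first_local_error_estimate}.

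For the supremum-inside-expectation estimate \eqref{eq:lem_first_local_error_estimate_sup_inside}, I would combine Doob's inequality for $M_t$ on each cell $[t_n,t_{n+1}]$ with the sharp Lenglart-type inequality (Lemma~\ref{lem:Lenglart_sharp}, as used in Remark~\ref{rem:exponentialPrototype}(ii)). The latter trades an $\epsilon$-loss in the H\"older exponent of $\delta(t)$ against a slight strengthening of the required moment exponent from $p$ to $p+\Dt^{2p\epsilon}$, which is precisely the form of the stated assumption; aggregating the $\sim T/\Dt$ grid cells then delivers \eqref{eq:lem_first_local_error_estimate_sup_inside}.
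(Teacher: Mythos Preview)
Your overall architecture is correct and matches the paper: It\^o's formula applied to $\oX_t^\gamma$, splitting into a martingale part (BDG $\Rightarrow \delta(t)^{1/2}$) and a drift part (Jensen $\Rightarrow \delta(t)$), with the moment condition $\ind_{\{2\alpha-1\}}(\alphab)\,p(\gamma+2(\alpha-1))\le \tfrac12+\tfrac{B_2}{\diffc^2}$ emerging from Lemma~\ref{lem:Schememoments}. (Small remark: the binding moment actually comes from the drift term $\oX_s^\gamma(\oX_{\eta(s)}^{\alphab-1}\vee 1)$ via \ref{H:polygrowth}, not from the It\^o correction, which only needs $p(\gamma+\alpha-1)$; this does not affect the statement since $\alphab\ge 2\alpha-1$.)

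The one substantive difference is your treatment of the troublesome case $b(0)>0$, $\gamma\in(0,1)$. You decompose $\oX_t=b(0)\delta(t)+Y_t$, invoke subadditivity of $x\mapsto x^\gamma$ to peel off $(b(0)\delta(t))^\gamma$, and then run a second It\^o argument on $Y_t^\gamma$. This works, but the paper's route is shorter: since \eqref{eq:scheme_minoration} already gives $\oX_s\ge b(0)\delta(s)$ pathwise, one has directly
\[
b(0)\int_{\eta(t)}^t \oX_s^{\gamma-1}\,ds \;\le\; b(0)^\gamma\int_{\eta(t)}^t \delta(s)^{\gamma-1}\,ds \;=\; \frac{b(0)^\gamma}{\gamma}\,\delta(t)^\gamma,
\]
so the negative power is neutralised by a deterministic, integrable singularity and no auxiliary process $Y$ is needed. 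Your detour buys nothing extra here, while the paper's one-line bound exploits the structural inequality \eqref{eq:scheme_minoration} more efficiently.

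For \eqref{eq:lem_first_local_error_estimate_sup_inside}, be careful with the phrase ``aggregating the $\sim T/\Dt$ grid cells'': a naive union bound or sum over cells would cost a factor $\Dt^{-1}$ and destroy the rate. The paper does \emph{not} aggregate: it simply applies Lenglart (Lemma~\ref{lem:Lenglart_sharp}) once, globally on $[0,T]$, to the process $|\oX_t^\gamma-\oX_{\eta(t)}^\gamma|^{2p'}$ with $p'=p+\Dt^{2p\epsilon}$ and exponent $r=p/p'\in(0,1)$, using the pointwise bound from \eqref{eq:lem_first_local_error_estimate} at level $p'$ as the dominating quantity. The constant $c_r=r^{-r}/(1-r)$ blows up like $\Dt^{-2p\epsilon}$ as $r\to 1$, which is exactly the $\epsilon$-loss you anticipated. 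Doob on individual cells is unnecessary.
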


\begin{proof}
Applying the It\^o formula to $\overline{X}^\gamma_t$,  we get
\begin{equation*}
\begin{aligned}
d\overline{X}^\gamma_t&= \gamma\oX^\gamma_t \left(1-\tfrac{b(0)\delta(t)}{\oX_t}\right)\left[\tfrac{b(\oX_{\eta(t)})-b(0)}{\oX_{\eta(t)}} + (\gamma -1) \frac{1}{2}\left(1-\tfrac{b(0)\delta(t)}{\oX_t}\right) \tfrac{\sigma^2(\oX_{\eta(t)})}{\oX_{\eta(t)}^2}  \right] dt +\gamma b(0)\oX_t^{\gamma-1}dt\\
&\qquad + \gamma\oX^\gamma_t \left(1-\tfrac{b(0)\delta(t)}{\oX_t}\right) \tfrac{\sigma(\oX_{\eta(t)})}{\oX_{\eta(t)}}dW_t. 
\end{aligned}
\end{equation*}
So for $t$ such that  $\dt >0$, 
\begin{equation}\label{eq:e1_e2}
\begin{aligned}
& \|\oX_t^{\gamma} - \oX_{\eta(t)}^{\gamma}\|^{2p}_{L^{2p}(\Omega)} \\
& =  \EE\left[\left\{ \gamma\int_{\eta(t)}^t\oX^\gamma_s \left(1-\tfrac{b(0)\delta(s)}{\oX_s}\right)\left[\tfrac{b(\oX_{\eta(s)})-b(0)}{\oX_{\eta(s)}}+ (\gamma -1) \tfrac{1}{2}\left(1-\tfrac{b(0)\delta(s)}{\oX_s}\right) \tfrac{\sigma^2(\oX_{\eta(s)})}{\oX_{\eta(s)}^2} 
\right]ds\right.\right.\\
&\qquad  \left.\left. +\gamma b(0)\int_{\eta(t)}^t\oX_s^{\gamma-1}ds+\gamma\int_{\eta(t)}^t\oX^\gamma_s \left(1-\tfrac{b(0)\delta(s)}{\oX_s}\right) \tfrac{\sigma(\oX_{\eta(s)})}{\oX_{\eta(s)}}dW_s\right\}^{2p}\right]\\ 
&\quad  \leq C(p)\gamma^{2p} \ \EE\left[\left\{\int_{\eta(t)}^t\oX^\gamma_s \left(1-\tfrac{b(0)\delta(s)}{\oX_s}\right)\left[\tfrac{b(\oX_{\eta(s)})-b(0)}{\oX_{\eta(s)}}+ (\gamma -1) \tfrac{1}{2}\left(1-\tfrac{b(0)\delta(s)}{\oX_s}\right) \tfrac{\sigma^2(\oX_{\eta(s)})}{\oX_{\eta(s)}^2} \right]ds + b(0)\oX_s^{\gamma-1}ds\right\}^{2p}\right]\\ 
& \qquad + C(p)\gamma^{2p} \ \EE\left[\left(\int_{\eta(t)}^t\oX^\gamma_s \left(1-\tfrac{b(0)\delta(s)}{\oX_s}\right) \tfrac{\sigma(\oX_{\eta(s)})}{\oX_{\eta(s)}}dW_s\right)^{2p}\right]\\
&\qquad  := E_1(t) +E_2(t),
\end{aligned}
\end{equation}
with $E_1$ and $E_2$ separating the Lebesgue from the  Itô integrals.  For $E_1$, using \eqref{eq:scheme_minoration} and \ref{H:polygrowth}, we get
\begin{equation*}
E_1(t) \leq C(p)\gamma^{2p} ~\EE\left[\left(\int_{\eta(t)}^t\oX^\gamma_s \left[\Lg(\oX_{\eta(s)}^{\alphab-1}\vee 1)+|\gamma-1|\tfrac{\diffc^2}2\, \oX_{\eta(s)}^{2(\alpha-1)} \right] ds + b(0)\int_{\eta(t)}^t \oX_s^{\gamma-1}ds\right)^{2p}\right]. 
\end{equation*}
	
\noindent $\bullet$ When $\gamma \geq 1$,   all the terms involved above are integrable for $B_2$ large enough,  and  by Jensen inequality,  
\begin{equation*}
\begin{aligned}
E_1(t)
&\leq C(p)\dt^{2p-1}\EE\left[\int_{\eta(t)}^t\oX^{2p\gamma}_s \left[\Lg (\oX_{\eta(s)}^{\alphab-1}\vee 1) +(\gamma-1)\tfrac{\diffc^2}2\, \oX_{\eta(s)}^{2(\alpha-1)}  \right]^{2p}ds\right]+ C(p)~\dt^{2p}\sup_{0\leq t\leq T}\EE\left[\oX_t^{2p(\gamma-1)}\right]\\
&\leq  C(p)~\dt^{2p}\left\{1+\sup_{0\leq t\leq T}\EE\left[\oX_t^{2p( \gamma + \alphab-1)}\right]+ b(0) \sup_{0\leq t\leq T}\EE\left[\oX_t^{2p(\gamma-1)}\right]\right\},
\end{aligned}
\end{equation*}
since $\alphab - 1\geq 2(\alpha-1)>0$.  Then \eqref{eq:lem_first_local_error_estimate} holds when $(B_2, \alphab)$ are such that $\sup_{0\leq t\leq T}\EE[\oX^{2 p(\gamma+\alphab-1)}_t]$ is bounded. Obviously,   when $b(0)=0$, the last  upper-bound for $E_1$ holds for all $\gamma > 0$.
\medskip
	
\noindent $\bullet$ When $0 < \gamma < 1$ and  $b(0)>0$, using \eqref{eq:scheme_minoration}
\begin{equation*}
b(0)\int_{\eta(t)}^t \oX_s^{\gamma-1}ds \leq b^\gamma(0)\int_{\eta(t)}^t (s - \eta(t)) ^{\gamma-1}ds = b^\gamma(0)\int_0^{\delta(t)} s^{\gamma-1}ds = \frac{b^\gamma(0)}{\gamma} \delta(t)^\gamma.
\end{equation*}
Then
\begin{equation*}
\begin{aligned}
E_1(t)
&\leq C(p)\dt^{2p-1}\EE\left[\int_{\eta(t)}^t\oX^{2p\gamma}_s \left[\Lg (\oX_{\eta(s)}^{\alphab-1}\vee 1) +|\gamma-1|\tfrac{\diffc^2}2\, \oX_{\eta(s)}^{2(\alpha-1)}  \right]^{2p}ds\right]+ C(p)~\dt^{2p\gamma}\\
&\leq C(p)~\dt^{2p}\left\{1+\sup_{0\leq t\leq T}\EE\left[\oX_t^{2p( \gamma + \alphab-1)}\right]\right\} +  C(p)~\dt^{2p\gamma}. 
\end{aligned}
\end{equation*}
	
For the bound of the term  $E_2(t)$ in \eqref{eq:e1_e2}, we use usual arguments from Itô calculus, with some updated  constant $C(p)$,  and using again   \eqref{eq:scheme_minoration} with \ref{H:polygrowth},  obtaining
\begin{equation*}
E_2(t)
\leq C(p)\ \gamma^{2p} \  \diffc^{2p}\ \dt^{p}\left(1+\sup_{0\leq t\leq T}\EE[\oX^{2 p(\gamma+\alpha-1)}_t]\right).
\end{equation*}
Finally, from Lemma \ref{lem:Schememoments}, a sufficient condition to control the highest moment  $\sup_{0\leq t\leq T}\EE[\oX^{2 p(\gamma+\alphab-1)}_t]$ when $\alphab=2\alpha-1$ is  that $2 p(\gamma+2(\alpha-1))\leq 1+\frac{2B_2}{\diffc^2}$, whereas  no conditions are needed in the case  $\alphab>2\alpha-1$. \medskip

To prove the estimation \eqref{eq:lem_first_local_error_estimate_sup_inside}, we consider $\epsilon>0$ and the estimation \eqref{eq:lem_first_local_error_estimate} with $p' = p + \Dt^{2 p \epsilon}$. Then, assuming the parameters satisfy $2 (p+\Dt^{2p\epsilon})(\gamma+2(\alpha-1))\leq 1+\frac{2B_2}{\diffc^2}$, we apply Lenglart's inequality in Lemma \ref{lem:Lenglart_sharp}, with  $r = \frac{p}{p+\Dt^{2p\epsilon}}$, obtaining
\begin{equation*}
\begin{aligned}
\EE\left[\sup_{t\in[0,T]} |\oX_t^{\gamma} - \oX_{\eta(t)}^{\gamma}|^{2p}\ \right] &= \EE\left[\left(\sup_{t\in[0,T]} |\oX_t^{\gamma} - \oX_{\eta(t)}^{\gamma}|^{2(p+\Dt^{2p\epsilon})}\right)^r\ \right]\\
& \leq C_{p+\Dt^{2p\epsilon}} C_r \left( \Dt^{p} \ \ind_{\{0\}}(b(0)) \  +  \	\Dt^{p \wedge 2p\gamma} \ \ind_{(0,+\infty)}(b(0))   \right),
\end{aligned}
\end{equation*}
with $C_r = o(\Dt^{-2p\epsilon})$. Therefore, 
\begin{equation*}
\big\|\sup_{t\in[0,T]} |\oX_t^{\gamma} - \oX_{\eta(t)}^{\gamma}|\big\|_{L^{2p}(\Omega)} \leq C_{p+\Dt^{2p\epsilon}} \left( \Dt^{\frac{1}{2}-\epsilon} \ \ind_{\{0\}}(b(0)) \  +  \	\Dt^{ (\frac{1}{2} \wedge \gamma) - \epsilon} \ \ind_{(0,+\infty)}(b(0))   \right),
\end{equation*}	
under assumption $2 (p+\Dt^{2p\epsilon})(\gamma+2(\alpha-1))\leq 1+\frac{2B_2}{\diffc^2}$, obtaining  \eqref{eq:lem_first_local_error_estimate_sup_inside}. 
\end{proof} 
With this lemma, we deduce some bounds on the negative moments of a stopped version of the scheme.
 More precisely, we introduce the  stopping time
\begin{equation}\label{eq:texp} 
\texp = \inf\{ s\geq 0; \oX_{\eta(s)} > \Dt^{-\frac{1}{\alphab - 1}} \},
\end{equation}
that goes to infinity, when $\Dt$ is going to zero. Also, we observe that $\eta(\texp) = \texp$.  

\begin{prop}\label{prop:negative_moments} 
Assume \ref{H:polygrowth} and   \ref{H:control}. Then,  for  $\texp$ defined in \eqref{eq:texp}, for any $\kappa >0$
\begin{equation}\label{eq:neg_moments_scheme_stopped}
\sup_{t\leq T} \EE[(\overline{X}_{t\wedge \texp}^{-\kappa}) ]  \leq \sup_{t\leq T} \EE[(\overline{X}_{\cdot} - b(0)\delta(\cdot))_{t\wedge \texp}^{-\kappa} ]   \leq C(\kappa,\alpha,T). 
\end{equation}
In addition, assume $b(0)>0$, then for any $\kappa >0$, such that,
\begin{equation*}
\ind_{\{2\alpha -1\}}(\beta) \times \Big(\ind_{(1,\frac{5}{4}]}(\alpha)+4(\alpha-1)\ind_{(\frac{5}{4},+\infty)}(\alpha)\Big) \times \left(\kappa\vee(2\alpha-2) \right) 
\leq \frac{1}{2} + \frac{B_2}{\diffc^2}, 
\end{equation*}
\begin{equation}\label{eq:neg_moments_scheme}
\sup_{t\leq T} \EE[\overline{X}^{-\kappa}_t] \  \leq C(\kappa,\alpha,T). 
\end{equation}
\end{prop}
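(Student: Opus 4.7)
For the stopped estimate (first claim), the plan is to exploit the multiplicative structure of \eqref{eq:DecomposednumericScheme}, writing
\[
(\oX_t - b(0)\dt)^{-\kappa} = \oX_{\eta(t)}^{-\kappa}\exp\Big\{-\kappa A\,(W_t - W_{\eta(t)}) - \kappa B\,\dt\Big\}
\]
with $A = \sigma(\oX_{\eta(t)})/\oX_{\eta(t)}$ and $B = (b(\oX_{\eta(t)}) - b(0))/\oX_{\eta(t)} - \sigma^2(\oX_{\eta(t)})/(2\oX_{\eta(t)}^2)$. Conditioning on $\Ff_{\eta(t)}$ integrates the Gaussian increment and yields
\[
\EE\!\left[(\oX_t - b(0)\dt)^{-\kappa}\,\Big|\,\Ff_{\eta(t)}\right] = \oX_{\eta(t)}^{-\kappa}\,\exp\!\left\{\left[\tfrac{\kappa(\kappa+1)\sigma^2(\oX_{\eta(t)})}{2\oX_{\eta(t)}^2} - \kappa\tfrac{b(\oX_{\eta(t)})-b(0)}{\oX_{\eta(t)}}\right]\dt\right\}.
\]
The point is that on the event $\{\eta(t)<\texp\}$, the a~priori bound $\oX_{\eta(t)} \leq \Dt^{-1/(\beta-1)}$ combined with $\beta-1\geq 2(\alpha-1)$ gives $(\oX_{\eta(t)}^{\beta-1}\vee 1)\,\dt \leq 1$ and $\oX_{\eta(t)}^{2(\alpha-1)}\,\dt \leq 1$; thus \ref{H:polygrowth} makes the whole exponent bounded uniformly in $\Dt$. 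I then iterate this one-step estimate along the grid to bound $\EE[\oX_{t_n\wedge\texp}^{-\kappa}]$ (using $\oX_t \geq \oX_t-b(0)\dt\geq 0$ from \eqref{eq:scheme_minoration} for the first inequality of the claim), complemented by the trivial boundary estimate $\oX_\texp^{-\kappa} \leq \Dt^{\kappa/(\beta-1)}$ on $\{t\geq \texp\}$.

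For the unstopped estimate under $b(0)>0$, the plan is to apply It\^o's formula to $\oX_t^{-\kappa}$ from the continuous-time scheme \eqref{eq:ContExpscheme}. The drift decomposes as
\[
-\kappa(\oX_t - b(0)\dt)\oX_t^{-\kappa-1}\tfrac{b(\oX_{\eta(t)}) - b(0)}{\oX_{\eta(t)}} \;-\; \kappa\,b(0)\,\oX_t^{-\kappa-1} \;+\; \tfrac{\kappa(\kappa+1)}{2}(\oX_t - b(0)\dt)^2\oX_t^{-\kappa-2}\tfrac{\sigma^2(\oX_{\eta(t)})}{\oX_{\eta(t)}^2},
\]
the middle term being strictly negative and hence favourable thanks to $b(0)>0$. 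Bounding the first and third contributions through \ref{H:polygrowth} produces an upper bound of the form $\oX_t^{-\kappa}\bigl[\kappa\Lg(\oX_{\eta(t)}^{\beta-1}\vee 1) + \tfrac{\kappa(\kappa+1)\diffc^2}{2}\oX_{\eta(t)}^{2(\alpha-1)}\bigr]$. I then apply Young's inequality to each mixed product $\oX_t^{-\kappa}\,\oX_{\eta(t)}^{\gamma}$ with $\gamma\in\{\beta-1,\,2(\alpha-1)\}$, splitting it into a piece of the form $\epsilon\,\oX_t^{-\kappa-1}$ (absorbed, for $\epsilon$ small, into the favourable $-\kappa b(0)\oX_t^{-\kappa-1}$ term) plus a piece $C_\epsilon\,\oX_{\eta(t)}^{q\gamma}$ whose expectation is controlled by Lemma~\ref{lem:Schememoments}. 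Taking expectation and applying Gr\"onwall's lemma yields the desired bound.

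The main obstacle will be the bookkeeping of the Young-inequality exponents in the second step: one must tune the dual exponent so that the order of the resulting positive moment on $\oX_{\eta(t)}$ matches exactly the largest order admitted by Lemma~\ref{lem:Schememoments} under the hypothesis $\ind_{\{2\alpha-1\}}(\beta)\,p \leq \tfrac12 + B_2/\diffc^2$. This is where the case split $\alpha\in(1,\tfrac{5}{4}]$ versus $\alpha>\tfrac{5}{4}$ and the amplification factor $4(\alpha-1)$ in the parameter condition appear, because for $\alpha>\tfrac{5}{4}$ the It\^o correction $\oX_{\eta(t)}^{2(\alpha-1)}$ dominates and forces a less economical Young split. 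A secondary technicality is the localisation away from $0$ needed to legitimately apply It\^o to the singular function $x\mapsto x^{-\kappa}$, which is handled in the standard way by a sequence of stopping times $\tau_m = \inf\{s\geq 0:\oX_s \leq 1/m\}$ followed by passing to the limit using Fatou's lemma together with the positivity $\oX_t\geq b(0)\dt$ granted by \eqref{eq:scheme_minoration}.
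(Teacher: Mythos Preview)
Your plan for the stopped estimate \eqref{eq:neg_moments_scheme_stopped} is essentially the paper's: both exploit the factorisation $\oX_t-b(0)\dt=\oX_{\eta(t)}Z(\dt)$ and bound $\EE[Z^{-\kappa}(\dt)\mid\Ff_{\eta(t)}]$ uniformly on $\{\oX_{\eta(t)}\leq\Dt^{-1/(\beta-1)}\}$ via \ref{H:polygrowth}, followed by an induction/Gr\"onwall step. Your iteration along the grid and the paper's differential-plus-Gr\"onwall formulation are equivalent.

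For the unstopped estimate \eqref{eq:neg_moments_scheme} your route is \emph{genuinely different} from the paper's. The paper does not absorb the mixed terms $\oX_t^{-\kappa}\oX_{\eta(t)}^{\gamma}$ into the favourable $-\kappa b(0)\oX_t^{-\kappa-1}$ via Young. Instead it introduces the local error as pivot,
\[
\oX_t^{-\kappa}\oX_{\eta(t)}^{\beta-1}=\oX_t^{-\kappa+\beta-1}+\oX_t^{-\kappa}\big(\oX_{\eta(t)}^{\beta-1}-\oX_t^{\beta-1}\big),
\]
uses $\oX_t\geq b(0)\dt$ to borrow $\ell$ powers of $\oX_t^{-1}$, and then H\"older-separates the second term so that the factor $(b(0)\dt)^{-\ell}$ is exactly cancelled by the local-error rate $\dt^{\frac12\wedge(\beta-1)}$ from Lemma~\ref{lem:local_error_prev} with $\ell=\tfrac12\wedge(2\alpha-2)$. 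The $\alpha\lessgtr\tfrac54$ dichotomy and the amplification factor $4(\alpha-1)$ in the parameter condition of the proposition come precisely from this choice of $\ell$ (since Lemma~\ref{lem:local_error_prev} applied with $L^{\kappa/\ell}$-norm imposes a $\tfrac{\kappa}{\ell}(2\alpha-2)$-moment), not from any Young split.

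By contrast, your absorption-into-$(-\kappa b(0)\oX_t^{-\kappa-1})$ argument is cleaner and does work, but once you commit to producing a piece $\epsilon\,\oX_t^{-\kappa-1}$ the Young dual exponent is forced to be $q'=\kappa+1$; there is no further ``tuning'' available, and the resulting positive moment is of order $(\beta-1)(\kappa+1)$. When $\beta=2\alpha-1$ this requires $(\alpha-1)(\kappa+1)\leq\tfrac12+B_2/\diffc^2$, which is a \emph{different} (and in fact weaker) sufficient condition than the one displayed in the proposition. So your closing paragraph mis-identifies the origin of the $\tfrac54$ threshold and the $4(\alpha-1)$ factor: they are artefacts of the paper's local-error route, not of your Young split. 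Your argument still proves the proposition as stated (since the paper's hypothesis implies your simpler one), but the bookkeeping you describe does not match the method you outline.
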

\begin{proof}
Applying the It\^o formula to $\overline{X}_t^{-\kappa}$, and from hypothesis \ref{H:polygrowth} and \eqref{eq:scheme_minoration}: 
\begin{equation*}
\begin{aligned}
d\overline{X}^{-\kappa}_t&= -\kappa\oX^{-\kappa}_t \left(1-\tfrac{{b(0)}\delta(t)}{\oX_t}\right)\left[ \tfrac{b(\oX_{\eta(t)})-b(0)}{\oX_{\eta(t)}}  - (\kappa+1) \frac{1}{2}\left(1-\tfrac{{b(0)}\delta(t)}{\oX_t}\right)  \tfrac{\sigma^2(\oX_{\eta(t)})}{\oX_{\eta(t)}^2}  \right]dt -\kappa {b(0)}\oX_t^{-(\kappa+1)}dt\\
&\qquad - \kappa\oX^{-\kappa}_t \left(1-\tfrac{{b(0)}\delta(t)}{\oX_t}\right)\tfrac{\sigma(\oX_{\eta(t)})}{\oX_{\eta(t)}}dW_t \\
&\leq -\kappa\oX^{-\kappa}_t \left[ -\Lg(\oX_{\eta(t)}^{\alphab-1} \vee 1)  -(\kappa+1) \tfrac{\diffc^2}{2} \oX_{\eta(t)}^{2(\alpha-1)}  \right]dt -\kappa {b(0)}\oX_t^{-(\kappa+1)}dt - \kappa\oX^{-\kappa}_t \left(1-\tfrac{{b(0)}\delta(t)}{\oX_t}\right) \tfrac{\sigma(\oX_{\eta(t)})}{\oX_{\eta(t)}}dW_t. 
\end{aligned}
\end{equation*}
Taking the expectation, (with a stopping time if needed to handle the local martingale), and keeping only the positive terms, we introduce the local error as follows
\begin{equation*}
\begin{aligned}
d\ \EE[\overline{X}^{-\kappa}_t] &\leq  \kappa \Big( (\kappa+1) \frac{\diffc^2}{2}\EE[\oX^{-\kappa}_t\oX_{\eta(t)}^{2(\alpha-1)} ] + \Lg\,\EE[\oX^{-\kappa}_t(\oX_{\eta(t)}^{\alphab-1}\vee 1) ]\Big)  \ dt \\
&\leq  \kappa \Big( (\kappa+1) \frac{\diffc^2}{2} + (\Lg+(\kappa+1) \tfrac{\diffc^2}{2})\,\EE[\oX^{-\kappa}_t(\oX_{\eta(t)}^{\alphab-1}\vee1) ]\Big)  \ dt \\
&\leq   C(\kappa) \EE[\oX^{-\kappa + \alphab-1}_t]  \ dt + C(\kappa)  \EE[\oX^{-\kappa}_t  |\oX_{\eta(t)}^{\alphab-1}  - \oX_{t}^{\alphab-1}| ]  \ dt  +C(\kappa) (1+\EE[\oX_t^{-\kappa}])dt,
\end{aligned}
\end{equation*}
with $C(\kappa) :=\kappa \big( (\kappa+1) \frac{\diffc^2}{2} + \Lg\big)$.  Assuming first $\kappa \geq \alphab-1$, Jensen inequality in the first term  gives us: 
\[\EE[\oX^{-\kappa + \alphab-1}_t]  \leq \left( \EE[ 
{\oX_t}^{-\kappa} ]\right)^{\frac{\kappa - (\alphab-1)}{\kappa}}.\]
For the second term we use H\"older inequality and \eqref{eq:scheme_minoration}:  assuming $b(0) > 0$, let $\ell<\kappa$, then for 
\begin{equation*}
\EE[\oX^{-\kappa}_t  (\oX_{\eta(t)}^{\alphab-1}  - \oX_{t}^{\alphab-1}) ]  \leq \tfrac{1}{(b(0) \delta(t))^\ell} \EE[\oX^{-\kappa+\ell}_t  |\oX_{\eta(t)}^{\alphab-1}  - \oX_{t}^{\alphab-1}|]  \leq \tfrac{1}{(b(0) \delta(t))^\ell} \EE[\oX^{-\kappa}_t] ^{\frac{\kappa - \ell}{\kappa}} \ \EE[|\oX_{\eta(t)}^{\alphab-1}  - \oX_{t}^{\alphab-1}|^{\frac{\kappa}{\ell}} ]^{\frac{\ell}{\kappa} }.
\end{equation*}
So we can use the previous local error bound, provided that we control the $\tfrac{\kappa}{\ell}4(\alpha-1)$th-moment   of $\oX$ (requiring additional parameter control when $\alphab = 2\alpha-1$, see Lemma  \ref{lem:local_error_prev}), obtaining
\[ \EE[|\oX_{\eta(t)}^{\alphab-1}  - \oX_{t}^{\alphab-1}|^{\frac{\kappa}{\ell}} ]^{\frac{\ell}{\kappa} }  \leq C(\frac{\kappa}{\ell}) \dt^{\frac{1}{2} \wedge (\alphab-1)}.\]
Choosing $\ell = \tfrac{1}{2} \wedge (\alphab-1)$, then for any $\Delta t\leq 1$,
\begin{equation*}
\EE[\oX^{-\kappa}_t  |\oX_{\eta(t)}^{\alphab-1}  - \oX_{t}^{\alphab-1}| ]  \leq  C(\kappa,\alpha) \EE[\oX^{-\kappa}_t] ^{\frac{\kappa - \ell}{\kappa}}.
\end{equation*}
Since $\frac{\kappa - (\alphab-1)}{\kappa} \vee \frac{\kappa - \ell}{\kappa} \leq 1$, we obtain the following bound
\begin{equation*}
d\ \EE[\overline{X}^{-\kappa}_t] \leq  C(\kappa,\alpha) \left[ 1 + 1\vee \EE[\overline{X}^{-\kappa}_t]  \right]dt,
\end{equation*}
from which, applying Gronwall inequality we get the desired inequality
\[\sup_{t\leq T} \EE[\overline{X}^{-\kappa}_t]\leq C(x_0,\kappa,\alpha,T). \]	
To guarantee the estimation, when $\alphab=2\alpha-1$, and according to  the choice $\ell=\tfrac{1}{2} \wedge (\alphab-1)$, we need to impose that  $\frac{\kappa}{\frac12\wedge(2\alpha-2)}(2\alpha-2)\leq \frac{B_2}{\diffc^2} + \frac12$. 

Finally, when $\kappa<\alphab-1$, the first term is a positive moment, and  using the bound on the second term, under the sufficient condition that  $\frac{1}{\frac12\wedge(2\alpha-2)}(2\alpha-2)^2\leq \frac{B_2}{\diffc^2} + \frac12$ we have the same conclusion. 
Putting together the required conditions leads to  the following 
$$\kappa\vee(2\alpha-2)\left(\ind_{(1,\frac{5}{4}]}(\alpha)+4(\alpha-1)\ind_{(\frac{5}{4},+\infty)}(\alpha)\right)\leq \frac12 + \frac{B_2}{\diffc^2}. $$

We prove now the estimation  \eqref{eq:neg_moments_scheme_stopped} for any negative moment of the stopped \expSch scheme.  Assume now, $b(0)\geq 0$. 
For $\kappa>0$, we apply Itô formula to the process 
\begin{equation*}
d (\overline{X}_t - b(0) \dt) =\left(\overline{X}_t-b(0){\dt}\right)\Big(\tfrac{b\left(\overline{X}_{\eta(t)}\right)-b(0)}{\overline{X}_{\eta(t)}}dt+\tfrac{\sigma(\oX_{\eta(t)})}{\oX_{\eta(t)}}dW_t\Big).
\end{equation*}
\begin{equation*}
d (\overline{X}_t - b(0) \dt)^{-\kappa} = -\kappa \left(\overline{X}_t-b(0){\dt}\right)^{-\kappa}\Big(\tfrac{b\left(\overline{X}_{\eta(t)}\right)-b(0)}{\overline{X}_{\eta(t)}}\ dt+\tfrac{\sigma(\oX_{\eta(t)})}{\oX_{\eta(t)}} \ dW_t  - \tfrac12(\kappa +1)\tfrac{\sigma^2(\oX_{\eta(t)})}{\oX_{\eta(t)}^2} \ dt \Big).
\end{equation*}
We rewrite $\overline{X}_t - b(0) \dt =  \overline{X}_{\eta(t)} \ Z(\dt)$, denoting 
\begin{equation*}
Z(\dt) :=  \exp\Big\{\tfrac{\sigma(\overline{X}_{\eta(t)})}{\oX_{\eta(t)}}(W_t-W_{\eta(t)}) + \left(\tfrac{b(\oX_{\eta(t)}) - b(0)}{\oX_{\eta(t)}}-\tfrac{1}2 \tfrac{\sigma^2(\overline{X}_{\eta(t)})}{\oX_{\eta(t)}^2}\right){\dt}\Big\}.
\end{equation*}
From \ref{H:polygrowth}, observe that, for any $\Dt\leq 1$:
\begin{equation*}
\begin{aligned}
\EE\left[ \ind_{\{  \oX_{\eta(t)} \leq \Dt^{-\frac{1}{\alphab-1}} \}}   Z^{-\kappa}(\dt) \Big|\mathcal{F}_{\eta(t)} \right] 
&\leq 
\ind_{\{  \oX_{\eta(t)} \leq \Dt^{-\frac{1}{\alphab-1}} \}} \exp\{L_G(\Dt^{-1}\vee1)\Dt\kappa\}~  \exp\Big\{\tfrac{\diffc^2}2 \kappa(\kappa+1)\overline{X}_{\eta(t)}^{2(\alpha-1)}{\dt}\Big\}\\
&\leq \ind_{\{  \oX_{\eta(t)} \leq \Dt^{-\frac{1}{\alphab-1}} \}}~ \exp\Big\{L_G\kappa + \tfrac{\diffc^2}2 \kappa(\kappa+1)\Big\}. 
\end{aligned}
\end{equation*}
For $\texp = \inf\{ s\geq 0; \oX_{\eta{(s)}} > \Dt^{-\frac{1}{\alphab-1}} \}$, we  consider the stopped process $(\overline{X}^{-\kappa}_{\cdot \wedge \texp} - b(0)\delta(\cdot\wedge \texp))$.  From  \ref{H:polygrowth} and the previous estimation we obtain
\begin{equation*}
\begin{aligned}
& d\ \EE[(\overline{X}_\cdot - b(0)\delta(\cdot))^{-\kappa}_{t \wedge \texp} ]\\
& \leq \kappa \EE\left[\ind_{\{  \oX_{\eta(t)} \leq \Dt^{-\frac{1}{\alphab-1}} \}} Z^{-\kappa}(\dt)\oX_{\eta(t)}^{-\kappa}\Big(L_G\left(\overline{X}_{\eta(t)}^{\alphab-1}\vee1\right)  + \tfrac{\diffc^2}2(\kappa +1)\oX_{\eta(t)}^{2(\alpha-1)} \Big)\right]\ dt\\
& \quad = \kappa \EE\left[\EE\left[\ind_{\{  \oX_{\eta(t)} \leq \Dt^{-\frac{1}{\alphab-1}} \}} Z^{-\kappa}(\dt)\Big|\Ff_{\eta(t)}\right]\oX_{\eta(t)}^{-\kappa}\Big(L_G\left(\overline{X}_{\eta(t)}^{\alphab-1}\vee1\right)  + \tfrac{\diffc^2}2(\kappa +1)\oX_{\eta(t)}^{2(\alpha-1)} \Big)\right]\ dt\\
& \qquad \leq C \EE\left[\ind_{\{  \oX_{\eta(t)} \leq \Dt^{-\frac{1}{\alphab-1}} \}}\big( \oX_{\eta(t)}^{-\kappa}+ \oX_{\eta(t)}^{-\kappa + \alphab-1} + \oX_{\eta(t)}^{-\kappa + 2(\alpha-1)}\big)\right]\ dt. 
\end{aligned}
\end{equation*}
Thus, for any $\kappa > \alphab-1 \geq  2 (\alpha-1)$, (otherwise we can use H\"older inequality):
\begin{equation*}
\begin{aligned}
\sup_{0\leq s\leq t}\EE[(\overline{X}_\cdot - b(0) \delta(\cdot))^{-\kappa}_{s \wedge \texp} ]
&\leq  C\ \left(x_0^{-\kappa} + \int_0^t\sup_{0\leq r\leq s} \EE\left[\overline{X}^{-\kappa}_{r \wedge \texp}\right] ds\right)\\
& \leq  C\ \left(x_0^{-\kappa} + \int_0^t \sup_{0\leq r\leq s}\EE\left[(\overline{X}_\cdot - b(0) \delta(\cdot))^{-\kappa}_{r \wedge \texp}\right] ds\right).
\end{aligned}
\end{equation*}
From Gronwall's inequality, we conclude that there exists a positive constant $C$ independent on $\Dt$, such that
\begin{equation*}
\sup_{t\in[0,T]}\EE[(\overline{X}_\cdot - b(0) \delta(\cdot))^{-\kappa}_{t \wedge \texp} ]
\leq C\ (x_0^{-\kappa} + 1).
\end{equation*}
\end{proof}

\subsection{Exponential moment for the exponential scheme}\label{sec:exponential_moments_scheme}

\begin{prop}\label{prop:schem_ExpoMoment}
Assume \ref{H:polygrowth} and  \ref{H:control}.  Then,  for  $\texp$ defined in \eqref{eq:texp}, for any $\mu < B_2$, 
\begin{equation}\label{eq:Exp_moments_scheme_stopped}
 \EE \exp\left\{ \mu \int_0^{t\wedge \texp}   \oX_{\eta(s)}^{\alphab -1} ds  \right\}\  \leq C(\mu,\alphab,T).
\end{equation}
\end{prop}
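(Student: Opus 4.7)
Plan: The proof will follow the blueprint of the continuous-time exponential-moment bound \eqref{eq:ExpMomentX} in Proposition \ref{prop:XMoments}, transplanted to the semi-explicit scheme. The key observation is that the auxiliary process $Y_t := \oX_t - b(0)\dt$, which is non-negative by \eqref{eq:scheme_minoration}, obeys a piecewise-geometric SDE
\begin{equation*}
dY_t = Y_t\Big(\tfrac{b(\oX_{\eta(t)})-b(0)}{\oX_{\eta(t)}}\,dt + \tfrac{\sigma(\oX_{\eta(t)})}{\oX_{\eta(t)}}\,dW_t\Big),\qquad t\in(t_n,t_{n+1}),
\end{equation*}
with upward jumps of size $b(0)\Dt$ at each grid point $t_n$. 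Applying It\^o to $\log Y_t$ piece by piece, telescoping the jumps, and injecting the upper bound $b(x)-b(0)\leq B_1 x - B_2 x^{\alphab}$ from \ref{H:control}, I would derive the pathwise inequality
\begin{equation*}
B_2\!\int_0^{t\wedge\texp}\!\!\oX_{\eta(s)}^{\alphab-1}\,ds \leq \log\tfrac{x_0}{Y_{t\wedge\texp}} + J_{t\wedge\texp} + B_1 T - \tfrac{1}{2}\langle M\rangle_{t\wedge\texp} + M_{t\wedge\texp},
\end{equation*}
where $M_t := \int_0^t \tfrac{\sigma(\oX_{\eta(s)})}{\oX_{\eta(s)}}\,dW_s$ and $J_t := \sum_{0<t_n\leq t}\log\tfrac{\oX_{t_n}}{\oX_{t_n}-b(0)\Dt}$ collects the log of the upward jumps.

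Fix $\mu < B_2$ and set $\nu := \mu/B_2 \in (0,1)$. Multiplying the inequality by $\nu$ and using $\nu M - \tfrac{\nu}{2}\langle M\rangle \leq \nu M - \tfrac{\nu^2}{2}\langle M\rangle$ (which holds since $\nu(1-\nu) \geq 0$), exponentiation yields
\begin{equation*}
\exp\Big\{\mu\!\int_0^{t\wedge\texp}\!\!\oX_{\eta(s)}^{\alphab-1}\,ds\Big\} \leq e^{\nu B_1 T}\,x_0^{\nu}\, Y_{t\wedge\texp}^{-\nu}\, e^{\nu J_{t\wedge\texp}}\, \exp\!\Big\{\nu M_{t\wedge\texp} - \tfrac{\nu^2}{2}\langle M\rangle_{t\wedge\texp}\Big\}.
\end{equation*}
Taking expectation and applying Cauchy--Schwarz (twice if needed), the proof reduces to three $\Dt$-uniform estimates: (a) a negative-moment bound $\EE[Y_{t\wedge\texp}^{-p}]\leq C_p$ for any $p>0$, supplied directly by the stopped estimate \eqref{eq:neg_moments_scheme_stopped} of Proposition \ref{prop:negative_moments}; (b) the exponential-martingale bound $\EE\big[\exp(qM_{t\wedge\texp} - \tfrac{q^2}{2}\langle M\rangle_{t\wedge\texp})\big] \leq 1$, which holds by supermartingality and is in fact an equality because the stopping at $\texp$ makes $\langle M\rangle_{t\wedge\texp}$ almost-surely bounded (so Novikov applies); and (c) an exponential moment for the jump accumulation $J_{t\wedge\texp}$.

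The genuine bottleneck is (c). On the event $\{b(0)\Dt/\oX_{t_n} \leq 1/2\}$, the elementary estimate $-\log(1-x) \leq 2x$ collapses the sum to a Riemann-type quantity $\leq 2b(0)\Dt\sum_{t_n<t\wedge\texp} Y_{t_n^-}^{-1}$, whose exponential moment can itself be handled by iterating the Cauchy--Schwarz/negative-moment argument with \eqref{eq:neg_moments_scheme_stopped}. The difficulty lies on the complementary event $\{\oX_{t_n} < 2b(0)\Dt\}$, where the logarithm may blow up; here I plan a Chebyshev argument exploiting the uniform-in-$n$ negative moments $\EE[Y_{t_n^-}^{-q}]\leq C_q$ (extracted from the proof of \eqref{eq:neg_moments_scheme_stopped} applied at each step) to show that the event is sufficiently rare that, despite $\lfloor T/\Dt\rfloor$ grid points, the total contribution remains $O(1)$ uniformly in $\Dt$. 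Balancing these two regimes is precisely what matches the advertised threshold $\mu < B_2$, and the stopping time $\texp$ enters only to guarantee the a.s.\ boundedness of $\langle M\rangle$ needed in (b) and the stopped negative moments needed in (a) and (c).
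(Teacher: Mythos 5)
Your steps (1)--(4) retrace the paper's own argument: the paper likewise applies It\^o's formula with the logarithm to $\oX_t-b(0)\dt$, inserts \ref{H:control}, rearranges to isolate $\mu\int_0^{t\wedge\texp}\oX_{\eta(s)}^{\alphab-1}ds$, and then applies H\"older with exponents $B_2/\mu$ and $B_2/(B_2-\mu)$ so that the martingale factor becomes a Dol\'eans exponential of expectation at most one, the remaining factor being exactly the stopped negative moment furnished by \eqref{eq:neg_moments_scheme_stopped} of Proposition \ref{prop:negative_moments}. The one point where you depart from the paper is your jump term $J$: the paper works from the displayed intra-interval dynamics of $\oX_t-b(0)\dt$ and carries no jump correction at all, so the entire weight of your divergence falls on your step (c) -- and that is precisely where the proposal stops being a proof.

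The gap is that the exponential integrability of $J_{t\wedge\texp}$ is never established, and the plan you sketch does not close with the tools you invoke. On the ``good'' regime your estimate reduces (c) to bounding $\EE\exp\{c\,b(0)\Dt\sum_{t_n\le t\wedge\texp} Y_{t_n^-}^{-1}\}$, i.e.\ a discrete analogue of $\EE\exp\{\nu\int_0^t \oX_s^{-1}ds\}$; no such bound for the scheme exists anywhere in the paper, and its continuous counterpart in \eqref{eq:ExpMomentX} is obtained only under the extra restriction $(b(0),\alpha)\notin(0,\tfrac{\diffc^2}{2})\times[1,\tfrac32]$ and after a dedicated refinement in the Appendix showing that the negative-moment constants of $X$ grow no faster than $e^{q\log q}$. ``Iterating Cauchy--Schwarz'' is not a substitute: each iteration doubles the exponent, and the stopped negative-moment constants produced by the proof of \eqref{eq:neg_moments_scheme_stopped} grow at least like $e^{c\kappa^2}$ in the order $\kappa$ (through the conditional factor $\exp\{L_G\kappa+\tfrac{\diffc^2}{2}\kappa(\kappa+1)\}$ fed into Gronwall), so neither the iteration nor a Taylor-series argument converges. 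On the ``bad'' regime $\{\oX_{t_n}<2b(0)\Dt\}$, Chebyshev plus the uniform negative moments controls the probability of each event, but what is needed is an exponential moment of a sum over up to $\lfloor T/\Dt\rfloor$ grid points of terms $\log(\oX_{t_n}/Y_{t_n^-})$ that are unbounded as $Y_{t_n^-}\downarrow 0$; rarity of the events does not by itself dominate an exponential of their accumulated (unbounded) contributions uniformly in $\Dt$, and no conditional supermartingale mechanism is supplied. Since the statement you are proving assumes only \ref{H:polygrowth} and \ref{H:control}, any route through exponential moments of $\Dt\sum_n \oX_{t_n}^{-1}$ is at best incomplete and plausibly requires hypotheses the proposition does not grant; your bookkeeping of the upward jumps of size $b(0)\Dt$ is accurate as such, but to obtain the stated bound you must either prove the missing exponential estimate for $J_{t\wedge\texp}$ or follow the paper and argue without introducing it.
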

\begin{proof}
Applying the Itô formula with $\log$ function to 
\begin{equation}
	d(\overline{X}_t - b(0)\dt) =\left(\overline{X}_t-b(0){\dt}\right)\Big(\tfrac{b\left(\overline{X}_{\eta(t)}\right)-b(0)}{\overline{X}_{\eta(t)}}dt+\tfrac{\sigma(\oX_{\eta(t)})}{\oX_{\eta(t)}}dW_t\Big), 
\end{equation}
and using \ref{H:control}, we get
\begin{equation*}
\log(\oX_t - b(0){\dt}) 
\leq \log(x_0) + B_1 T -B_2 \int_0^t \overline{X}_{\eta(s)}^{\alphab-1}ds+\int_0^t\tfrac{\sigma(\oX_{\eta(s)})}{\oX_{\eta(s)}}dW_s-\tfrac{1}2\int_0^t\tfrac{\sigma^2(\oX_{\eta(s)})}{\oX_{\eta(s)}^2}ds. 
\end{equation*}
Then, for any $\mu>0$, 
\begin{equation*} 
\exp\left\{\mu \int_0^t \overline{X}_{\eta(s)}^{\alphab-1}ds\right\}\leq C (\oX_t-b(0)\dt)^{-\tfrac{\mu}{B_2}} \exp\left\{\tfrac{\mu}{B_2}\int_0^t\tfrac{\sigma(\oX_{\eta(s)})}{\oX_{\eta(s)}}dW_s-\tfrac{\mu}{B_2}\tfrac{1}2\int_0^t\tfrac{\sigma^2(\oX_{\eta(s)})}{\oX_{\eta(s)}^2}ds\right\},
\end{equation*}
where $C = x_0^{\tfrac{\mu}{B_2}}\exp\{\tfrac{\mu}{B_2}B_1 T\}$. Stopping the time at $\texp$ defined in \eqref{eq:texp}, taking expectations and applying H\"older inequality, we get, for all $0<\mu<B_2$,
\begin{equation*}
\begin{aligned}
& \EE\left[\exp\left\{\mu \int_0^{t\wedge\texp} \overline{X}_{\eta(s)}^{\alphab-1}ds\right\}\right] \\
&\leq C~\EE\left[(\oX_{t\wedge\texp} - b(0)\delta({t\wedge\texp}) )^{-\tfrac{\mu}{B_2}}\exp\left\{\tfrac{\mu}{B_2}\int_0^{t\wedge\texp}\tfrac{\sigma(\oX_{\eta(s)})}{\oX_{\eta(s)}}dW_s-\tfrac{\mu}{B_2}\tfrac{1}2\int_0^{t\wedge\texp}\tfrac{\sigma^2(\oX_{\eta(s)})}{\oX_{\eta(s)}^2}ds\right\}\right]\\
& \leq C~\EE^{\tfrac{(B_2-\mu)}{B_2}}\left[(\oX_{t\wedge\texp}-b(0)\delta({t\wedge\texp}))^{-\tfrac{\mu}{B_2-\mu}}\right].
\end{aligned}
\end{equation*}
We end the proof by applying the Proposition \ref{prop:negative_moments}. 
\end{proof}

\section{The strong rate of convergence  for the exponential scheme}\label{sec:strong_error_analysis}
We introduce the error  process  $Y_t = X_t-\oX_t$, with dynamics
\begin{equation}\label{eq:sde_Y}
\begin{aligned}
	dY_t &=  Y_t \left( \frac{b(X_t)-b(0)}{X_t} dt +   \frac{\sigma(X_{t})}{X_{t}} dW_t\right) + b(0)\dt \frac{\sigma(\oX_{\eta(t)})}{\oX_{\eta(t)}}dW_t + b(0)\dt \left[\frac{b(\oX_{\eta(t)})-b(0)}{\oX_{\eta(t)}}\right]dt\\
	& \quad + \oX_t  \left( \Big[\frac{b(X_t)-b(0)}{X_t} - \frac{b(\oX_{\eta(t)})-b(0)}{\oX_{\eta(t)}}\Big]dt 
	+ \Big[\frac{\sigma(X_t)}{X_t}-\frac{\sigma(\oX_{\eta(t)})}{\oX_{\eta(t)}}\Big] dW_t  \right).
\end{aligned}
\end{equation}
This error process involves the diffusion coefficient  $ \oX_t [\tfrac{\sigma(X_t)}{X_t}-\tfrac{\sigma(\oX_{\eta(t)})}{\oX_{\eta(t)}}]$ which requires further regularity from $\sigma$. For simplicity, we assume the differentiability of $\sigma$ with a compatible polynomial growth that reinforces  \ref{H:polygrowth}-$(\sigma)$:
\begin{hypo}{\hspace{-0.12cm}\Blue{\bf  \textrm Control on $\sigma'$. (\ref{H:diffusion_deriv}).}}
\makeatletter\def\@currentlabel{ {\bf\textrm H}$_{\mbox{\scriptsize\bf\textrm{Control\_$\sigma'$}}}$}\makeatother
\label{H:diffusion_deriv}
In addition to Hypothesis \ref{H:polygrowth}-$(\sigma)$, the diffusion function $\sigma$ is differentiable in $\RR^+$ and there exists some non-negative constant $\diffcd$ such that, for all $x\in\RR^+$, 
\begin{equation}\label{eq:sigma_prime_LocallyLips}
|\sigma'(x)| \leq \alpha \ \diffcd  \  x^{\alpha-1}.
\end{equation}
\end{hypo}
We are now in a position to state our first  convergence results. 
\begin{theorem}\label{thm:strong_rate_stopped}
Assume \ref{H:pieceloclip}, \ref{H:polygrowth}, \ref{H:control} and \ref{H:diffusion_deriv}. Then, for all $0 < \varepsilon < \frac12$, 
there exists $0 < \Delta(\varepsilon) \leq 1$,    such that for all $\Delta t \leq \Delta(\varepsilon)$,  for   $\texp$ defined in \eqref{eq:texp},   for any integer $p\geq1$  satisfying 
\begin{align}
\ind_{\{2\alpha-1\}}(\alphab) \  2(p+1 + \Dt^{2\varepsilon (p+1)}) \ (2\alpha-1) \leq \tfrac12+\tfrac{B_2}{\diffc^2}, & \label{eq:condition_param_main_thm_1_1} \\
2 \ (p+1)^2 \left( \osLip  + 2(2p-1)(6 \diffc^2 + 4(\alpha \diffcd)^2\right)   < B_2,& \quad\mbox{when }(b(0),\alpha)\notin (0,\tfrac{\diffc^2}2)\times[1,\tfrac32],\label{eq:condition_param_main_thm_1_3}
\end{align}
there exists  some positive constant $C(p)$, independent of $\Dt$ and $\varepsilon$,   such that
\begin{equation*}
\sup_{0\leq t\leq T}  \big\| X_{t\wedge\texp} - \oX_{t\wedge\texp}\big\|_{L^{2p}(\Omega)}  +  \big\| \sup_{0\leq t\leq T} |X_{t\wedge\texp} - \oX_{t\wedge\texp}|\big\|_{L^{p}(\Omega)} 
\leq C(p) \,\Dt^{\frac{1}{2}  - \varepsilon}.
\end{equation*}
Furthermore, 
when $b$ is continuous on $\RR^+$,  the convergence above takes place  for $\Dt \leq 1$, ($\varepsilon=0$), and  
\begin{equation*}
\sup_{0\leq t\leq T}  \big\| X_{t\wedge\texp} - \oX_{t\wedge\texp}\big\|_{L^{2p}(\Omega)}  +  \big\| \sup_{0\leq t\leq T} |X_{t\wedge\texp} - \oX_{t\wedge\texp}|\big\|_{L^{p}(\Omega)} \leq C(p) \, \Dt^{\frac{1}{2}}.
\end{equation*}
with conditions  \eqref{eq:condition_param_main_thm_1_1} and \eqref{eq:condition_param_main_thm_1_3} enlarged to  
\begin{align}\label{eq:condition_param_thm_main_b_continuous_1}
\ind_{\{2\alpha-1\}}(\alphab)\ 2 (p+1) \ (2\alpha -1) \leq \frac12+\frac{B_2}{\diffc^2},&\\
\label{eq:condition_param_thm_main_b_continuous_2}
2 \ p \ (p+1) \left( \osLip  + 2(2p-1)(6 \diffc^2 + 4(\alpha \diffcd)^2\right)    < B_2,&  \quad\mbox{when }(b(0),\alpha)\notin (0,\tfrac{\diffc^2}2)\times[1,\tfrac32].
\end{align}
\end{theorem}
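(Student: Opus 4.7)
My plan is to study $Y_t = X_t - \oX_t$ by applying the It\^o formula to $Y_t^{2p}$ (taking $2p$ even so the power map is $C^2$) using the SDE \eqref{eq:sde_Y}, and to split each coefficient difference into a ``continuous-time'' piece driven by $X_t - \oX_t$ and a ``local error'' piece driven by $\oX_t - \oX_{\eta(t)}$. For the drift I would write $b(X_t) - b(\oX_{\eta(t)}) = \{b(X_t) - b(\oX_t)\} + \{b(\oX_t) - b(\oX_{\eta(t)})\}$, and likewise for the $\sigma$-quotient. The first bracket is controlled away from the jump points of $b$ by the one-sided locally Lipschitz bound \eqref{eq:monotony_b}, producing a quadratic term of the form $\osLip\,(1 + X^{\alphab-1}\vee\oX^{\alphab-1})\,Y_t^{2p}$. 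The second bracket is handled in $L^{2p}$ through the local error estimate of Proposition~\ref{prop:local_error} and Lemma~\ref{lem:local_error_prev} (invoked at the exponents $\gamma=1,\alpha-1,\alphab-1$ that naturally appear in the It\^o drift and in the quadratic variation). The ratio-type diffusion difference $\sigma(X_t)/X_t - \sigma(\oX_{\eta(t)})/\oX_{\eta(t)}$ is then bounded using \ref{H:diffusion_deriv}, with the negative moments of $X$ (Proposition~\ref{prop:XMoments}) and of the stopped scheme (Proposition~\ref{prop:negative_moments}) absorbing the resulting $1/X$ and $1/\oX$ factors; the $b(0)\delta(t)$ terms are small via \eqref{eq:scheme_minoration}.

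The second step is to handle the jump points $\pntDisc_k$ of $b$, where \eqref{eq:monotony_b} fails. Around each $\pntDisc_k$ I would isolate a neighbourhood of width $\pntDisc_m/4$ (to be taken of order a small negative power of $\Dt$). Outside these neighbourhoods, \ref{H:pieceloclip}-\textbf{(i)} gives the locally Lipschitz bound with constant $\Lips$; inside them, $b$ is bounded and its contribution is controlled by the time $\oX$ spends near $\pntDisc_k$. The crucial technical step, which I would state as Lemma~\ref{lem:disc_contrib}, adapts the occupation-time estimate of \cite{bernardin2009passage}: using the Tanaka/occupation-time formula for $\oX_{\cdot\wedge\texp}$ together with Bernstein's inequality for the martingale part, one shows that the integral $\int_0^{T\wedge\texp}\ind_{\{|\oX_s - \pntDisc_k|\leq\hhh\}}ds$ is bounded in $L^p$ by $C\,\hhh^{1-\varepsilon'}$; optimising $\hhh$ in terms of $\Dt$ produces precisely the $\Dt^{1/2-\varepsilon}$ penalty and explains why one only obtains $\varepsilon = 0$ (with the cleaner conditions \eqref{eq:condition_param_thm_main_b_continuous_1}-\eqref{eq:condition_param_thm_main_b_continuous_2}) when $b$ is continuous.

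After taking expectations and collecting terms, one arrives at an inequality of the form
\begin{equation*}
\EE\bigl[Y_{t\wedge\texp}^{2p}\bigr] \leq C\,\Dt^{p(1-2\varepsilon)} + C\,\EE\Bigl[\int_0^{t\wedge\texp} Y_s^{2p}\,\Phi_s\,ds\Bigr],
\qquad
\Phi_s = C_1 + C_2\,(X_s^{\alphab-1}\vee \oX_s^{\alphab-1}) + C_3\,X_s^{-1},
\end{equation*}
which is non-linear in $Y$ because of the random weight $\Phi_s$. To bypass the stochastic Gronwall difficulty I would apply the change-of-time device of \cite{BBD08}: a pathwise Gronwall gives
$Y_{t\wedge\texp}^{2p} \leq C\,A_T\,\exp\bigl(C\int_0^{t\wedge\texp}\Phi_s\,ds\bigr)$
with $A_T$ controlled in every $L^q$ by the local-error and moment results of Section~\ref{sec:scheme}; a H\"older split with suitable conjugate exponents then reduces the estimate to \textit{(a)} moment bounds from Lemma~\ref{lem:Schememoments} and Proposition~\ref{prop:XMoments}, and \textit{(b)} exponential moments of $\int\oX_{\eta}^{\alphab-1}ds$ (Proposition~\ref{prop:schem_ExpoMoment}), $\int X^{\alphab-1}ds$ and $\int X^{-1}ds$ (the bounds \eqref{eq:ExpMomentX}). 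The quadratic coefficient in \eqref{eq:condition_param_main_thm_1_3} is exactly what one must impose so that, after the H\"older split, the exponent of the required exponential moment stays within the ranges for which \eqref{eq:ExpMomentX} and Proposition~\ref{prop:schem_ExpoMoment} apply.

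Finally, to upgrade $\sup_{t\leq T}\EE[\cdot]$ into $\EE[\sup_{t\leq T}\cdot]$, I would apply the sharp Lenglart-type inequality of Lemma~\ref{lem:Lenglart_sharp} at the slightly inflated exponent $p+1+\Dt^{2\varepsilon(p+1)}$; the inflation term is precisely the source of the $\Dt^{2\varepsilon(p+1)}$ appearing in \eqref{eq:condition_param_main_thm_1_1}, and it costs a factor $o(\Dt^{-\varepsilon})$ which is absorbed into $\Dt^{1/2-\varepsilon}$ for $\Dt \leq \Delta(\varepsilon)$. The main obstacle is the simultaneous juggling of three constraints: \textit{(i)} the superlinear growth forces the nonlinear Gronwall and hence the time-change, \textit{(ii)} the exponential moments for $\oX^{\alphab-1}$ are only available on the stopped process (Proposition~\ref{prop:schem_ExpoMoment}), which is exactly why the theorem must be stated up to $\texp$, and \textit{(iii)} the jump points of $b$ must be handled with an occupation-time argument that has to co-exist with the time-change -- concretely, the Bernstein inequality of Lemma~\ref{lem:disc_contrib} must be run on the \emph{stopped} scheme so that its polynomial moments are finite with uniform-in-$\Dt$ constants.
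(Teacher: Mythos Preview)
Your proposal follows the paper's approach closely and identifies all the key ingredients: the It\^o decomposition of $Y_t^{2p}$ into the terms $T_1,\ldots,T_5$, the pivot $b(X_t)-b(\oX_{\eta(t)}) = \{b(X_t)-b(\oX_t)\} + \{b(\oX_t)-b(\oX_{\eta(t)})\}$ with the first piece handled by the one-sided Lipschitz bound and the second by Lemma~\ref{lem:disc_contrib}, the occupation-time/Bernstein treatment of the discontinuities, the change-of-time device of \cite{BBD08} for the Gronwall step, and Lenglart's inequality at the end.

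Your description of the change-of-time step, however, is muddled and as written would not work. After taking expectations you arrive at $\EE[Y_{t\wedge\texp}^{2p}] \leq C\Dt^{p(1-2\varepsilon)} + \EE[\int_0^{t\wedge\texp} Y_s^{2p}\Phi_s\,ds]$; you then cannot invoke a ``pathwise Gronwall'' to recover a pathwise bound $Y^{2p} \leq A_T\exp(\int\Phi)$ --- the martingale has already been removed by the expectation, and the inequality is purely between deterministic functions of $t$. What the paper (and \cite{BBD08}) actually does is: set $H(t)=\int_0^t\Phi_s\,ds$, define $\tau_\lambda=\inf\{t:H(t)\geq\lambda\}\wedge T\wedge\texp$, change variables $u=H(s)$ to obtain $\EE[Y_{\tau_\lambda}^{2p}]\leq C\Dt^{p(1-2\varepsilon)}+\int_0^\lambda\EE[Y_{\tau_u}^{2p}]\,du$, apply ordinary Gronwall in $\lambda$ to get $\EE[Y_{\tau_\lambda}^{2p}]\leq C\Dt^{p(1-2\varepsilon)}e^\lambda$, and finally write
\[
\EE\Big[\int_0^{H(T\wedge\texp)}Y_{\tau_u}^{2p}\,du\Big]\leq\int_0^\infty\PP\big(H(T\wedge\texp)\geq u\big)^{\frac{1}{p+1}}\,\EE\big[Y_{\tau_u}^{2(p+1)}\big]^{\frac{p}{p+1}}\,du.
\]
Convergence of this integral requires the preliminary bound at level $p+1$ (this is why \eqref{eq:condition_param_main_thm_1_1} is stated at $p+1$) together with $\EE[\exp\{\mu H(T\wedge\texp)\}]<\infty$ for some $\mu>p$, which is where Proposition~\ref{prop:schem_ExpoMoment} and \eqref{eq:ExpMomentX} enter and where condition~\eqref{eq:condition_param_main_thm_1_3} appears.

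Two smaller corrections. First, the stochastic weight $\Phi_s$ in the paper contains no $X_s^{-1}$ term: the $b(0)/X$ pieces are confined to the residual terms $T_4,T_5$ and bounded directly by moments, so the exponential moment of $\int X^{-1}$ is not invoked in this proof. Second, the inflation $\Dt^{2\varepsilon(p+1)}$ in \eqref{eq:condition_param_main_thm_1_1} does not come from the final Lenglart step (the paper runs Lenglart with $r=\tfrac12$, no inflation, to pass from $L^{2p}$ to $L^p$ with the sup inside); it arises because Lemma~\ref{lem:disc_contrib} uses the sup-inside local error estimate of Lemma~\ref{lem:local_error_prev}, and that lemma is applied at level $p+1$ in the H\"older step above.
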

The proof  of Theorem \ref{thm:strong_rate_stopped} is  detailed in Section \ref{sec:proof_under_H5}, after some preliminaries estimates. In the proof, we draw the following  estimation for  $\varepsilon \mapsto \Delta(\varepsilon)$, behaving mainly with an exponential decay: 
\begin{equation}\label{eq:def_final_delta_epsilon}\Delta(\varepsilon):=\exp( - \frac{1}{2\varepsilon} \log (\log(  \tfrac{1}{2 \varepsilon}))) \ \wedge \ \left\{\frac{1}{16\diffc}(\tfrac{2}{3})^\alpha \underline{\Delta \pntDisc}^{1 - \alpha} \right\}^{\tfrac{2}{1-2\varepsilon}}  \wedge \  \left\{\tfrac{8}{3} \frac{\diffc}{L_G}  (\tfrac{3}{2} \underline{\Delta \pntDisc})^{\alpha - \alphab}\right\}^{\tfrac{2}{1 + 2\varepsilon}} \wedge \ \frac{\underline{\Delta \pntDisc}}{4 b(0)},
\end{equation}
where $\underline{\Delta \pntDisc} =  \min\{(\pntDisc_{k+1} - \pntDisc_k); {k=0,\ldots, m-1} \}$. 

\begin{rem}
The extension of Theorem \ref{thm:strong_rate_stopped} to $p\geq 1/2$ with value in $\RR^+$ can be achieved without too much effort using the Itô Tanaka's formula, setting $Z_t = |Y_t| = |X_t - \oX_t|$, with dynamics of the form 
\begin{equation*}
d Z_t  =   \int _0^t \sgn(Y_s) b_Y(X_t,\oX_{\eta (s)})ds
+ \sigma  \int _0^t \sgn(Y_s) \sigma_Y(X_t,\oX_{\eta (s)})  dW_s + L^0_t(Y), 
\end{equation*} 
where $\sgn(x) := 1 - 2\,\ind_{(x\leq 0)}$, and $L^0(Y) = L^0(Z)$. Since $\int_0^t Z_s^{2p-1}  dL^0_s(Y)=0$, we have 
\begin{equation*}
d \EE[Z_t^{2p}]  =   \int _0^t 2p \EE[\sgn(Y_s) Z_t^{2p-1} b_Y(X_t,\oX_{\eta (s)})]ds
+   \int _0^t p(2p-1) \sigma_Y^2(X_t,\oX_{\eta (s)})  Y^{2p-2}_s ds
\end{equation*} 
There is no major difficulty in proceeding in this way, provided we carefully add the conditions for controlling negative moments. However, the proof, already rather long, would almost double in size due to the separation of the different cases to be discussed, which motivates us to work with $p\in\mathbb{N}$. 
\end{rem}

\subsection{The Itô formula for the error process}\label{sec:error_Ito_formula_Ti}

For simplicity, the integer  $p\geq 1$  allows us to consider equivalently, $|Y_t|^{2p} = Y_t^{2p}$, for the error process  
 $(Y_t= X_t-\oX_t, t\in[0,T])$, satisfying \eqref{eq:sde_Y}. Then, applying It\^o formula to $Y_t^{2p}$, we get
\begin{equation*} 
\begin{aligned}
d Y_t^{2p}  & =  {2p} Y_t^{2p-1}\left[ Y_t\left(\tfrac{b(X_t)-b(0)}{X_t}\right)  +  \oX_t  \left(\tfrac{b(X_t)-b(0)}{X_t} - \tfrac{b(\oX_{\eta(t)})-b(0)}{\oX_{\eta(t)}}\right)  +  b(0)\dt\left(\tfrac{b(\oX_{\eta(t)})-b(0)}{\oX_{\eta(t)}}\right)\right]dt\\
&\quad +{2p} Y_t^{2p-1} \left(Y_t \tfrac{\sigma(X_t)}{X_t}+ \oX_t \Big[\tfrac{\sigma(X_t)}{X_t}-\tfrac{\sigma(\oX_{\eta(t)})}{\oX_{\eta(t)}}\Big] +  b(0)\dt\tfrac{\sigma(\oX_{\eta(t)})}{\oX_{\eta(t)}}\right)dW_t \\
&\quad + p (2p -1)Y_t^{2p-2} \left(Y_t \tfrac{\sigma(X_t)}{X_t} + \oX_t \Big[\tfrac{\sigma(X_t)}{X_t}-\tfrac{\sigma(\oX_{\eta(t)})}{\oX_{\eta(t)}}\Big] +  b(0)\dt\tfrac{\sigma(\oX_{\eta(t)})}{\oX_{\eta(t)}}\right)^2dt.
\end{aligned}
\end{equation*}
On the last line, containing the squared sum, keeping  the same terms order and putting the smallest weigh of the second term, we apply the inequality $(a+b+c)^2\leq  2 b^2 + 4 a^2 + 4 c^2$ and obtain the upper-bound of the cross variation terms. Isolating the Brownian integrals, we enumerate the part of the drift term as follows:
\begin{equation}\label{eq:error_gamma_proto}
\begin{aligned}
d Y_t^{2p} & \leq \sum_{i=1}^5 T_i(t) \; dt + {2p} Y_t^{2p-1} \left[Y_t \frac{\sigma(X_t)}{X_t}+ \oX_t \Big[\frac{\sigma(X_t)}{X_t}-\frac{\sigma(\oX_{\eta(t)})}{\oX_{\eta(t)}}\Big] +  b(0)\dt\frac{\sigma(\oX_{\eta(t)})}{\oX_{\eta(t)}}\right]dW_t,
	\end{aligned}
\end{equation}
with
\begin{equation*}
\begin{array}{lr}
T_1(t) 
=  {2p} Y_t^{2p} \left(\tfrac{b(X_t)-b(0)}{X_t} + 2(2p -1) \tfrac{\sigma^2(X_t)}{X_t^2}\right),
&\quad 
T_4(t) 
= 2p b(0) \dt \   Y_t^{2p-1} \tfrac{b(\oX_{\eta(t)})-b(0)}{\oX_{\eta(t)}},
\\
T_2 (t) 
= 2p Y_t^{2p-1}  \oX_t   \left( \tfrac{b(X_t)-b(0)}{X_t} - \tfrac{b(\oX_{\eta(t)})-b(0)}{\oX_{\eta(t)}} \right) ,
&\quad
T_5(t) 
= 4 p (2p -1) b(0)^2 \  \dt^2   Y_t^{2p-2} \tfrac{\sigma^2(\oX_{\eta(t)})}{\oX_{\eta(t)}^2}, \\
T_3(t) 
 = 2 p (2p -1) Y_t^{2p-2}   \oX_t^{2} \Big[\tfrac{\sigma(X_t)}{X_t}-\tfrac{\sigma(\oX_{\eta(t)})}{\oX_{\eta(t)}}\Big]^2.
&
\end{array} 
\end{equation*}
Notice that the stochastic integral in \eqref{eq:error_gamma_proto} is a square-integrable martingale under the sufficient condition that the $2(2p +\alpha-1)$th-moments  for $X$ and $\oX$ are bounded. With Proposition \ref{prop:XMoments}  and Lemma \ref{lem:Schememoments}, this leads to the assumption that  $2p+\alpha-1\leq \frac12 +  \frac{B_2}{\diffc^2}$, when $\alphab = 2\alpha-1$, largely covered by \eqref{eq:condition_param_main_thm_1_1} or \eqref{eq:condition_param_thm_main_b_continuous_1}. 

\subsection{Preliminary  estimations for the treatment of the  discontinuity of $b$}
We use the short notation $\{\oX_{[s,t]}\in B(\pntDisc, \delta)\} $ for the event of the entire trajectory $(\oX_u,u\in[s,t])$ in the ball $B(\pntDisc,\delta)$.

With the help of the occupation time formula for diffusions,  we estimate some statistics of the  amount of  time the approximation process spends in a  ball of radius $\hhh$, centered at a given discontinuity point $\pntDisc$: 
\begin{lem}{[Sojourn time of the scheme in a given ball].}\label{lem:occupation_time} Assume \ref{H:polygrowth}, \ref{H:control}, \ref{H:diffusion_deriv}. 
Let $t\in[0,T]$  and $p >0$. Let $\pntDisc>0$ and $\Dt \leq \frac{\pntDisc}{4 b(0)}$.  Then, for any $0 < \hhh \leq \frac{1}{4}\pntDisc$,  for any $q\geq 1$,  there exists a constant $C$ (independent of $\Dt$ and $h$) such that 
\begin{equation}\label{eq:expo_moment_small_ball}
\begin{aligned}
 \left\|\int_0^t\ind_{\{ \oX_{[\eta(s), \eta(s) + \Dt]} \in B(\pntDisc,  2 \hhh )\}} ds\right\|_{L^q(\Omega)}
  &\leq \frac{C}{\displaystyle\inf_{y\in B(\pntDisc, \frac{\pntDisc}{2})} \sigma^2(y)}  \ \hhh,
\\ 
\text{and for all $\mu >0$, }\quad  \EE \left[ \exp\left\{\mu \int_0^t\ind_{\{ \oX_{[\eta(s), \eta(s) + \Dt]} \in B(\pntDisc, 2 \hhh )\}} ds \right\} \right]  & \leq  \exp\Big\{\mu \  \frac{C}{\displaystyle\inf_{y\in B(\pntDisc, \frac{\pntDisc}{2})} \sigma^2(y)} \  \hhh \Big\}. 
\end{aligned}
\end{equation}
\end{lem}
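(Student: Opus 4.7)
The plan is to convert the sojourn time into an occupation time: bound the diffusion density of $\oX$ from below on the event of interest, apply the occupation density formula to re-express the quantity in terms of local times of the scheme, then control the local time moments via Tanaka's formula combined with Burkholder--Davis--Gundy (for the $L^q$ estimate) and Bernstein's exponential inequality (for the exponential estimate).

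\textbf{Step 1 (lower bound on the diffusion coefficient).}  On the event $\{\oX_{[\eta(s),\eta(s)+\Dt]}\subset B(\pntDisc,2h)\}$, with $h\leq \pntDisc/4$, both $\oX_s$ and $\oX_{\eta(s)}$ lie in $[\pntDisc/2, 3\pntDisc/2]$. Using $\Dt\leq \pntDisc/(4 b(0))$ together with the minorisation \eqref{eq:scheme_minoration}, I would derive $\oX_s - b(0)\dt \geq \pntDisc/4$ on this event, so that
\begin{equation*}
\frac{d\langle \oX\rangle_s}{ds} = \left(\oX_s - b(0)\dt\right)^2\frac{\sigma^2(\oX_{\eta(s)})}{\oX_{\eta(s)}^2} \;\geq\; \frac{1}{36}\inf_{y\in B(\pntDisc,\pntDisc/2)} \sigma^2(y).
\end{equation*}

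\textbf{Step 2 (occupation density formula).}  Combining the above lower bound with the trivial inclusion $\{\oX_{[\eta(s),\eta(s)+\Dt]}\subset B(\pntDisc,2h)\}\subset\{\oX_s\in B(\pntDisc,2h)\}$ and the occupation density formula for continuous semimartingales yields
\begin{equation*}
\int_0^t \ind_{\{\oX_{[\eta(s),\eta(s)+\Dt]}\subset B(\pntDisc,2h)\}}\, ds \;\leq\; \frac{36}{\inf_{B(\pntDisc,\pntDisc/2)} \sigma^2}\int_{B(\pntDisc,2h)} L^a_t(\oX)\,da,
\end{equation*}
where $L^a_t(\oX)$ denotes the local time of the continuous semimartingale $\oX$ at the level $a$. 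Since the ball has Lebesgue measure $4h$, the announced linear dependence in $h$ will arise from this integration, provided the family $\{L^a_t(\oX)\}_{a\in B(\pntDisc,2h)}$ is uniformly controlled.

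\textbf{Step 3 ($L^q$ and exponential bounds on the local times).}  Tanaka's formula gives
\begin{equation*}
L^a_t(\oX) = |\oX_t - a| - |\oX_0 - a| - \int_0^t \sgn(\oX_u - a)\,d\oX_u.
\end{equation*}
Integrating in $a$ over $B(\pntDisc,2h)$ and applying Fubini, the first two terms contribute $O(h)$ uniformly in $\omega$ because $a$ varies in a bounded interval. For the stochastic integral part, the drift of $d\oX_u$ is controlled in $L^q$ via \ref{H:control} and Lemma \ref{lem:Schememoments}, while the martingale part is controlled by BDG applied to its quadratic variation $\int_0^t (\oX_u - b(0)\delta(u))^2 \sigma^2(\oX_{\eta(u)})/\oX_{\eta(u)}^2\,du$ whose moments are bounded under \ref{H:polygrowth}-\ref{H:control}. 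This yields the desired $L^q$ bound with the linear $h$ prefactor.

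\textbf{Main obstacle.}  The principal difficulty is the exponential estimate, since the quadratic variation appearing in Tanaka's formula is not deterministically bounded. Here I would invoke Bernstein's exponential inequality for continuous local martingales, $\EE[\exp(\lambda M_t)] \leq \EE[\exp(\lambda^2 \langle M\rangle_t/2)]^{1/2}\EE[\exp(2\lambda^2\langle M\rangle_t)]^{1/2}$ (in its good-$\lambda$ form), combined with the exponential moment control of $\int_0^T \oX_s^{2\alpha}\,ds$, obtained by adapting the argument of Proposition \ref{prop:schem_ExpoMoment} (or Proposition \ref{prop:XMoments} for $X$). The crucial point is that the $h$-prefactor coming from Step 2 makes the effective coefficient $\mu\cdot 4h/\inf\sigma^2$ small, so that the $\lambda^2\langle M\rangle$-term is of order $h^2$ and absorbable, ultimately delivering the announced bound $\exp\{\mu C h/\inf_{B(\pntDisc,\pntDisc/2)}\sigma^2(y)\}$.
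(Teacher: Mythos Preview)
Your Steps 1--2 are essentially the paper's opening moves, but Step 3 has a genuine gap that affects both the $L^q$ bound for large $q$ and, more seriously, the exponential estimate.

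The problem is that in Step 2 you pass from the path-in-ball indicator $\ind_{\{\oX_{[\eta(s),\eta(s)+\Dt]}\subset B(\pntDisc,2h)\}}$ to the pointwise indicator $\ind_{\{\oX_s\in B(\pntDisc,2h)\}}$ via the trivial inclusion, and then apply Tanaka's formula \emph{globally} on $[0,t]$. The resulting stochastic integral $\int_0^t \sgn(\oX_u-a)\,d\oX_u$ (or its $a$-average) has drift and diffusion parts of polynomial growth in $\oX_u$, which is no longer confined to any compact set. For the $L^q$ bound this forces you to invoke Lemma~\ref{lem:Schememoments}, which in the critical case $\alphab=2\alpha-1$ supplies moments only up to order $1+2B_2/\diffc^2$; your argument therefore fails for large $q$ without extra parameter restrictions absent from the statement. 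For the exponential estimate you propose controlling $\EE[\exp(\lambda^2\langle M\rangle_t)]$ with $\langle M\rangle_t$ involving $\int_0^t\oX_u^{2\alpha}\,du$, but the only exponential moment available (Proposition~\ref{prop:schem_ExpoMoment}) concerns $\int_0^{t\wedge\texp}\oX_{\eta(s)}^{\alphab-1}\,ds$ for the \emph{stopped} process and exponent $\alphab-1=2\alpha-2$, not $2\alpha$; the adaptation you sketch does not go through. The ``small effective coefficient'' argument is also illusory, since $h$ ranges up to the fixed value $\pntDisc/4$.

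The paper avoids all of this by a trick you are missing: it does \emph{not} discard the path-in-ball indicator. It decomposes the sojourn time over the mesh intervals $[t_k,t_{k+1}]$, applies the It\^o--Tanaka--Meyer formula on each subinterval, and keeps the factor $\ind_{\{\oX_{[t_k,t_{k+1}]}\subset B(\pntDisc,2h)\}}$ multiplying the local-time increment. Summing over $k$ yields a bound $\int_0^t\ind_{\{\ldots\}}\,ds\leq Ch\,H_t(\oX)$, where $H_t(\oX)$ is controlled by $\bigl|\int_0^t \ind_{\{\oX_{[\eta(s),\eta(s)+\Dt]}\subset B(\pntDisc,2h)\}}\,d\oX_s\bigr|$. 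The key point is that on the support of this integrand both $\bexp$ and $\sexp$ are \emph{deterministically} bounded by explicit polynomials in $\pntDisc$, so the drift contribution is bounded by $Ct$ and the martingale part has quadratic variation bounded by $Ct$, uniformly in $\omega$. This yields $\|H_t(\oX)\|_{L^q}$ bounded by a constant essentially uniform in $q$, from which both the $L^q$ and the exponential estimate follow directly---with no appeal to moment or exponential-moment bounds on the unrestricted scheme.
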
 
\begin{proof} We focus the proof of the first statement in \eqref{eq:expo_moment_small_ball}, the second one being implied by this first.  We introduce the following interpolated dynamics of $\oX$, coinciding with \eqref{eq:ContExpscheme}: 
\begin{equation}\label{eq:Xbar_coeff}
\begin{array}{c}
  d\overline{X}_t 
 := \bexp(\oX_t, \oX_{\eta(t)}) dt + \sexp(\oX_t, \oX_{\eta(t)}) dW_t \\[0.2cm]
\bexp(x,y) = (x-b(0)\dt)\Big(\dfrac{b(y)-b(0)}{y}\Big)  + b(0), 
 \qquad\qquad\quad  \sexp(x,y) = \dfrac{\sigma(y)}{y}(x-b(0)\dt). 
\end{array}
\end{equation}
We observe that
\begin{equation*}
\int_0^t\ind_{\{ \oX_{[\eta(s), \eta(s) + \Dt]} \in B(\pntDisc, 2 \hhh )\}} ds 
= \sum_{k=0}^{\frac{\eta(t)}{\Dt}}  \ind_{\{ \oX_{[t_k,t\wedge t_{k+1}]}\in B(\pntDisc , 2\hhh)\}} \int_{t_k}^{t\wedge t_{k+1}}  \ind_{\{ \oX_s \in B(\pntDisc , 2\hhh)\}} ds.
\end{equation*}
Since $\sigma$ is continuous, then it is bounded on  $B(\pntDisc, 2\hhh)$ and the diffusion $\sexp^2$ is bounded below on the ball $B(\pntDisc, 2\hhh)^2$ by $\frac{(\pntDisc  - 2\hhh -b(0)\Dt )^{2}}{(\pntDisc + 2\hhh)^2}\ \inf_{y\in B(\pntDisc, 2\hhh)}{\sigma^2(y)} $.   With the imposed conditions  that  $\hhh\leq \pntDisc/4$ and $\Dt\leq \frac{\pntDisc}{4b(0)}$, we have 
\begin{equation*}
\int_{t_k}^{t\wedge t_{k+1}}  \ind_{\{ \oX_s \in B(\pntDisc , 2\hhh)\}} ds  \leq 
\frac{36}{\displaystyle
\inf_{y\in B(\pntDisc, \pntDisc/2)}\sigma^2(y)} \int_{t_k}^{t\wedge t_{k+1}}  \sexp(\oX_s,\oX_{\eta(s)})^2 \ind_{\{ \oX_s \in B(\pntDisc , 2\hhh)\}}ds. 
\end{equation*}
The  occupation time formula (see, e.g.~\cite[Ex.1.15,Chap.VI]{Revuz_Yor_1999}) leads to representation 
\begin{equation*}
\int_{t_k}^{t\wedge t_{k+1}}  \sexp(\oX_s,\oX_{\eta(s)})^2\ind_{\{ \oX_{s}\in B(\pntDisc , 2\hhh)\}} ds  = \int_{\RR} \ind_{[\pntDisc -2\hhh, \pntDisc + 2\hhh ]} (y) ( L^y_{t\wedge t_{k+1}}(\oX) - L^y_{t_k}(\oX) )  \ dy 
\end{equation*} 
where $L^y(\oX)$ is the (increasing) local time in $y$ of the Itô process $\oX$. Then 
\begin{equation*}
\int_{t_k}^{t\wedge t_{k+1}}  \ind_{\{ \oX_s \in B(\pntDisc , 2\hhh)\}} ds  \leq {C} \hhh 
\left( \sup_{y\in B(\pntDisc , 2\hhh)} \tfrac{1}{2}( L^y_{t\wedge t_{k+1}}(\oX) - L^y_{t_k}(\oX) )\right). 
\end{equation*}
By the Itô-Tanaka-Meyer formula, 
\begin{equation*} 
0 \leq \tfrac12 ( L^y_{t\wedge t_{k+1}}(\oX) - L^y_{t_k}(\oX) ) = 
(\oX_{t\wedge t_{k+1}}-y)_+ - (\oX_{t_k}-y)_+ -  \int_{t_k}^{t\wedge t_{k+1}} \ind_{\{\oX_{s-} > y\}}d\oX_s. 
\end{equation*} 
But for all $y$, from the Lipschitz property of the positive part function, it is easy to check that
\begin{equation*} 
(\oX_{t_{k+1}}-y)_+ - (\oX_{t_k}-y)_+  \leq
 (\oX_{t_{k+1}} - \oX_{t_k})\ 
 \ind_{\{\oX_{t_{k}} \leq \oX_{t_{k+1}} \}}
 = \ \int_{t_k}^{t_{k+1}} \  \ind_{\{ \oX_{\eta(s)} \leq \oX_{\eta(s) + \Dt}\}} d\oX_s. 
\end{equation*} 
By summing on the time intervals, we have obtained that 
\[ \int_0^t\ind_{\{ \oX_{[\eta(s), \eta(s) + \Dt]} \in B(\pntDisc, 2 \hhh )\}} ds \leq C \hhh \ H_t(\oX),\]
with 
\begin{equation*}
\begin{aligned} 
 H_t(\oX):= &  \sup_{y\in B(\pntDisc , 2\hhh)}  \sum_{k=1}^{\frac{\eta(t)}{\Dt}}  \ind_{\{ \oX_{[t_k,t\wedge t_{k+1}]}\in B(\pntDisc , 2\hhh)\}}
\frac12 ( L^y_{t\wedge t_{k+1}}(\oX) - L^y_{t_k}(\oX) ),
\end{aligned}
\end{equation*} 
and $|H_t(\oX)|$ further bounded by 
\begin{equation*}
\begin{aligned} 
|H_t(\oX)|  & \leq 
\left| \int_0^t \ind_{\{ \oX_{[\eta(s), \eta(s) + \Dt]}\in B(\pntDisc , 2\hhh)\}} \ind_{\{ \oX_{\eta(s)} \leq \oX_{\eta(s) + \Dt}\}} d\oX_s  \right|
\\
& \qquad  +\sup_{y\in B(\pntDisc , 2\hhh)}   \left|  \int_{0}^{t}   \ind_{\{ \oX_{[\eta(s), t \wedge \eta(s) + \Dt]}\in B(\pntDisc , 2\hhh)\}}  \ind_{\{\oX_{s-} > y\}}d\oX_s \right|\\
& \leq 2 \int_0^t \ind_{\{ \oX_{[\eta(s), \eta(s) + \Dt]}\in B(\pntDisc , 2\hhh)\}}  d\oX_s .
\end{aligned}
\end{equation*} 
To complete the proof, it remains to verify that for any $q\geq 1$, $\EE[H^q(\oX)]$ is bounded with a polynomial dependency in $q$ for the right-hand side constant: 
\begin{equation*}
\begin{aligned} 
\EE[H^q(\oX)] 
& \leq 2^q  \EE\left[ \left( \int_0^t \ind_{\{ \oX_{[\eta(s), \eta(s) + \Dt]}\in B(\pntDisc , 2\hhh)\}}  d\oX_s \right)^q \right]\\
& \leq 2^{2q} \EE\left[ \left( \int_0^t \ind_{\{ \oX_{[\eta(s), \eta(s) + \Dt]}\in B(\pntDisc , 2\hhh)\}}  \bexp(\oX_s, \oX_{\eta(s)}) ds \right)^q \right]\\
&\quad + 2^{2q} \EE^\frac12 \left[ \int_0^t \ind_{\{ \oX_{[\eta(s), \eta(s) + \Dt]}\in B(\pntDisc , 2\hhh)\}} \sexp(\oX_t, \oX_{\eta(t)})^{2q} ds \right]\\
& \leq 2^{2q} t^q (C \  \pntDisc^{\alphab}\vee\pntDisc + b_0)^q \  + \ 2^{2q} t^\frac{q}{2} \diffc^q (\tfrac{3}{2}\pntDisc)^{\alpha q}.
\end{aligned}
\end{equation*}
\end{proof}

In the convergence analysis, the only instance where the discontinuity of the drift $b$ needs to be addressed is within the local error term singled out on the right-hand side of \eqref{eq:disc_contrib} below, and examined in the following lemma: 
\begin{lem}\label{lem:disc_contrib}
Assume \ref{H:pieceloclip}, \ref{H:polygrowth}, \ref{H:control} and \ref{H:diffusion_deriv}.  For all $0 < \varepsilon < \frac12$, 
consider $0 < \Delta(\varepsilon) \leq 1$,  given by \eqref{eq:def_final_delta_epsilon}. Then, for all $\Delta t \leq \Delta(\varepsilon)$,  for any integer $p\geq1$  satisfying 
\begin{equation}\label{cond:lemma_discont}
\begin{aligned}
\ind_{\{2\alpha-1\}}(\alphab)\  ( \ 2p + \Dt^{2 p \varepsilon}  ) \ (2 \alpha -1)  &  \leq \tfrac12+\tfrac{B_2}{\diffc^2}, 
\end{aligned}
\end{equation}
there exits a positive constant $C$, independent of $\Dt$ and $\epsilon$, such that 
\begin{equation}\label{eq:disc_contrib}
\begin{aligned}
& \EE\left[\left| Y_t^{2p-1}  \oX_t\left( \frac{b(\oX_t)-b(0)}{\oX_t} - \frac{b(\oX_{\eta(t)})-b(0)}{\oX_{\eta(t)}} \right)  \right|\right] \\
& \leq C \EE[ Y_t^{2p} ] +  C  \Dt^{p (1 - 2\varepsilon)}  + \EE\left[|Y_t|^{2p-1}  \sum_{i=1}^m\ind_{  \left\{\oX_{[\eta(t), \eta(t) + \Dt]}\in B(\pntDisc_i , 8\diffc  (\tfrac{3}{2} \underline{\Delta \pntDisc})^{\alpha}\ \Dt^{1/2-\varepsilon} )\right\}}\right].
\end{aligned}
\end{equation}
When $b$ is a continuous function, for all  $\Delta t \leq  1$,  for any integer $p\geq1$  satisfying 
\begin{equation}\label{cond:lemma_cont}
\begin{aligned}
\ind_{\{2\alpha-1\}}(\alphab) \ \ p \ (4 \alpha - 3)   \leq \tfrac12+\tfrac{B_2}{\diffc^2}, 
\end{aligned}
\end{equation}
there exits a positive constant $C$, independent of $\Dt$, such that 
\begin{equation}\label{eq:cont_contrib}
\begin{aligned}
\EE\left[\left| Y_t^{2p-1}  \oX_t\left( \frac{b(\oX_t)-b(0)}{\oX_t} - \frac{b(\oX_{\eta(t)})-b(0)}{\oX_{\eta(t)}} \right)  \right|\right] \leq C \EE[ Y_t^{2p} ] +  C  \Dt^{p}. 
\end{aligned}
\end{equation}
\end{lem}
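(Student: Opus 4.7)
The plan is to first split the expression by an algebraic identity and then treat the two pieces separately, localizing the discontinuities of $b$ to small balls around the jump points. I would start from
\[
\oX_t\Big(\frac{b(\oX_t)-b(0)}{\oX_t}-\frac{b(\oX_{\eta(t)})-b(0)}{\oX_{\eta(t)}}\Big) = \bigl(b(\oX_t)-b(\oX_{\eta(t)})\bigr)\,-\,\frac{b(\oX_{\eta(t)})-b(0)}{\oX_{\eta(t)}}\bigl(\oX_t-\oX_{\eta(t)}\bigr).
\]
The subtracted term is free of any discontinuity issue: \ref{H:polygrowth}-$(b)$ gives $|b(\oX_{\eta(t)})-b(0)|/\oX_{\eta(t)}\leq\Lg(\oX_{\eta(t)}^{\alphab-1}\vee 1)$, and Young's inequality $|Y_t|^{2p-1}|a|\leq C|Y_t|^{2p}+C|a|^{2p}$ followed by H\"older produces a contribution $\leq C\EE[|Y_t|^{2p}]+C\Dt^{p}$ after applying Lemma~\ref{lem:Schememoments} (for the $2p(\alphab-1)$-moment of $\oX$) and Proposition~\ref{prop:local_error} (for the local-error rate $\Dt^{1/2}$). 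The parameter condition \eqref{cond:lemma_discont} (resp.\ \eqref{cond:lemma_cont} in the continuous case) is exactly what guarantees these moments exist.

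The core term is $b(\oX_t)-b(\oX_{\eta(t)})$. In the continuous case, the piecewise locally Lipschitz hypothesis \ref{H:pieceloclip}-{\bf(i)} applies uniformly on $\RR^+$ and yields $|b(\oX_t)-b(\oX_{\eta(t)})|\leq\Lips(1+(\oX_t\vee\oX_{\eta(t)})^{\alphab-1})|\oX_t-\oX_{\eta(t)}|$; the same Young/H\"older/moment machinery then closes \eqref{eq:cont_contrib}. In the discontinuous case, I would split the drift into a continuous part $\tilde b$ (obtained by subtracting off the step jumps, which remains globally piecewise locally Lipschitz with the same polynomial constant) and a jump part $J(x)=\sum_{i=1}^m\Delta b_i\,\ind_{\{x>\pntDisc_i\}}$ with $\Delta b_i<0$. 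The $\tilde b$-contribution is handled as in the continuous case, and it remains to bound $\EE[|Y_t|^{2p-1}|J(\oX_t)-J(\oX_{\eta(t)})|]\leq\sum_i|\Delta b_i|\,\EE[|Y_t|^{2p-1}\ind_{\{\oX_{[\eta(t),\eta(t)+\Dt]}\text{ crosses }\pntDisc_i\}}]$.

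To control the crossing indicator I introduce the ``small-oscillation'' event
\[
\mathcal{O}_t = \Big\{\sup_{s\in[\eta(t),\eta(t)+\Dt]}|\oX_s-\oX_{\eta(t)}|\leq h/2\Big\},\qquad h = 8\diffc\bigl(\tfrac{3}{2}\underline{\Delta\pntDisc}\bigr)^{\alpha}\Dt^{1/2-\varepsilon},
\]
and the constraint $\Dt\leq\Delta(\varepsilon)$ in \eqref{eq:def_final_delta_epsilon} is precisely tuned so that $h\leq\underline{\Delta\pntDisc}/2$, ensuring at most one $\pntDisc_i$ lies in any ball $B(\oX_{\eta(t)},h)$. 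On $\mathcal{O}_t$, a crossing of $\pntDisc_i$ forces $|\oX_{\eta(t)}-\pntDisc_i|\leq h/2$ by continuity of $\oX$, so the whole trajectory sits in $B(\pntDisc_i,h)$; this produces exactly the indicator term of \eqref{eq:disc_contrib}.

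The main obstacle, and the source of the $\Dt^{p(1-2\varepsilon)}$ penalty, is the residual event $\mathcal{O}_t^c$. There I would combine Markov's inequality with the refined local-error bound \eqref{eq:lem_first_local_error_estimate_sup_inside} of Lemma~\ref{lem:local_error_prev} at the enlarged exponent $p+\Dt^{2p\varepsilon}$ permitted by \eqref{cond:lemma_discont}: $\PP(\mathcal{O}_t^c)\leq C\Dt^{2p(\varepsilon-\varepsilon')}$ for a small inner exponent $\varepsilon'$ absorbed into the Lenglart constant. Balancing this decay against the polynomial moments of $\oX$ from Lemma~\ref{lem:Schememoments} via H\"older yields a contribution of order $\Dt^{p(1-2\varepsilon)}$, provided $\Delta(\varepsilon)$ is small enough to keep the Lenglart constant bounded; the first factor in \eqref{eq:def_final_delta_epsilon}, of the form $\exp(-\tfrac{1}{2\varepsilon}\log\log(\tfrac{1}{2\varepsilon}))$, is calibrated for exactly this purpose. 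The continuous case drops all of these residual steps and recovers the sharp rate $\Dt^{p}$ in \eqref{eq:cont_contrib}.
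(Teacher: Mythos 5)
Your overall architecture (the algebraic split, absorbing the $\frac{b(\oX_{\eta(t)})-b(0)}{\oX_{\eta(t)}}(\oX_t-\oX_{\eta(t)})$ part by polynomial growth, Young/H\"older, Lemma~\ref{lem:Schememoments} and Proposition~\ref{prop:local_error}, and the confinement argument showing that on the small-oscillation event a crossing forces the whole path into $B(\pntDisc_i,h)$, which yields the indicator term) is consistent with the paper's proof, which writes the same identity in the form $\frac{\oX_t}{\oX_t-\oX_{\eta(t)}}\big(\cdots\big)=\frac{b(\oX_t)-b(\oX_{\eta(t)})}{\oX_t-\oX_{\eta(t)}}-\frac{b(\oX_{\eta(t)})-b(0)}{\oX_{\eta(t)}}$ and isolates the analogous events.

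However, your treatment of the residual event $\mathcal{O}_t^c$ has a genuine gap, and it is precisely the step where the paper's main new ingredient lives. Your threshold shrinks like $h/2\sim\Dt^{1/2-\varepsilon}$, so Markov's inequality at an admissible exponent $2q$ combined with \eqref{eq:lem_first_local_error_estimate_sup_inside} only gives $\PP(\mathcal{O}_t^c)\leq C\,h^{-2q}\,\Dt^{q(1-2\varepsilon')}\leq C\,\Dt^{2q(\varepsilon-\varepsilon')}$; since the moment/local-error conditions in \eqref{cond:lemma_discont} cap $q$ at roughly $2p$ (when $\alphab=2\alpha-1$), this is at best $\Dt^{4p(\varepsilon-\varepsilon')}$, which for small $\varepsilon$ is nowhere near the required $\Dt^{p(1-2\varepsilon)}$, and no H\"older balancing against bounded polynomial moments can repair a probability estimate that weak. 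The paper resolves this by a two-scale decomposition: oscillations above the \emph{fixed} threshold $\uu=\pntDisc_m/4$ are handled by Markov plus Lemma~\ref{lem:local_error_prev} (which works only because that threshold does not shrink with $\Dt$), while oscillations between $f(\Dt)\sim\Dt^{1/2-\varepsilon}$ and $\uu$, with the path confined to the bounded ball $B(\pntDisc_m,\uu+f(\Dt))$, are controlled by Bernstein's exponential inequality for the localized martingale — the confinement is essential because it makes $\sexp$ bounded (by $c=\diffc(\tfrac32\pntDisc_m)^\alpha$) and the drift contribution at most $\tfrac12 f(\Dt)$, giving $\PP\leq\exp(-c^{-2}f^2(\Dt)/(8\Dt))=\exp(-c'\Dt^{-2\varepsilon})\leq\Dt^{p(1-2\varepsilon)}$ once $\Dt\leq\Delta(\varepsilon)$. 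This also corrects your reading of \eqref{eq:def_final_delta_epsilon}: the first factor $\exp(-\tfrac{1}{2\varepsilon}\log\log\tfrac{1}{2\varepsilon})$ is calibrated for exactly this exponential-versus-polynomial comparison (and the third and fourth factors for the drift bound $\sup|\bexp|\leq\tfrac12 f(\Dt)/\Dt$ and $f(\Dt)\leq\pntDisc_m/4$), not for keeping the Lenglart constant bounded — that constant is absorbed through the enlarged exponent $p+\Dt^{2p\varepsilon}$ already present in \eqref{cond:lemma_discont}. Without the intermediate confinement and the Bernstein step, your argument cannot reach the stated rate.
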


\begin{proof}
Without loss of generality, we simplify the discussion in this proof, by assuming that $b$ has only one discontinuity point $\pntDisc_m$. We consider a ball $B(\pntDisc_m , f(\Dt) )$, with $f(\Dt)\leq \pntDisc_m/4$ to be chosen later  small enough.   We decompose 
\begin{equation}\label{eq:aux_A}
A:= Y_t^{2p-1}  \oX_t\left( \frac{b(\oX_t)-b(0)}{\oX_t} - \frac{b(\oX_{\eta(t)})-b(0)}{\oX_{\eta(t)}} \right) 
\end{equation}
discussing the relative position of $\oX_t$ and $\oX_{\eta(t)}$ w.r.t $\pntDisc_m$. First we introduce the two  events
\begin{equation*}
A \left(\ind_{\{\oX_{\eta(t)}\in B(\pntDisc_m , f(\Dt) )\}} + \ind_{\{\oX_{\eta(t)}\notin B(\pntDisc_m , f(\Dt) )\}}\right).
\end{equation*}
These events are further decomposed.  For the event $\{\oX_{\eta(t)}\notin B(\pntDisc_m , f(\Dt) )\}$, we isolate the discontinuity point $\pntDisc_m$, while discussing the distance  $| \oX_{\cdot}-\oX_{\eta(t)}|$, compared to the threshold $\uu$: 
\begin{equation*}
\begin{aligned}
& \ind_{\{\oX_{\eta(t)}\notin B(\pntDisc_m , f(\Dt) )\}}   \leq 
\ind_{\{ (\oX_{\eta(t)} \wedge \oX_t) > \pntDisc_m  \}}  +  \ind_{\{(\oX_{\eta(t)}\vee \oX_t) < \pntDisc_m   \}}  + \ind_{\{| \oX_{t}-\oX_{\eta(t)}| \geq  f(\Dt) \}} \\
& \leq \left(\ind_{\{ (\oX_{\eta(t)} \wedge \oX_t) > \pntDisc_m  \}}  +  \ind_{\{(\oX_{\eta(t)}\vee \oX_t) < \pntDisc_m   \}} \right) \\
& \quad  + \ind_{\{| \oX_{t}-\oX_{\eta(t)}| \geq  
f(\Dt) \}}\left(\ind_{\{\sup_{s\in[\eta(t),t)}| \oX_{s}-\oX_{\eta(t)}| \geq\uu \}} + \ind_{\{ \sup_{s\in[\eta(t),t)}| \oX_{s}-\oX_{\eta(t)}| \leq\uu\} \cap \{(\oX_{\eta(t)} \wedge \oX_t) \leq \pntDisc_m \leq (\oX_{\eta(t)}\vee \oX_t) \}}\right).
\end{aligned}
\end{equation*}
Notice that  $\pntDisc_m - \uu  - f(\Dt) \geq \pntDisc_m /2$  is far from zero, and  
\begin{equation*}
\begin{aligned}
 \ind_{\{\oX_{\eta(t)}\notin B(\pntDisc_m , f(\Dt) )\}}   
& \leq  \left(\ind_{\{ (\oX_{\eta(t)} \wedge \oX_t) > \pntDisc_m  \}}  +  \ind_{\{(\oX_{\eta(t)}\vee \oX_t) < \pntDisc_m   \}} \right) \\
& \quad + 
\left(
\ind_{\{\sup_{s\in[\eta(t),\eta(t)+\Dt)}| \oX_{s}-\oX_{\eta(t)}| \geq\uu \}} + 
\ind_{\{| \oX_{t}-\oX_{\eta(t)}| \geqslant f(\Dt) \}} \ 
\ind_{\{\oX_{[\eta(t),\eta(t)+\Dt)}  \in B(\pntDisc_m,\uu)| \}}\right).
\end{aligned}
\end{equation*}
We decompose the event $\{\oX_{\eta(t)}\in B(\pntDisc_m , f(\Dt) )\}$, following the same idea: 
\begin{equation*}
\begin{aligned}
& \ind_{\{\oX_{\eta(t)}\in B(\pntDisc_m , f(\Dt) )\}}    \\
& = \ind_{\{\oX_{\eta(t)}\in B(\pntDisc_m , f(\Dt) )\}} \left( 
 \ind_{\{\sup_{s\in[\eta(t), \eta(t) + \Dt)}| \oX_{s}-\oX_{\eta(t)}| \geq \uu \}} +
  \ind_{\{\sup_{s\in[\eta(t), \eta(t) + \Dt)}| \oX_{s}-\oX_{\eta(t)}| \leq \uu \}} \right)\\
& \quad \leq \ind_{\{\sup_{s\in[\eta(t), \eta(t) + \Dt)}| \oX_{s}-\oX_{\eta(t)}| \geq \uu \}}  \\
& \quad \quad + 
 \ind_{\{\oX_{\eta(t)}\in B(\pntDisc_m , f(\Dt) )\}} \left( 
 \ind_{\{\oX_{[\eta(t), \eta(t) + \Dt]}\in B(\pntDisc_m , 2f(\Dt) )\}}  + 
  \ind_{\{\oX_{[\eta(t), \eta(t) + \Dt]}\in B(\pntDisc_m ,\uu+f(\Dt) ) -  B(\pntDisc_m , 2f(\Dt) )\}} \right) \\
& \quad \leq  \ind_{\{\sup_{s\in[\eta(t), \eta(t) + \Dt)}| \oX_{s}-\oX_{\eta(t)}| \geq \uu \}} \\
& \quad +  \ind_{\{\oX_{\eta(t)}\in B(\pntDisc_m , f(\Dt) )\}}\ind_{  \{\oX_{[\eta(t), \eta(t) + \Dt]}\in B(\pntDisc_m , 2f(\Dt) )\}}  \\
& \quad +  \ind_{\{\oX_{\eta(t)}\in B(\pntDisc_m , f(\Dt) )\}}\ind_{\{\oX_{[\eta(t), \eta(t) + \Dt]}\in B(\pntDisc_m ,\uu+f(\Dt) ) \}} \ind_{\{\sup_{s\in[\eta(t), \eta(t) + \Dt)}| \oX_{s}-\oX_{\eta(t)}| \geq f(\Dt) \}}.
\end{aligned}
\end{equation*}
After small bound in the indicators and joining common terms, we can gather the two decompositions  as follows, identifying four terms that we evaluate with different techniques:   
\begin{equation*}
\begin{aligned}
A  & \leq  
\left(\ind_{\{ (\oX_{\eta(t)} \wedge \oX_t) > \pntDisc_m  \}}  +  \ind_{\{(\oX_{\eta(t)}\vee \oX_t) < \pntDisc_m   \}} \right) \ A  \\
& \quad + 2 \ \ind_{\{\sup_{s\in[\eta(t), \eta(t) + \Dt)}| \oX_{s}-\oX_{\eta(t)}| \geq \uu \}} \  A  \\
& \quad  +  \ind_{  \{\oX_{[\eta(t), \eta(t) + \Dt]}\in B(\pntDisc_m , 2f(\Dt) )\}}  \ A  \\
& \quad  +  2 \ \ind_{\{\oX_{[\eta(t), \eta(t) + \Dt]}\in B(\pntDisc_m ,\uu+f(\Dt) ) \}} \ind_{\{\sup_{s\in[\eta(t), \eta(t) + \Dt)}| \oX_{s}-\oX_{\eta(t)}| \geq f(\Dt) \}} \ A \\
& \qquad := A_{1} + A_{2} + A_{3} + A_{4}.
\end{aligned}
\end{equation*}
In what follows, the changing generic constant $C$ remains  uniformly bounded in $\Dt$, but may depend on $\pntDisc_m$. 

\subsubsection*{Bound for $|A_{3}|$. }
The path $\oX_{[\eta(t), \eta(t) + \Dt]}$ being confined in the ball $ B(\pntDisc_m , 2f(\Dt) )\}$, this   term  is just bounded as 
\begin{equation*}
\begin{aligned}
|A_{3}| = & \left|Y_t^{2p-1}  \oX_t\left( \frac{b(\oX_t)-b(0)}{\oX_t} - \frac{b(\oX_{\eta(t)})-b(0)}{\oX_{\eta(t)}} \right) \right| \ind_{  \{\oX_{[\eta(t), \eta(t) + \Dt]}\in B(\pntDisc_m , 2f(\Dt) )\}}  \\ 
& \leq C  |Y_t|^{2p-1}  \ind_{  \{\oX_{[\eta(t), \eta(t) + \Dt]}\in B(\pntDisc_m , 2f(\Dt) )\}}.
\end{aligned}
\end{equation*}

\subsubsection*{Bound for $\EE|A_1|$,  far from the discontinuity. }
The indicators locate the arguments of $b$ in its intervals of continuity, as $\pntDisc_m \notin [\oX_t\wedge\oX_{\eta(t)},\oX_t\vee\oX_{\eta(t)}]$, where $b$  is locally Lipschitz. 
We observe that
\begin{equation*}
 \oX_t^{2p}\left( \frac{b(\oX_t)-b(0)}{\oX_t} - \frac{b(\oX_{\eta(t)})-b(0)}{\oX_{\eta(t)}} \right)^{2p} =  (\oX_t - \oX_{\eta(t)})^{2p}\left\{\frac{\oX_t}{\oX_t - \oX_{\eta(t)}} \left(\frac{b(\oX_t)-b(0)}{\oX_t} - \frac{b(\oX_{\eta(t)})-b(0)}{\oX_{\eta(t)}} \right)\right\}^{2p}.
\end{equation*}
From \ref{H:pieceloclip} in the  continuity interval $[\oX_t\wedge\oX_{\eta(t)},\oX_t\vee\oX_{\eta(t)}]$, and \ref{H:polygrowth}
\begin{equation*}
\begin{aligned}
& \left|\frac{\oX_t}{\oX_t - \oX_{\eta(t)}}\left( \frac{b(\oX_t)-b(0)}{\oX_t} - \frac{b(\oX_{\eta(t)})-b(0)}{\oX_{\eta(t)}} \right)\right| = \left| \frac{b(\oX_t)-b(\oX_{\eta(t)})}{{\oX_t - \oX_{\eta(t)}}}- \frac{b(\oX_{\eta(t)})-b(0)}{\oX_{\eta(t)}}\right| \\
& \quad\leq \Lips ( 1+\oX_t^{\alphab-1}+\oX_{\eta(t)}^{\alphab-1}) + \Lg (\oX_{\eta(t)}^{\alphab-1} \vee 1).
\end{aligned}
\end{equation*}
Therefore, for any $r_2>1$ (to be chosen later), for another constant $C$
\begin{equation*}
\EE\left[ \oX_t^{2p}\left( \frac{b(\oX_t)-b(0)}{\oX_t} - \frac{b(\oX_{\eta(t)})-b(0)}{\oX_{\eta(t)}} \right)^{2p}\right] 
\leq C \left(1+\sup_{t\in[0,T]}\EE^{\frac{r_2-1}{r_2}}\Big[\oX_t^{2p(\alphab-1)\frac{r_2}{r_2-1}}\Big]\right)\EE^{\frac{1}{r_2}}\Big[|\oX_t - \oX_{\eta(t)}|^{2p r_2}\Big],
\end{equation*}
and we get, applying a Young inequality in $A_1$ first,  (and removing the indicators) 
\begin{equation*}
\EE[|A_1|] \leq C \EE[ Y_t^{2p} ] +  C  \left(1+\sup_{t\in[0,T]}\EE^{\frac{r_2-1}{r_2}}\Big[\oX_t^{2p(\alphab-1)\frac{r_2}{r_2-1}}\Big]\right)
\|\oX_t  - \oX_{\eta(t)}\|_{L^{2p r_2}(\Omega)} 
^{2p}.
\end{equation*}
We apply Lemma \ref{lem:Schememoments} and Lemma \ref{lem:local_error_prev}
to control $\|\oX_t- \oX_{\eta(t)}\|_{L^{2 p r_2}} \leq C \Dt^{\frac12}$ and $\sup_{s\in[0,T]}\|\oX_{s}\|_{L^{2p (\alphab -1)  \frac{r_2}{r_2-1}}}\leq C$, implying to satisfy the combined constraint that 
\[\ind_{\{2\alpha-1\}}(\alphab)\, p  \ \Big(r_2 \alphab \vee [\tfrac{r_2}{r_2-1}  (\alphab-1)]\Big) \leq \frac{1}{2} +\frac{B_2}{\diffc^2}, 
\]
leading to the well balanced  choice of $r_2 = 1 + \frac{\alphab-1}{\alphab}$, and the constraint  $\ind_{\{2\alpha-1\}}(\alphab)\, ( 4  \alpha -3) p \leq \frac12+\frac{B_2}{\diffc^2}$ in \eqref{cond:lemma_cont} covered by the condition \eqref{cond:lemma_discont}. Then 
\begin{equation*}
\EE[|A_1|] \leq C \EE[ Y_t^{2p} ] +  C  \Dt^p.
\end{equation*}
When $b$ is continuous,  $A = A_1$ and the latter estimation gives \eqref{eq:cont_contrib} under the condition \eqref{cond:lemma_cont}. 

\subsubsection*{Bernstein inequality for the bound of $\EE|A_{4}|$. } 
From \ref{H:polygrowth},  we have the following bound  
\begin{equation*}
\begin{aligned}
& \oX_t\left( \frac{b(\oX_t)-b(0)}{\oX_t} - \frac{b(\oX_{\eta(t)})-b(0)}{\oX_{\eta(t)}} \right)
\ind_{\{\oX_{[\eta(t), \eta(t) + \Dt]}\in B(\pntDisc_m ,\uu+f(\Dt) ) \}} \ind_{\{\sup_{s\in[\eta(t), \eta(t) + \Dt)}| \oX_{s}-\oX_{\eta(t)}| \geq f(\Dt) \}}\nonumber\\
& \qquad \leq C  
\ind_{\left\{\oX_{[\eta(t), \eta(t) + \Dt]}\in B(\pntDisc_m ,\uu+f(\Dt) ) \right\}}  \ind_{\left\{\sup_{s\in[\eta(t), \eta(t) + \Dt)}| \oX_{s}-\oX_{\eta(t)}| \geq f(\Dt)\right \}}, 
\end{aligned}
\end{equation*}
where $C$ above is a polynomial on $\pntDisc_m^{\alphab}$.  So 
\begin{equation}\label{eq:G11_G122}
    |A_{4}| \leq C  |Y_t|^{2p-1} 
\ind_{\{\oX_{[\eta(t), \eta(t) + \Dt]}\in B(\pntDisc_m ,\uu+f(\Dt) ) \}}  \ind_{\{\sup_{s\in[\eta(t), \eta(t) + \Dt)}| \oX_{s}-\oX_{\eta(t)}| \geq f(\Dt) \}}.
\end{equation}
Using the definition of $\sexp$ in \eqref{eq:Xbar_coeff}, we denote by $M$ the martingale
\[ M_t = \int_0^t \ind_{\{\oX_{[\eta(s), s)}\in B(\pntDisc_m ,\uu+f(\Dt) ) \}} \sexp(\oX_s, \oX_{\eta(s)}) dW_s,\]
which from \ref{H:diffusion_deriv} satisfies, for all $t\leq T$ 
\begin{equation*}
\av{M,M}_t \leq \diffc^2 \int_0^t \EE[ \ind_{\{\oX_{[\eta(s), s)}\in B(\pntDisc_m ,\uu+f(\Dt) ) \}} \ \oX_{\eta(s)}^{2(\alpha-1)} \oX_s^{2}] \ ds \leq  c^2 \ t,
\end{equation*}
with $c= \diffc  (\tfrac{3}{2} \pntDisc_m)^{\alpha} $.
Then, we have
\begin{equation*}
\begin{aligned}
& \left\{\sup_{s\in[\eta(t), \eta(t) + \Dt)}| \oX_{s}-\oX_{\eta(t)}| \geq f(\Dt) \right\} \  \cap \  \left\{\oX_{[\eta(t), \eta(t) + \Dt]}\in B(\pntDisc_m ,\uu+f(\Dt) ) \right\} \\
& \subset \left\{\sup_{s\in[\eta(t), \eta(t) + \Dt)} |\int_{\eta(t)}^s \ind_{\{\oX_{[\eta(t), \theta)}\in B(\pntDisc_m ,\uu+f(\Dt) ) \}} \bexp(\oX_\theta,\oX_{\eta(t)})\  d\theta + M_s - M_{\eta(s)}| \geq  f(\Dt)\right \} \\
& \quad \subset  \left \{\sup_{s\in[\eta(t), \eta(t) + \Dt)} |M_s - M_{\eta(t)}| \geq f(\Dt)  - \Dt \sup_{B(\pntDisc_m ,\uu+f(\Dt) )}|\bexp|\right \} \\
& \quad  \subset  \left \{ \sup_{s\in[\eta(t), \eta(t) + \Dt)} |M_s - M_{\eta(s)}| \geq \frac{1}{2} f(\Dt) \right\},
\end{aligned}
\end{equation*}
provided that $\sup_{x,y\in B(\pntDisc_m,\uu+f(\Dt))} |b^{exp}(x,y)| \leq \frac12 f(\Dt)/ \Dt$, in addition to the previous condition $f(\Dt)\leq\uu$. 
We now use the Bernstein inequality (see, e.g., Revuz and Yor~\cite[Ex.3.16,Chap.IV, p. 153]{Revuz_Yor_1999}):
\begin{equation*}
\PP\left( \sup_{t_k \leq s \leq t_{k+1}} |M_s - M_{\eta(s)}| \geq \frac{f(\Dt) }{2\Dt} \Dt \right)  \leq \exp\left(-\dfrac{1}{c^2}\dfrac{f^2(\Dt)}{8 \Dt}\right).
\end{equation*}
Thus, taking expectation and applying Young inequality in \eqref{eq:G11_G122}, we can use the estimation above as follows:
\begin{equation*}
\begin{aligned}
\EE[|A_{4}|] 
&\leq  C \ \EE[Y_t^{2p-1} 
  \ind_{\{\sup_{s\in[\eta(t), \eta(t) + \Dt)}| M_{s}-M_{\eta(t)}| \geq \frac{f(\Dt)}{2}\}} ] \\
&\leq C\ \EE[Y_t^{2p}] + C\ \left(\PP\left( \sup_{t_k \leq s \leq t_{k+1}} |M_s - M_{\eta(s)}| \geq \frac{f(\Dt) }{2\Dt} \Dt \right) \right)^{\frac1{2p}}\\
&\leq C\  \EE[Y_t^{2p}] +  C \ \exp\left(-\dfrac{1}{c^2}\dfrac{f^2(\Dt)}{16p \Dt}\right).
\end{aligned}
\end{equation*}
Considering $f(\Dt) : = 4c \ \Dt^{1/2-\epsilon_1}$, for some $\epsilon_1\in(0,\frac12)$, we get:
\begin{equation*}
\EE[|A_{4}|] \leq C\  \EE[Y_t^{2p}] +  C \ \exp\left(-\dfrac{1}{p}\Dt^{-2\epsilon_1}\right).
\end{equation*}
Notice that, the map $y \mapsto \exp( \frac{1}{p^2}\frac{2\epsilon_1}{1 - 2\epsilon_1} y) - y$ hits zero, its minimum value, at $y^* = \log (p^2 \frac{(1 -2 \epsilon_1)}{2 \epsilon_1})$. So, for a given $p$ and $\epsilon_1$, if $\Dt$ is chosen smaller than $\exp( - \frac{1}{2\epsilon_1} \log (\log( p^2 \frac{(1 -2 \epsilon_1)}{2 \epsilon_1})))$, then $\Dt^{-2\epsilon_1}>y^*$ and the map is increasing, leading to the inequality
\begin{equation*}
\exp\left(- \frac{1}{ p }\Dt^{-2\epsilon_1}\right) \leq  \Dt^{p-2\epsilon_1 p }.
\end{equation*}

Putting all the constraint on $\Dt$ together, a sufficient condition on $\Dt$ is that $\Dt\leq \Delta(\epsilon_1)$, with 
\begin{equation}\label{eq:def_delta_epsilon}\Delta(\epsilon_1):=\exp( - \frac{1}{2\epsilon_1} \log (\log(  \tfrac{1}{2 \epsilon_1}))) \ \wedge \ \left\{\frac{1}{16\diffc}(\tfrac{2}{3})^\alpha \pntDisc_m^{1 - \alpha} \right\}^{\tfrac{2}{1-2\epsilon_1}}  \wedge \  \left\{\tfrac{8}{3} \frac{\diffc}{L_G}  (\tfrac{3}{2}\pntDisc_m)^{\alpha - \alphab}\right\}^{\tfrac{2}{1 + 2\epsilon_1}} \wedge \ \frac{\pntDisc_m}{4 b(0)},
\end{equation}
using $f(\Dt)\leq \pntDisc_m/4$ to impose    
$\sup_{B(\pntDisc_m,\uu+f(\Dt))}|b^{\exp}|\leq 3 L_G (\tfrac{3}{2}\pntDisc_m^\alphab \vee 1)\leq \tfrac{1}{2}{f(\Dt)}/{\Dt}$, as required. In this case we obtain the estimation:
\begin{equation*}
\EE[|A_{4}|] \leq C\  \EE[Y_t^{2p}] +  C \ \Dt^{p - 2\epsilon_1 p }.
\end{equation*}

\subsubsection*{Markov inequality argument for the bound of $\EE|A_{2}|$. } 
Applying Young inequality with \ref{H:polygrowth}:
\begin{equation*}
\begin{aligned}
A_2 = & 2 Y_t^{2p-1}  \oX_t\left( \frac{b(\oX_t)-b(0)}{\oX_t} - \frac{b(\oX_{\eta(t)})-b(0)}{\oX_{\eta(t)}} \right)  \ind_{\{\sup_{s\in[\eta(t), \eta(t) + \Dt)}| \oX_{s}-\oX_{\eta(t)}|  \geq  \uu \}} \\
& \leq C  |Y_t|^{2p-1} \left(1+ \oX_{t}^{\alphab} + \oX_{\eta(t)}^{\alphab}  \right) \ind_{\{\sup_{s\in[\eta(t), \eta(t) + \Dt)}| \oX_{s}-\oX_{\eta(t)}| \geq  \uu\} }\\
& \leq C  |Y_t|^{2p} + C \left(1+ \oX_{t}^{2p \alphab} + \oX_{\eta(t)}^{2p \alphab}  \right) \ind_{\{\sup_{s\in[\eta(t), \eta(t) + \Dt)}| \oX_{s}-\oX_{\eta(t)}| \geq  \uu \}}. 
\end{aligned}
\end{equation*}
For some constant exponent $r_3>1$, 
\begin{equation*}
\begin{aligned}
&\EE\Big[\left(1+\oX_{t}^{2p \alphab} + \oX_{\eta(t)}^{2p \alphab}  \right) \ind_{\{\sup_{s\in[\eta(t), \eta(t) + \Dt)}| \oX_{s}-\oX_{\eta(t)}| \geq  \uu \}}\Big] \\
&\qquad \leq 
\Big(1+\sup_{s\leq T} \EE^{\frac{r_3-1}{r_3}}[\oX_{s}^{2p\alphab \frac{r_3}{r_3-1}}]\Big) \PP^\frac{1}{r_3}\left(\sup_{s\in[\eta(t), \eta(t) + \Dt)}| \oX_{s}-\oX_{\eta(t)}| \geq  \uu \right). 
\end{aligned}
\end{equation*}
By the Markov inequality combined with the local error in Lemma \ref{lem:local_error_prev}, for $\gamma=1$, $\epsilon_2\in(0,\tfrac12)$ and $p$ substitute by $2pr_3$:
\begin{equation*}
\PP^\frac{1}{r_3}\left(\sup_{s\in[\eta(t), \eta(t) + \Dt)}| \oX_{s}-\oX_{\eta(t)}| \geq  \uu \right)  
\leq \EE^\frac{1}{r_3}\left[\sup_{s\in[\eta(t), \eta(t) + \Dt)}| \oX_{s}-\oX_{\eta(t)}|^{2p r_3} \right]  \leq C \ \Dt^{p - 2\epsilon_2 p }. 
\end{equation*}
Notice that, in order to get $\|\sup_{\eta(s) \leq \theta \leq s}  |\oX_\theta- \oX_{\eta(s)}|\|_{L^{2 p r_3}(\Omega)}\leq C \Dt^{{\frac12}-\epsilon_2}$ from Lemma \ref{lem:local_error_prev}  and the upperbound of $\sup_{s\leq T}\|\oX_{s}\|_{L^{2p \alphab \frac{r_3}{r_3-1}}(\Omega)}$ from Lemma \ref{lem:Schememoments},   we need to assume the combined constraint that 
\begin{equation*}
\ind_{\{2\alpha-1\}}(\alphab)\, p \alphab \left(r_3 \frac{p+\Dt^{2p\epsilon_2}}{p} \vee \frac{r_3}{r_3-1}\right)  \leq \frac{1}{2} +\frac{B_2}{\diffc^2},
\end{equation*}
that coincides with the condition \eqref{cond:lemma_discont} with   $r_3 = 1 + \frac{p}{p+\Dt^{2p\epsilon_2}}$  and $\varepsilon=\epsilon_2$. 
Then, the contribution of this terms is of the form
\begin{equation*}
\EE| A_2| \leq C \EE[ Y_t^{2p}] + C \Dt^{p-2\epsilon_2 p }.
\end{equation*}
The proof is completed by joining the four bounds for $\EE|A_i|$ together and choosing $\varepsilon = \epsilon_1 = \epsilon_2$ and $f(\Dt) = 4 \diffc  (\tfrac{3}{2} \pntDisc_m)^{\alpha} \Dt^{1/2 - \varepsilon}$.
\end{proof}

\subsection{Proof of Theorem \ref{thm:strong_rate_stopped}} \label{sec:proof_under_H5}
In this proof and as before,  the positive constant $C$ may  change from line to line. It depends on $T$, $p$ and all the parameters in the hypotheses, but not on $\Dt$ and $\epsilon$.

To simplify the presentation, we consider in \ref{H:pieceloclip} only one point of discontinuity $\pntDisc_m$ for $b$, the other cases being a sum of similar contributions. 

From the five drift contributions $T_i$ in \eqref{eq:error_gamma_proto}, the two lasts $T_4$ and $T_5$ are residual. They can be bounded in a similar manner, with the help of the minoration \eqref{eq:scheme_minoration}, Young and Cauchy Schawrz inequalities. Note also that they  vanish with $b(0)$.   More precisely using \ref{H:polygrowth}, 
\begin{equation*}
\begin{aligned}
T_5 &  \leq  \diffc^2\frac{p-1}{p}\Dt\, Y_t^{2p} + C \  \Dt^{p} \ \oX_{\eta(t)}^{2p(\alpha-1)},\\
T_4 & \leq 2p b(0)\ \dt \   |Y_t|^{2p-1}  \ \Lg (\oX_{\eta(t)}^{\alphab-1} \vee 1) \leq \frac{2p-1}{2p}\sqrt{\Dt}\  Y_t^{2p} + C \  \Dt^{p}\ (1+\oX_{\eta(t)}^{2p(\alphab-1)}),
\end{aligned}
\end{equation*}
for some constant $C$ depending on $b(0)$, $\sigma$ and $p$.  In the above inequalities,  $\sup_{t\in[0,T]}(\EE[\oX_{t}^{2p(\alpha-1)}]+ \EE[\oX_{t}^{2p(\alphab-1)}])$ is  finite under \eqref{eq:condition_param_main_thm_1_1} or \eqref{eq:condition_param_thm_main_b_continuous_1}.  Therefore,
\begin{equation}\label{eq:estimation_T45}
\EE[|T_4| +\EE[|T_5|] \leq  \left\{ \left(\diffc^2\tfrac{p-1}{p} + \tfrac{2p-1}{2p}\right)\sqrt{\Dt} \ \EE[Y_t^{2p} ]   + 	C\, \Dt^{p }  \right\}\ind_{\{{b(0)} >  0\}}.
\end{equation}

The main idea of this proof is to make use of the control of some exponential moments of $\int_0^t X_s^{\alphab -1} ds$ and $\int_0^{t}\oX_{\texp\wedge\eta(s)}^{\alphab -1} ds$ given in  Propositions \ref{prop:XMoments} and  \ref{prop:schem_ExpoMoment} by the mean of a change in time formula in the dynamics of the error process $Y$. 

For the stopped process $\oX_{\cdot \wedge \texp}$, this exponential control is unconstrained on $\beta$, whereas it induces an additional constraint on $B_2$  for  the exact process $X$. Thus, a part of the game  consists in tracking the terms containing the factor $Y_t^{2p} X_t^{\alphab -1}$, and upper-bound whenever is possible, in favour of terms containing $Y_t^{2p} \oX_{\eta(t)}^{\alphab -1}$.

We first state the intermediate bound for the  sum $T_1+ T_2 + T_3$ in \eqref{eq:error_gamma_proto}.  
We consider first the term $T_3=2 p (2p -1) Y_t^{2p-2}   \oX_t^{2} \left[\frac{\sigma(X_t)}{X_t}-\frac{\sigma(\oX_{\eta(t)})}{\oX_{\eta(t)}}\right]^2$, exchanging $\oX_t$ with $X_t$, 
\begin{equation*}
T_3 \leq 4p(2p -1) \diffc^2 Y_t^{2p}  [X_t^{\alpha-1}\vee\oX_{\eta(t)}^{\alpha-1}]^2 + 4 p (2p -1) Y_t^{2p-2} (X_t-\oX_{\eta(t)})^2\left[\tfrac{X_t}{X_t - \oX_{\eta(t)}}\left(\tfrac{\sigma(X_t)}{X_t} - \tfrac{\sigma(\oX_{\eta(t)})}{\oX_{\eta(t)}} \right)\right]^2.
\end{equation*}
From Lemma \ref{lem:estimation_from_Hquatre}-\text{($\sigma$)},  $ \left[\frac{X_t}{X_t - \oX_{\eta(t)}}\left(\frac{\sigma(X_t)}{X_t} - \frac{\sigma(\oX_{\eta(t)})}{\oX_{\eta(t)}} \right)\right]^2 \leq 2((\alpha\diffcd)^2 + \diffc^2)(X_t\vee \oX_{\eta(t)})^{2(\alpha-1)}$. So, with Young and Cauchy-Schwarz inequalities, 
\begin{equation*}
\begin{aligned}
\EE[T_3 \ind_{\{t\leq \texp\}}] &\leq 4p(2p -1) (5\diffc^2 + 4(\alpha\diffcd)^2) \EE[Y_t^{2p}  (X_t\vee\oX_{\eta(t)})^{2(\alpha-1)}\ind_{\{t\leq \texp\}}]\\
&\qquad + 16 p (2p -1)((\alpha\diffcd)^2 + \diffc^2)\EE\left[Y_t^{2p-2}(\oX_t-\oX_{\eta(t)})^{2} (X_t\vee \oX_{\eta(t)})^{2(\alpha-1)}\ind_{\{t\leq \texp\}}\right]\\
&\leq C\ \EE[Y_{t\wedge\texp}^{2p}]+ 4p(2p -1) (5\diffc^2 + 4(\alpha\diffcd)^2) \EE[Y_{t\wedge\texp}^{2p}  (X_{t\wedge\texp}\vee\oX_{\eta({t\wedge\texp})})^{2(\alpha-1)}] \\
&\qquad + C\ \EE^{1/2}\left[(\oX_{t\wedge\texp}-\oX_{\eta({t\wedge\texp})})^{4p} \right]\EE^{1/2}\left[(X_{t\wedge\texp}\vee \oX_{\eta({t\wedge\texp})})^{4(\alpha-1)p}\right].
\end{aligned}
\end{equation*}

With the assumption that $\ind_{\{2\alpha-1\}}(\alphab)\, 2p (2\alpha-1) \leq \tfrac12 + \tfrac{B_2}{\diffc^2}$, which is covered by  \eqref{eq:condition_param_main_thm_1_1}, 
the control of the $L^{4p}$-local error is obtained from   Proposition \ref{prop:local_error}. This hypothesis also allowed to apply Proposition \ref{prop:XMoments}  and Lemma \ref{lem:Schememoments} for the control of the   $4(\alpha-1)p$-moments. We  obtain the estimation:
\begin{equation*}
\EE[T_3\ind_{\{t\leq \texp\}}] \leq    C \EE[Y_{t\wedge\texp}^{2p}]  +  4p(2p -1) (5\diffc^2 + 4(\alpha\diffcd)^2) \EE[Y_{t\wedge\texp}^{2p}  (X_{t\wedge\texp}\vee\oX_{\eta({t\wedge\texp})})^{2(\alpha-1)}] + C \Dt^p. 
\end{equation*}

Next, we consider $T_2 (t) 
= 2p Y_t^{2p-1}  \oX_t   \left( \frac{b(X_t)-b(0)}{X_t} - \frac{b(\oX_{\eta(t)})-b(0)}{\oX_{\eta(t)}} \right)$.   In view of Lemma \ref{lem:disc_contrib}, we decompose $T_2$ as follows, using the notation $A$ in \eqref{eq:aux_A}, 
\begin{equation*}
T_2 (t) = - 2p Y_t^{2p}   \frac{b(X_t)-b(0)}{X_t} + \  A  \ +  2p Y_t^{2p-1}  \left( b(X_t) - b(\oX_t) \right).  
\end{equation*}
So, with $T_1 = 2pY_t^{2p} ( \frac{b(X_t)-b(0)}{X_t} + 2(2p-1)\frac{\sigma^2(X_t)}{X_t^2})$, we have 
\begin{equation*}
T_1 (t) + T_2 (t) 
=  4 p (2p-1) Y_t^{2p}\frac{\sigma^2(X_t)}{X_t^2}  + \  A  \ +  2p Y_t^{2p}  \left( \frac{ b(X_t) - b(\oX_t) }{X_t - \oX_t}\right). 
\end{equation*}
Applying \ref{H:polygrowth} and \ref{H:control}, we obtain 
\begin{equation*}
T_1 (t) + T_2 (t) 
\leq  4 p (2p-1) Y_t^{2p} \diffc^2 X_t^{2(\alpha -1)}   + \  A  \ +  2p Y_t^{2p} \ \osLip  ( 1 + X_t^{\alphab -1} \vee \oX_t^{\alphab -1}). 
\end{equation*}
It remains to apply Lemma \ref{lem:disc_contrib} for the term $A$,  
accompanied with the constraint that $\Dt\leq \Delta(\varepsilon)$, for $\Delta(\varepsilon)$ in \eqref{eq:def_final_delta_epsilon},  and $\varepsilon\in(0,\tfrac12)$, while the condition \eqref{cond:lemma_discont} is covered by   \eqref{eq:condition_param_main_thm_1_1}  (and \eqref{cond:lemma_cont} by  \eqref{eq:condition_param_thm_main_b_continuous_1} when $b$ is continuous).

Adding the estimation of $T_3$ above and  \eqref{eq:estimation_T45} for $T_4 + T_5$, we can summarize the bound  we get as 
\begin{equation}
\begin{aligned}
\EE[\sum_{i=1}^5 T_i(t)] 
& \leq \EE[\ \KK_p(X_t,\oX_{\eta(t)}) \ Y^{2p}_t] +  C \Dt^{p-2\varepsilon p } +  C \EE\left[ |Y_t |^{2p-1}   
\ind_{  \left\{\oX_{[\eta(t), \eta(t) + \Dt]}\in\BB(\Dt^{1/2 - \varepsilon}) \right\}} \right], 
\end{aligned}
\end{equation}
with, from the application of Lemma \ref{lem:disc_contrib},  $\BB(\Dt^{1/2 - \varepsilon}) =  B(\pntDisc_m, 8\diffc  (\frac{3}{2} \pntDisc_m)^{\alpha}\ \Dt^{1/2-\varepsilon} )$,  and 
\begin{equation*}
\begin{aligned}
\KK_p(x,y) & = 
C 
+ 2p \KK_\star (y\vee x)^{\alphab-1}, \quad \text{with ~} \mathcal{K}_\star  : = \osLip +  2(2p -1) (6\diffc^2 + 4(\alpha\diffcd)^2).
\end{aligned}
\end{equation*}
We bound $ |Y_t|^{2p-1} \ind_{  \left\{\oX_{[\eta(t), \eta(t) + \Dt]}\in \BB(\Dt^{1/2 - \varepsilon})\right\}}$ as follows, discussing whenever $|Y_t| > \Dt^{\frac{1}{2} -\varepsilon}$ or not: 
\begin{equation*}
\begin{aligned}
&|Y_t|^{2p-1} \ind_{\{ \oX_{[\eta(t), \eta(t) + \Dt]} \in \BB(\Dt^{1/2 - \varepsilon})\}}  \\
& \leq \Dt^{(\frac{1}{2} -\varepsilon)(2p-1)} \ \ind_{\{ \oX_{[\eta(t), \eta(t) + \Dt]} \in \BB(\Dt^{1/2 - \varepsilon})\}}  
+ |Y_t|^{2p} \Dt^{-(\frac{1}{2} -\varepsilon)} \ind_{\{ \oX_{[\eta(t), \eta(t) + \Dt]} \in \BB(\Dt^{1/2 - \varepsilon})\}}.
\end{aligned}
\end{equation*}
From Lemma \ref{lem:occupation_time} applied  with the norm $L^1(\Omega)$ to the first term of the above inequality and the estimation of $\EE[\sum_{i=1}^5 T_i(t)] $,  we obtain 
\begin{equation*}
\EE[Y_{t\wedge\texp}^{2p}]
 \leq \EE\Big[\int_0^{t\wedge\texp}\widetilde{\KK}_p(X_s, \oX_{[\eta(s), \eta(s) + \Dt]}) \  Y_s^{2p}ds\Big] + C  \ \Dt^{p-2\varepsilon p },
\end{equation*}
with 
\begin{equation*}
\widetilde{\KK}_p(X_t, \oX_{[\eta(t), \eta(t) + \Dt]}) = \KK_p(X_t,\oX_t) + C \  \Dt^{-(\frac{1}{2} -\varepsilon)} \ind_{\{ \oX_{[\eta(t), \eta(t) + \Dt]} \in \BB( \Dt^{1/2 - \varepsilon} )\}}.
\end{equation*}
In order to apply the  Gronwall Lemma in presence of the  stochastic coefficient $\widetilde{\KK}_p(X_s, \oX_{[\eta(s), \eta(s) + \Dt]})$, we define now the stopping time 
$$\tau_\lambda = \inf\{t\geq0:~ {\bf\textrm H}(t)  \geq \lambda\}\wedge T \wedge \texp, $$
such that  ${\bf\textrm H}(\tau_\lambda) = \lambda$, when ${\bf\textrm H}^{-1}(\lambda) \leq T\wedge\texp$,  with $t\mapsto {\bf\textrm H}(t)$ defined as 
\begin{equation}\label{eq:def_h_general}
{\bf\textrm H}(t) =   \int_0^t  \widetilde{\KK}_p(X_s, \oX_{[\eta(s), \eta(s) + \Dt]})  \, ds. 
\end{equation}
For any $\lambda>0$, through the change of time $u = {\bf\textrm H}(s)$,  $\EE\left[\int_0^{\tau_\lambda}   Y_{s}^{2p} d {\bf\textrm H}(s) \right] =\EE\left[\int_0^{\lambda}  Y_{\tau_u}^{2p}  du\right]$, and we obtain 
$$ \EE[ Y_{\tau_{\lambda}}^{2p} ] \leq \EE\left[\int_0^{\lambda}
  Y_{\tau_u}^{2p}  du\right]  +  C  \Dt^{p-2\varepsilon p }. 
 $$
From Gronwall inequality we derive the first estimation
\begin{equation}\label{eq:estimatio_Y_tau}
\begin{aligned}
\EE[Y_{\tau_\lambda}^{2p}]&\leq   C \  \Dt^{p-2\varepsilon p}   \  \exp\{ \lambda\}.
\end{aligned}
\end{equation}
Now, choosing $\lambda = {\bf\textrm H}({T\wedge \texp})$,
\begin{equation*}
\begin{aligned}
\EE[Y_{T\wedge \texp}^{2p}] \leq 
\EE\left[\int_0^{{\bf\textrm H}({T\wedge \texp})}
  Y_{\tau_u}^{2p}  du\right]  +   C  \Dt^{p-2\varepsilon p }. 
\end{aligned}
\end{equation*}
It remains to bound the first term in the right-hand side above. Applying H\"older inequality with exponent $\frac{p+1}{p}$, 
\begin{equation*}
\EE\left[\int_0^{{\bf\textrm H}({T\wedge \texp})}
  Y_{\tau_u}^{2p}  du\right] \leq 
\int_0^{+\infty}
\left(\PP({\bf\textrm H}({T\wedge \texp})\geq u)\right)^{\tfrac{1}{p+1}}~\EE^{\tfrac{p}{p+1}}\left[Y_{\tau_u}^{2(p+1)}\right] du.
\end{equation*}
By using the preliminary upper bound \eqref{eq:estimatio_Y_tau}  (increasing the sufficient condition  in Lemma \ref{lem:disc_contrib} to get this upper bound for $\EE[Y_{\tau_u}^{2(p+1)}]$, with $\ind_{\{2\alpha-1\}}(\alphab)\, 2(p+1 +\Dt^{2 (p+1) \varepsilon} )\ (2\alpha-1) \leq \tfrac12 + \tfrac{B_2}{\diffc^2}$), we get 
\begin{equation*}
\EE^{\tfrac{p}{p+1}}\left[Y_{\tau_u}^{2(p+1)}\right] \leq 
C \ \Dt^{p -2\varepsilon p }  \ \exp\Big\{ u \frac{p}{p+1}\Big\}.
\end{equation*}
Next, applying Markov's inequality, we observe that, for any $\mu>0$, 
$$\PP({\bf\textrm H}({T\wedge \texp})\geq u) \leq \EE[\exp\{\mu \ {\bf\textrm H\, }({T\wedge \texp})\}] \ \exp(-\mu u).$$ 
Thus, considering $\mu > p$, we obtain
\begin{equation*}
\begin{aligned}
\EE\left[\int_0^{{\bf\textrm H}({T\wedge \texp})}
  Y_{\tau_u}^{2p}  du\right] 
&\leq C \ \Dt^{p -2\varepsilon p } \  
\EE^{\frac{1}{p+1}}[\exp\{\mu \ {\bf\textrm H\, }({T\wedge \texp})\}]  
\int_0^{+\infty}
\exp\left\{\frac{u \ p}{p+1} - \mu \frac{u}{p+1}\right\}   du\\
&\quad =  C \ \Dt^{p -2\varepsilon p} \   \left(\frac{p+1}{\mu -p}\right)  \ 
\EE^{\frac{1}{p+1}}[\exp\{\mu \ {\bf\textrm H\, }({T\wedge \texp})\}], 
\end{aligned}
\end{equation*}
and therefore, for an updated constant $C$, for all $t\in[0,T]$, 
\begin{equation}\label{eq:strng_error_bound}
\EE[Y_{t\wedge\texp}^{2p}]
\leq C \ \Dt^{p -2\varepsilon p } \left( 1 +  \left(\frac{p+1}{\mu -p}\right)  \ 
\EE^{\frac{1}{p+1}}\Big[\exp\Big\{\mu \ {\bf\textrm H\, }({T\wedge \texp})\Big\}\Big]\right). 
\end{equation}
Applying the H\"older inequality with exponent $\epsilon_0>1$, 
\begin{equation*}
\begin{aligned}
& \EE[\exp\{\mu {\bf\textrm H\, }({T\wedge \texp})\}] \\
& \quad \leq C \ \EE\left[\exp\left\{\mu\, 2p \mathcal{K}_\star  \int_0^{T\wedge \texp} (\oX_{\eta(s)}\vee X_s)^{\alphab-1} ds  + \mu\int_0^{T\wedge \texp} \Dt^{-(\frac{1}{2} -\varepsilon)}\ind_{\{ \oX_{[\eta(t), \eta(t) + \Dt]} \in \BB(\Dt^{1/2 - \varepsilon} )\}} ds 
\right\}\right] \\
& \quad \leq C\, \EE^{\tfrac1{\epsilon_0}}\left[\exp\left\{\mu\epsilon_0 \ 2p \mathcal{K}_\star \int_0^{T\wedge \texp}(\oX_{\eta(s)}\vee X_s)^{\alphab-1}ds\right\}\right] \\
 &\qquad\qquad \times \  \EE^{\frac{\epsilon_0-1}{\epsilon_0}}\left[\exp\left\{ \mu\tfrac{\epsilon_0}{\epsilon_0-1} \int_0^{T\wedge \texp} \Dt^{-(\frac{1}{2} -\varepsilon)} \ind_{\{ \oX_{[\eta(t), \eta(t) + \Dt]} \in \BB(\Dt^{1/2 - \varepsilon} )\}} ds 
\right\}\right]. 
\end{aligned}
\end{equation*}
From Lemma \ref{lem:occupation_time},  we derive the following bound for the second exponential term: 
\begin{equation*}
\EE \left[ \exp\left\{\mu\tfrac{\epsilon_0}{\epsilon_0-1} \Dt^{-(\frac{1}{2} -\varepsilon)} \int_0^t\ind_{\{ \oX_{[\eta(s), \eta(s) + \Dt]} \in \BB(\Dt^{1/2 - \varepsilon})\}} ds \right\} \right]  \leq \exp\left\{C\,\mu\tfrac{\epsilon_0}{\epsilon_0-1} \right\}.
\end{equation*}
It remains to apply Proposition \ref{prop:schem_ExpoMoment} for the first term, provided that $\mu>p$ and $\epsilon_0>1$.  
Taking $\epsilon_0 = 1+\tfrac1p$ and $\mu = p+1$,  imposing 
$$2(p+1)^2  \mathcal{K}_\star < B_2,\quad\mbox{when }(b(0),\alpha)\notin (0,\tfrac{\diffc^2}2)\times[1,\tfrac32],$$
we obtain 
\begin{equation*}
\sup_{t\in[0,T]}\EE[Y_{t\wedge\texp}^{2p}]\leq C~\Dt^{p - 2\varepsilon p },
\end{equation*}
for any $\varepsilon\in(0,1/2)$ and $\Dt\leq \Delta(\varepsilon)$. 

Applying the same reasoning as in Remark \ref{rem:exponentialPrototype}-(ii), with the help of the Lenglart's inequality, we obtain 
\[\EE\left[\sup_{0\leq t\leq T}Y_{t\wedge\texp}^{p}\right]\leq C\,\Dt^{\frac{p}{2} (1 - 2\varepsilon)}. \]
Here we choose  $\frac{p}{\epsilon} = \frac{1}{2}$, in order to avoid further reinforcing the conditions of convergence. 

When $b$ is continuous, the proof is simpler. Repeating the steps, to apply Proposition \ref{prop:local_error} and Lemma \ref{lem:disc_contrib}, we need only that $\ind_{\{2\alpha-1\}}(\alphab) 2(p+1) (2\alpha -1)    \leq \tfrac12+\tfrac{B_2}{\diffc^2}$  to get 
\begin{equation*}
\EE[Y_{t\wedge\texp}^{2p}]
\leq \EE\Big[\int_0^{t\wedge\texp} \left( C 
+ 2p  \ \KK_\star  (X_s \vee  \oX_{\eta(s)})^{\alphab-1}\right) Y_s^{2p} ds \Big]  +  C \Dt^{p}.   
\end{equation*}
Proceeding as before with the change of time argument, we end with the expression \eqref{eq:strng_error_bound} when $\varepsilon=0$,  and 
\begin{equation*}
\begin{aligned}
\EE[\exp\{\mu {\bf\textrm H\, }({T\wedge \texp})\}]
\leq C \ \EE\left[\exp\left\{\mu\, 2p \  \mathcal{K}_\star  \int_0^{T\wedge \texp} (\oX_{\eta(s)}\vee X_s)^{\alphab-1} ds 
\right\}\right].
\end{aligned}
\end{equation*}
Taking  $\mu = p+1$,  imposing 
$2 p \ (p+1) \ \mathcal{K}_\star < B_2$, when $(b(0),\alpha)\notin (0,\tfrac{\diffc^2}2)\times[1,\tfrac32]$, we obtain 
\[\sup_{t\in[0,T]}\EE[Y_{t\wedge\texp}^{2p}]\leq C~\Dt^{p}.\]

As before, from Lenglart's inequality with $r = \frac{1}{2}$ and  under the same conditions, we get    
\[\EE\left[\sup_{0\leq t\leq T}Y_{t\wedge\texp}^{p}\right]\leq C\,\Dt^{p/2}.\]

\subsection{Strong convergence rate of the unstopped scheme} 

In exchange for more control over the drift behaviour, we extend the convergence result to the unstopped scheme. We also extend the parameter domain of  Theorem \ref{thm:strong_rate_stopped}  for which we prove the convergence rate.  
Similarly to the weak error analysis in \cite{BoJaMa2021}, we  introduce some smoothness on $b$ such that, at least for large values of $x$,   $b'(x)$ exists and inherits from the inward effect assumed in \ref{H:control}. In other words,  $b'(x)$ is bounded by a polynomial function of order $\alphab -1$, with negative main coefficient of large enough amplitude:
\begin{hypo}{\hspace{-0.15cm}\Blue{\bf  \textrm {Control on $b'$. (\ref{H:sch_control_b_deriv}).}}}
\makeatletter\def\@currentlabel{ {\bf\textrm H}$_{\mbox{\scriptsize\bf\textrm{Control\_$b'$}}}$}\makeatother
\label{H:sch_control_b_deriv} 
For the same  $\Lg$ constant in \ref{H:polygrowth}, there exist positive constants ${B_1^\prime}$ and a point $\pntCompact> \pntDisc_m$,   such that, for any $x\geq \pntCompact$ (so outside any discontinuity interval of $b$), 
\begin{equation}\label{cond:control_b'}
	b'(x) \leq {B_1^\prime} - \alphab \Lg\  x^{\alphab -1}. 
\end{equation}
\end{hypo}
A typical case where \ref{H:control} and \ref{H:sch_control_b_deriv} are together is when $b$ is a polynomial function, and so \ref{H:sch_control_b_deriv} is just the derivative of \ref{H:control}. However, it is easy to get out of this type of case, for example when the polynomial function  has oscillating coefficients (see Section \ref{sec:num}). 
Under this additional hypothesis, we prove the following convergence result, the proof  of  which is  detailed in Appendix \ref{sec:proof thm regular}. 

\begin{theorem}\label{thm:strong_rate_non_stopped}
Assume \ref{H:pieceloclip}, \ref{H:polygrowth}, \ref{H:control}, \ref{H:sch_control_b_deriv} and \ref{H:diffusion_deriv}. 
For all $0 < \varepsilon < \frac12$, 
there exists $0 < \Delta(\varepsilon) \leq 1$,  estimated with \eqref{eq:def_final_delta_epsilon},  such that for all $\Delta t \leq \Delta(\varepsilon)$,   for $p\geq1$ any integer satisfying 
\begin{align}
\ind_{\{2\alpha-1\}}(\alphab)\,\left(  \{( \ 2(p+1) + \Dt^{2 (p+1) \varepsilon}  ) \ (2 \alpha -1) \} \vee \{  2\ (2(p+1) -  \tfrac34) \} \ \vee \ \{2\alpha(p+1)\}\right)   \leq \frac12+\frac{B_2}{\diffc^2}  \label{eq:condition_thm_unstopped_1} \\
\quad\mbox{and}\quad
\ind_{\{2\alpha-1\}}(\alphab)\  4(2(p+1) -1) (\diffc^2 +  (\alpha\diffcd)^2)   \leq L_G \ ((\alphab -1) \wedge 1) .
\label{eq:condition_thm_unstopped_3}
\end{align}
there exists  some positive constant $C(p)$, independent of $\Dt$ and $\varepsilon$,   such that
\begin{equation*}
\sup_{t\in[0,T]} \left\|X_t-\oX_t \right\|_{L^{2p}(\Omega)} + \Big\|\sup_{t\in[0,T]} |X_t-\oX_t| \Big\|_{L^{p}(\Omega)} \leq C(p) \, \Dt^{\frac{1}{2} - \varepsilon}. 
\end{equation*}
Furthermore, when $b$ is continuous on $\RR^+$, 
replacing  \eqref{eq:condition_thm_unstopped_3} and  \eqref{eq:condition_thm_unstopped_1}   by the condition
\begin{equation}\label{eq:condition_thm_unstopped_continuous_1}
\ind_{\{2\alpha-1\}}(\alphab)\,\left(  \{ p \ (4 \alpha -3) \}\vee\{2\alpha p \} \vee \{  2\ (2p-  \tfrac34) \} \right)  \leq \frac12+\frac{B_2}{\diffc^2}, 
\end{equation}
the following  convergence takes place for  $\Dt \leq 1$: 
\begin{equation*}
\sup_{t\in[0,T]} \left\|X_t-\oX_t \right\|_{L^{2p
}(\Omega)} + \Big\|\sup_{t\in[0,T]} |X_t-\oX_t| \Big\|_{L^{p}(\Omega)} \leq C(p) \, \Dt^{\frac{1}{2}}. 
\end{equation*} 
\end{theorem}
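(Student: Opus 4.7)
The plan is to replay the argument of Theorem \ref{thm:strong_rate_stopped} with one crucial modification: the one-sided Lipschitz bound from \ref{H:control} applied to $(b(X_t) - b(\oX_t))/(X_t - \oX_t)$ in the $T_1 + T_2$ contribution is replaced by the finer bound derived from \ref{H:sch_control_b_deriv}. This new bound provides a strictly \emph{negative} coefficient on the highest polynomial power, which absorbs the positive powers produced by $T_3$ and removes the need for the stopping time $\texp$ and the change-of-time/stochastic Gronwall machinery. As a consequence, only a deterministic Gronwall inequality is invoked, and the unstopped scheme is handled directly.

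Concretely, starting from the It\^o decomposition \eqref{eq:error_gamma_proto}, I would bound the terms $T_3$, $T_4$, $T_5$ and the jump term $A$ (via Lemma \ref{lem:disc_contrib}) exactly as in Section \ref{sec:proof_under_H5}. For the drift contribution $2p\, Y_t^{2p} (b(X_t)-b(\oX_t))/(X_t-\oX_t)$ inside $T_1+T_2$, I would split according to the threshold $\pntCompact$ of \ref{H:sch_control_b_deriv}. On the event $\{X_t\wedge\oX_t\geq \pntCompact\}$, the convex combination $\theta X_t+(1-\theta)\oX_t$ lies in the region where \ref{H:sch_control_b_deriv} applies, so the fundamental theorem of calculus yields
\[\frac{b(X_t)-b(\oX_t)}{X_t-\oX_t}=\int_0^1 b'(\theta X_t+(1-\theta)\oX_t)\,d\theta\leq B_1'-\alphab L_G\,(X_t\wedge\oX_t)^{\alphab-1}.\]
On the complementary event $\{X_t\wedge\oX_t<\pntCompact\}$, either both processes are bounded by $\pntCompact$ (so the drift ratio is bounded by a constant by \ref{H:pieceloclip} or \ref{H:polygrowth}), or exactly one of them is large, in which case $|X_t-\oX_t|$ is comparable to the larger value and the ratio is again at most $C(1+(X_t\vee\oX_t)^{\alphab-1})$; the resulting term is then absorbed using Young's inequality together with the finite positive moments of $X$ and $\oX$ ensured by \eqref{eq:condition_thm_unstopped_1}.

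Combining with the $T_3$ contribution $+4p(2p-1)(5\diffc^2+4(\alpha\diffcd)^2)Y_t^{2p}(X_t\vee\oX_t)^{2(\alpha-1)}$ and using $2(\alpha-1)\leq \alphab-1$ (an immediate consequence of $\alphab\geq 2\alpha-1$), the condition \eqref{eq:condition_thm_unstopped_3} is calibrated precisely so that the net coefficient in front of $Y_t^{2p}(X_t\vee\oX_t)^{\alphab-1}$ is non-positive. Taking expectations and swallowing the negative term, what remains is
\[\frac{d}{dt}\EE[Y_t^{2p}]\leq C\,\EE[Y_t^{2p}]+C\,\Dt^{p(1-2\varepsilon)},\]
to which the deterministic Gronwall lemma applies, yielding $\sup_{t\in[0,T]}\EE[Y_t^{2p}]\leq C\,\Dt^{p(1-2\varepsilon)}$. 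The $L^p(\Omega)$-bound on $\sup_{t\in[0,T]}|Y_t|$ is then obtained from Lenglart's inequality (Lemma \ref{lem:Lenglart_sharp}) with ratio $r=1/2$ applied to $Y^{2(p+1)}$, which is exactly what the margin $2(2(p+1)-\tfrac34)\leq \tfrac12+\tfrac{B_2}{\diffc^2}$ in \eqref{eq:condition_thm_unstopped_1} is designed to accommodate. The continuous case follows by substituting Lemma \ref{lem:disc_contrib}'s sharper bound \eqref{eq:cont_contrib} (no $\varepsilon$-penalty) under \eqref{eq:condition_thm_unstopped_continuous_1}.

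The main obstacle is bookkeeping: carefully identifying the constants involved in the $T_3$-vs-$T_1+T_2$ absorption, in particular disentangling the $(X\vee\oX)^{2(\alpha-1)}$ and $(X\wedge\oX)^{\alphab-1}$ powers in the mixed regime where only one of $X_t,\oX_t$ exceeds $\pntCompact$, and verifying that the moment conditions packaged in \eqref{eq:condition_thm_unstopped_1} suffice for every Young/H\"older split used along the way. Beyond this, the proof is a strict simplification of the stopped-case proof, because all the machinery supporting the stopping (the change of time \eqref{eq:def_h_general}, the exponential-moment control from Proposition \ref{prop:schem_ExpoMoment}, and the final Markov-Laplace-in-$\lambda$ step) becomes superfluous.
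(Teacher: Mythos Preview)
Your plan has the right spirit---use \ref{H:sch_control_b_deriv} to extract a negative leading term and avoid the stochastic Gronwall machinery---but the absorption step as you describe it does not close. Bounding $(b(X_t)-b(\oX_t))/(X_t-\oX_t)$ via the fundamental theorem of calculus on $\{X_t\wedge\oX_t\geq\pntCompact\}$ yields a negative term of size $-\alphab L_G(X_t\wedge\oX_t)^{\alphab-1}$, whereas the $T_3$ bound from Section~\ref{sec:proof_under_H5} produces a positive term in $(X_t\vee\oX_{\eta(t)})^{2(\alpha-1)}$. The mismatch $\wedge$ versus $\vee$ is fatal: when $X_t\wedge\oX_t$ stays bounded and $X_t\vee\oX_{\eta(t)}\to\infty$ the positive part is not dominated, and your proposed fix for the mixed regime (Young plus finite moments) leads to higher powers of $Y_t$ rather than back to $\EE[Y_t^{2p}]$. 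The paper avoids this by a different decomposition \eqref{eq:proof_rate1_aux0}: instead of passing through $b(X_t)-b(\oX_t)$, it groups $T_1+T_2+T_3$ into $G_0+G_1+G_2$ where $G_0$ carries both the drift and diffusion corrections in the \emph{quotient} form $\frac{b(x)-b(0)}{x}$, $\frac{\sigma(x)}{x}$. Lemma~\ref{lem:estimation_from_Hquatre} then bounds the combination $\oX_t E_t$ so that the positive $\sigma$-part and the negative $b$-part both carry the common factor $\oX_t(X_t\vee\oX_t)^{\text{power}}$; under \eqref{eq:condition_thm_unstopped_3} the sum is $\leq 0$ pointwise, giving $G_0\leq 2pY_t^{2p}(B_1+\Cczero)$ with no residual polynomial weight.

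A second point: the change of time is not entirely superfluous in the discontinuous case. After Lemma~\ref{lem:disc_contrib} one is still left with the indicator term $\EE[|Y_t|^{2p-1}\ind_{\{\oX_{[\eta(t),\eta(t)+\Dt]}\in\BB\}}]$, and splitting it as in Section~\ref{sec:proof_under_H5} produces $Y_t^{2p}\Dt^{-(\frac12-\varepsilon)}\ind_{\{\cdot\}}$, a term with a large random coefficient that deterministic Gronwall cannot absorb. The paper does reuse the change-of-time step at the end of Appendix~\ref{sec:proof thm regular}, but with the much simpler ${\bf\textrm H}(t)=C\Dt^{-(\frac12-\varepsilon)}\int_0^t\ind_{\{\oX_{[\eta(s),\eta(s)+\Dt]}\in\BB\}}ds$, whose exponential moment is bounded directly by Lemma~\ref{lem:occupation_time}---so Proposition~\ref{prop:schem_ExpoMoment} and the stopping time $\texp$ are indeed no longer needed. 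In the continuous case your claim is correct: one reaches \eqref{eq:bound_g1_continuous} and plain Gronwall suffices.
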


\begin{rem}\label{rem:strong_versus_weak_condition}
The weak error analysis in \cite{BoJaMa2021} gives a convergence (with a rate one) whenever  $b$ in $C^4$, $\frac{B_2}{\diffc^2} + \frac{1}{2} > 5 \alpha + C$, where the positive constant $C$ depends on the polynomial decays of the successive derivatives of $b$. Moreover, some of the parameters were forbidden. Typically, when $b(0)>0$, the  theoretical weak convergence was proven  under the condition that $\alpha > 3/2$. 
With Theorem \ref{thm:strong_rate_non_stopped}, the quadratic error is going to zero (with rate 1/2) for a larger set of parameters, allowing all the values of $b(0)\geq 0$ for all the values of $\alpha>1$. 
\end{rem}

The following lemma, the proof of which is postponed in Appendix  \ref{sec:addproofs}, summarizes how we use the hypotheses \ref{H:sch_control_b_deriv} and \ref{H:diffusion_deriv} in order to derive the strong error.  Hypothesis \ref{H:sch_control_b_deriv} strengthens  \ref{H:control},  putting even more weight on the inward behaviour of the drift $b$. The factor $\alphab$ in front of $\Lg$ dramatically simplifies  the proof of this technical Lemma, by using the conservation of inequality by integration. On the other hand, hypothesis \ref{H:diffusion_deriv} maintains the polynomial growth of the map $x\mapsto \tfrac{\sigma(x)}x$ while keeping the driven power in terms of the parameter $\alpha$. 
\begin{lem}\label{lem:estimation_from_Hquatre}
\hspace{2em}
\begin{itemize}
\item[$(b)$.]
Assume \ref{H:pieceloclip}, \ref{H:polygrowth} and  \ref{H:sch_control_b_deriv}. 
For  all $x,y\in (0,+\infty)$, we have 
\end{itemize}
\begin{equation}\label{eq:estim_lemma_cases}
\begin{aligned}
& \frac{y}{x-y}\left(\frac{b(x)-b(0)}{x} - \frac{b(y)-b(0)}{y} \right) \leq 
\Cczero - \ \Lg ((\alphab -1) \wedge 1)  \ y \ (x\vee y ) ^{\alphab -2}, 
\end{aligned}
\end{equation}
and, with  $\pntCompact$ given in \ref{H:sch_control_b_deriv}, such that $\pntCompact>\pntDisc_m$, we  estimate the non-negative constant $\Cczero$ as
\begin{equation}\label{eq:constants_Ccercled}
\begin{aligned}
\Cczero & = 
\left( {B_1^\prime}\vee [\osLip(\pntCompact^{\alphab-1}+1)] + \Lg (\alphab \pntCompact^{\alphab-1}+1) \right)
\ \vee \ 
\big(\osLip +\Lg\big) ((\pntCompact^{\alphab-1}\vee 1)+1). 
\end{aligned}
\end{equation}
When  \eqref{cond:control_b'}-\ref{H:sch_control_b_deriv} is satisfied in the whole space $\RR^+$, then $b$ is locally Lipschitz on $\RR^+$, and   \eqref{eq:estim_lemma_cases} is  valid with the constant $\Cczero  = 
\left({{B_1^\prime}} + \Lg \right)$.
\begin{itemize}
\item[$(\sigma)$.] 
Assume \ref{H:diffusion_deriv}. Then for all $x,y$ in $\RR^+$
\begin{equation}\label{eq:estim_lemma_cases_sigma}
\frac{y}{(x-y)^2}\left(\frac{\sigma(x)}x - \frac{\sigma(y)}y\right)^2
\leq  2 ( \diffc^2 +  \alpha^2 \diffcd^2 ) 
\  (x\vee y)^{2\alpha -3}. 
\end{equation}
\end{itemize}
\end{lem}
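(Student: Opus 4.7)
My plan is to handle both parts via the auxiliary quotient functions $g(z):=\tfrac{b(z)-b(0)}{z}$ and $h(z):=\tfrac{\sigma(z)}{z}$. The left-hand side of \eqref{eq:estim_lemma_cases} equals $y(g(x)-g(y))/(x-y)$ and that of \eqref{eq:estim_lemma_cases_sigma} equals $y(h(x)-h(y))^2/(x-y)^2$, i.e.\ rescaled secant slopes. The whole strategy is therefore to control $g'$ and $h'$ via the polynomial hypotheses and integrate.

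\textbf{Part $(b)$.} First I would compute $g'(z)=b'(z)/z-(b(z)-b(0))/z^2$ on each continuity piece of $b$. Combining \ref{H:sch_control_b_deriv} (giving $b'(z)/z\leq B_1^\prime/z-\alphab L_G z^{\alphab-2}$ for $z\geq\pntCompact$) with the bound $(b(z)-b(0))/z^2\geq-L_G z^{\alphab-2}$ valid for $z\geq 1$ via \ref{H:polygrowth} would produce $g'(z)\leq B_1^\prime/z-(\alphab-1)L_G z^{\alphab-2}$ on $[\max(\pntCompact,1),\infty)$. In the main case $x\geq y\geq\max(\pntCompact,1)$, integration together with $y\log(x/y)/(x-y)\leq 1$ (from $\log(1+u)\leq u$) would give
\begin{equation*}
\frac{y(g(x)-g(y))}{x-y}\leq B_1^\prime - L_G\,\frac{y(x^{\alphab-1}-y^{\alphab-1})}{x-y}.
\end{equation*}
The key algebraic inequality $(x^{\alphab-1}-y^{\alphab-1})/(x-y)\geq ((\alphab-1)\wedge 1)\,x^{\alphab-2}$ then closes the argument: it follows from $y^{\alphab-1}\leq y\,x^{\alphab-2}$ when $\alphab\geq 2$, and from the tangent-under-secant bound $f(x)-f(y)\geq f'(x)(x-y)$ applied to the concave function $f(z)=z^{\alphab-1}$ when $1<\alphab<2$. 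For $x<y$ I would simply swap the roles of $x$ and $y$ in the same decomposition (the derivative bound on $g'$ is orientation-independent). For $x$ or $y$ in the compact region $(0,\pntCompact)$, I would split the integral at $\pntCompact$: on the compact piece $g$ is uniformly bounded by $L_G(\pntCompact^{\alphab-1}\vee 1)$ via \ref{H:polygrowth}, and \ref{H:pieceloclip}-(i) controls within-piece increments via $\osLip$. Crucially, \ref{H:pieceloclip}-(ii) ensures that in the identity $g(x)-g(y)=\int_y^x g'(z)\,dz+\sum_{\pntDisc_k\in(y,x)}[g(\pntDisc_k^+)-g(\pntDisc_k^-)]$ the jump sum is non-positive and can be discarded for an upper bound. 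The remaining bounded contributions assemble into the explicit $\Cczero$ of \eqref{eq:constants_Ccercled}.

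\textbf{Part $(\sigma)$.} Here I would use the unified decomposition $\tfrac{\sigma(x)}{x}-\tfrac{\sigma(y)}{y}=\tfrac{\sigma(x)-\sigma(y)}{x\vee y}-\tfrac{\sigma(x\wedge y)(x-y)}{xy}$ (easily verified by case analysis on $x>y$ versus $y>x$), then square via $(a+b)^2\leq 2(a^2+b^2)$ to separate the diffusion-regularity contribution from the boundedness contribution. The mean value theorem combined with \ref{H:diffusion_deriv} gives $(\sigma(x)-\sigma(y))^2\leq \alpha^2\diffcd^2(x\vee y)^{2\alpha-2}(x-y)^2$, while \ref{H:polygrowth}-$(\sigma)$ gives $\sigma^2(x\wedge y)\leq \diffc^2(x\wedge y)^{2\alpha}$. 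After multiplication by $y/(x-y)^2$ and the identity $(x\wedge y)^2/(xy)=(x\wedge y)/(x\vee y)$, both contributions reduce to multiples of $(x\vee y)^{2\alpha-3}$ (using $y/(x\vee y)\leq 1$ and $(x\wedge y)\leq (x\vee y)$), summing to the stated constant $2(\diffc^2+\alpha^2\diffcd^2)$.

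\textbf{Expected obstacle.} The main technical difficulty lies in part $(b)$, specifically the patching between the compact region $[0,\pntCompact]$ (where $b$ may have jumps and \ref{H:sch_control_b_deriv} does not apply) and the far-field region $[\max(1,\pntCompact),\infty)$, while simultaneously preserving the sharp $L_G((\alphab-1)\wedge 1)\,y(x\vee y)^{\alphab-2}$ coefficient. Verifying that the specific combination in \eqref{eq:constants_Ccercled} — mixing $B_1^\prime$, $\osLip$ and $L_G$ with powers of $\pntCompact$ — is just the right size to absorb all residual bounded contributions without degrading the leading negative term is the only genuinely delicate piece of the argument, and the reason the constant $\Cczero$ takes the apparently intricate form it does.
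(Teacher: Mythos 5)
Your part $(\sigma)$ is correct and essentially the paper's argument, and your treatment of the drift in the regime $x\geq y\geq \pntCompact\vee 1$ is a sound alternative derivation (differentiating $g(z)=(b(z)-b(0))/z$ and using the same quotient inequality that the paper isolates in Lemma~\ref{lem:equotient_Y_estimation}). The genuine gap is your claim that the case $x<y$ follows by "simply swapping the roles of $x$ and $y$". The left-hand side of \eqref{eq:estim_lemma_cases} and its target bound are \emph{not} symmetric in $(x,y)$: the weight is always $y$, and when $y$ is the larger variable the required negative term is $-\Lg((\alphab-1)\wedge1)\,y^{\alphab-1}$. Running your $g'$-integration in that orientation gives
\begin{equation*}
\frac{y}{y-x}\bigl(g(y)-g(x)\bigr)\;\leq\; B_1^\prime\,\frac{y\log(y/x)}{y-x}\;-\;\Lg\,\frac{y\,(y^{\alphab-1}-x^{\alphab-1})}{y-x},
\end{equation*}
and the bound $\frac{y\log(x/y)}{x-y}\leq1$ you rely on is only valid when the prefactor $y$ is the \emph{smaller} variable; here $\frac{y\log(y/x)}{y-x}\sim\log(y/x)$ is unbounded as $y\to\infty$ with $x$ fixed. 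For $\alphab=2$ (admissible, e.g. $\alpha=3/2$, $\alphab=2\alpha-1$) the negative term equals exactly $-\Lg y$, with no surplus over the target, so the spurious logarithm cannot be absorbed into any finite $\Cczero$; your chain of estimates therefore cannot reach \eqref{eq:estim_lemma_cases} in this regime, even though the lemma itself is true.

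The source of the loss is integrating the term $B_1^\prime/z$ coming from $b'(z)/z$. The paper avoids this by never differentiating the quotient: it uses the pivot decomposition $\frac{b(x)-b(0)}{x}-\frac{b(y)-b(0)}{y}=\frac{b(x)-b(y)}{x}+(b(y)-b(0))\bigl(\frac1x-\frac1y\bigr)$ (with the reciprocal matched to the variable over which $\int b'$ is taken, so the $B_1^\prime$ contribution stays of size $B_1^\prime\frac{|x-y|}{x\vee y}$ and is bounded after multiplication by $\frac{y}{x-y}$), bounds the second term directly by \ref{H:polygrowth}, and only then invokes the quotient inequality. Two secondary remarks: in the mixed region $x\wedge y<\pntCompact<x\vee y$ the paper inserts a pivot at $\pntCompact$ and uses the \emph{global} one-sided Lipschitz bound \eqref{eq:monotony_b} (constant $\osLip$), so it never needs the jump-sign condition \ref{H:pieceloclip}-{\bf(ii)} in this lemma; your alternative of discarding the (negative) jumps of $g$ is workable, but then the announced constant \eqref{eq:constants_Ccercled}, which is built from $\osLip$, $B_1^\prime$, $\Lg$ and powers of $\pntCompact$ exactly as produced by the paper's three-case decomposition, would have to be re-derived rather than asserted.
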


\subsection{Asymptotic behaviour and stability of the \expSch scheme on a prototypical case}\label{sec:stability}

Understanding the asymptotic behaviour of a stochastic system is crucial in various applications  as it enables the identification of suitable actions to be taken in different situations.  
 For instance, it is relevant in modelling asset returns or volatility~\cite{Ait96,Spr}, estimating the proportion of individuals infected during an epidemic~\cite{gray2011stochastic}, and studying the equilibrium regime in turbulent kinetic energy modeling~\cite{bossy2022instantaneous}. Equally important is the guarantee that an approximation scheme will behave like the model. 

We restrict our analysis to the prototypical case where the SDE \eqref{eq:IntroSDE} has  drift $b$ and diffusion $\sigma$ of the form 
\begin{equation}\label{stab:prototipical_case}
b(x) = b(0) + B_1 x - B_2\  x^{2\alpha-1}\quad\mbox{and }\quad \sigma(x) = \diffc x^{\alpha}.
\end{equation}
The stability results presented in this section  can be extended straightforwardly  to the case $\alphab>2\alpha-1$. 
The following proposition states that the solution $X$ of   \eqref{eq:IntroSDE},  with \eqref{stab:prototipical_case}, approaches a steady state $\xi^*$ an infinite number of times in the asymptotic regime. The same behaviour is expected to be reached by the approximation $\oX$,  as the time-step $\Dt$ tends to zero.
\begin{prop}\label{prop:asympt_sol}
Let $B_1,b(0)$ some non-negative constants and $B_2>0$, with $2(\alpha-1)\leq 1 +  2\tfrac{B_2}{\diffc^2}$. Then the solution of \eqref{eq:IntroSDE} with the coefficients $b$ and $\sigma$ defined in  \eqref{stab:prototipical_case} satisfies
\begin{equation*}
\liminf_{t\rightarrow+\infty} X_t \leq \xi^*\qquad\mbox{ and }\qquad \limsup_{t\rightarrow+\infty} X_t \geq \xi^*,\qquad \PP\mbox{-a.s.}
\end{equation*}
where $\xi^*$ is the unique positive solution of $\{x \in \RR^+ ;
b(x)/x - \tfrac{1}{2}(\sigma(x)/x))^2= 0\}$.
\end{prop}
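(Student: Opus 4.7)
The plan is to study the logarithmic dynamics. By Itô's formula (using the positivity of $X$),
\begin{equation*}
\log X_t = \log x_0 + \int_0^t g(X_s)\,ds + M_t,
\qquad
M_t := \int_0^t \diffc X_s^{\alpha-1}\, dW_s,
\end{equation*}
where $g(x) := \tfrac{b(0)}{x} + B_1 - (B_2 + \tfrac12\diffc^2) x^{2\alpha-2}$ is continuous and strictly decreasing on $(0,\infty)$ with unique zero $\xi^*$, and $\langle M\rangle_t = \diffc^2\int_0^t X_s^{2\alpha-2}\,ds$. The proof of each of the two inequalities will be by contradiction on the countable family of events $A_{\varepsilon,n} := \{X_t \geq \xi^* + \varepsilon,\ \forall\, t \geq n\}$ (resp. $B_{\varepsilon,n} := \{X_t \leq \xi^* - \varepsilon,\ \forall\, t\geq n\}$), $\varepsilon\in\mathbb{Q}_+$, $n\in\mathbb{N}$; it suffices to show each has probability zero.

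For the $\liminf$ statement, on $A_{\varepsilon,n}$ one has for $s\geq n$, using $X_s\geq\xi^*+\varepsilon$, the bound $g(X_s) \leq \tfrac{b(0)}{\xi^*+\varepsilon}+B_1 - (B_2+\tfrac12\diffc^2) X_s^{2\alpha-2}$. Setting $\lambda := (B_2+\tfrac12\diffc^2)/\diffc^2$, this yields
\begin{equation*}
\log X_t \;\leq\; C_\omega + \Big[\tfrac{b(0)}{\xi^*+\varepsilon} + B_1\Big](t-n) \;-\; \lambda(\langle M\rangle_t - \langle M\rangle_n) + M_t,
\end{equation*}
where the random constant $C_\omega = \log x_0 + \int_0^n g(X_s)ds$ is a.s.\ finite thanks to the negative-moment bound of Proposition~\ref{prop:XMoments}. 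On $A_{\varepsilon,n}$ we have $\langle M\rangle_t - \langle M\rangle_n \geq \diffc^2 (\xi^*+\varepsilon)^{2\alpha-2}(t-n) \to \infty$, so the strong LLN for continuous local martingales gives $M_t = o(\langle M\rangle_t)$. Dividing the previous display by $\langle M\rangle_t$ and using the defining relation $\tfrac{b(0)}{\xi^*}+B_1 = \lambda\diffc^2(\xi^*)^{2\alpha-2}$, one obtains
\begin{equation*}
\limsup_{t\to\infty}\frac{\log X_t}{\langle M\rangle_t} \;\leq\; \frac{\tfrac{b(0)}{\xi^*+\varepsilon}+B_1}{\diffc^2(\xi^*+\varepsilon)^{2\alpha-2}} - \lambda \;<\; 0,
\end{equation*}
the strict inequality coming from $\xi^*+\varepsilon > \xi^*$ together with $b(0)\geq 0$. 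Hence $\log X_t\to -\infty$ on $A_{\varepsilon,n}$, contradicting $X_t\geq\xi^*+\varepsilon$; thus $\PP(A_{\varepsilon,n})=0$.

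For the $\limsup$ statement, on $B_{\varepsilon,n}$ the monotonicity of $g$ gives $g(X_s) \geq g(\xi^*-\varepsilon) > 0$ for $s\geq n$, whence
\begin{equation*}
\log X_t \;\geq\; C'_\omega + g(\xi^*-\varepsilon)(t-n) + M_t,
\end{equation*}
with $C'_\omega$ a.s.\ finite by the same negative-moment argument. On $B_{\varepsilon,n}$, $\langle M\rangle_t \leq \diffc^2(\xi^*-\varepsilon)^{2\alpha-2}\,t$ grows at most linearly, so either $\langle M\rangle_\infty<\infty$ and $M_t$ converges, or the LLN yields $M_t=o(\langle M\rangle_t)=o(t)$. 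In either case, $\log X_t \to +\infty$, contradicting $X_t \leq \xi^*-\varepsilon$; hence $\PP(B_{\varepsilon,n})=0$.

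The main technical point is the sharp comparison in the $\liminf$ case: the linear drift term $[\tfrac{b(0)}{\xi^*+\varepsilon}+B_1](t-n)$ must be shown to be strictly dominated by $\lambda\,\langle M\rangle_t$; this is exactly where the defining equation $g(\xi^*)=0$ is used, and this is the only delicate step—the $\limsup$ side is easier because $\langle M\rangle_t$ is then automatically sub-linear on the contradiction event. The needed integrability of $1/X_s$ on $[0,n]$ is inherited from Proposition~\ref{prop:XMoments} (which requires the parameter condition $2(\alpha-1)\leq 1+2B_2/\diffc^2$ invoked in the statement).
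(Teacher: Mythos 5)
Your proof is correct, and it follows the same backbone as the paper's argument: Itô's formula applied to $\log X_t$, strict monotonicity of $\varphi(x)=\tfrac{b(0)}{x}+B_1-(B_2+\tfrac{\diffc^2}{2})x^{2(\alpha-1)}$, a contradiction on events of eventual confinement away from $\xi^*$, and the strong law for continuous local martingales. The one genuine difference is in the $\liminf$ half, which the paper dispatches with ``a similar argument'': there the contradiction event only bounds $X$ from \emph{below}, so $\langle M\rangle_t=\diffc^2\int_0^t X_s^{2(\alpha-1)}ds$ may grow superlinearly and the paper's normalization by $t$ (i.e.\ $\tfrac1t\int_0^t X_s^{\alpha-1}dW_s\to0$) is not immediate from $M_t=o(\langle M\rangle_t)$. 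Your device of keeping the $-(B_2+\tfrac12\diffc^2)X_s^{2\alpha-2}$ part of the drift as $-\lambda\,\langle M\rangle_t$ and dividing by $\langle M\rangle_t$ rather than $t$ handles exactly this, with the defining relation $\varphi(\xi^*)=0$ giving the strict negativity of the limit; this is a cleaner and more robust completion of that step, at the cost of a slightly longer computation on the $\liminf$ side. The $\limsup$ side and the reduction to the countable family of events $A_{\varepsilon,n}$, $B_{\varepsilon,n}$ match the paper's argument (the paper fixes $\epsilon$ and $T(\omega_0)$ pathwise, which is the same reduction). Minor remarks: the a.s.\ finiteness of $\int_0^n g(X_s)\,ds$ already follows from strict positivity and continuity of the paths, so the appeal to the negative-moment bound of Proposition~\ref{prop:XMoments} is not needed; and, as in the paper's statement, the existence of $\xi^*$ tacitly requires $b(0)+B_1>0$, which neither proof needs to address beyond the statement itself.
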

\begin{proof}
For the sake of completeness, we describe below the main steps of the proof; the details are obtained by following the ideas proposed in  \cite{gray2011stochastic}. 

The It\^o formula applied to the map $\log(X_t)$ in $[0,t]$ leads to
\begin{equation}\label{eq:Ito_stability}
\frac{\log(X_t)}t = \frac{\log (x)}t + \frac1t\int_0^t \varphi(X_s)ds + \frac{\diffc}t\int_0^t X_{s}^{\alpha-1}dW_s,
\end{equation}
where $\varphi(x) =  \frac{b(0)}{x}+B_1 - \left(B_2+\frac{\diffc^2}2\right)x^{2(\alpha-1)}$.  Notice that $\varphi'(x)<0$ for all $x>0$, implying that 
\[\lim_{x\rightarrow+\infty}\varphi(x) < 0 < \lim_{x\rightarrow0^+}\varphi(x).\]
Therefore, there exists a unique positive $\xi^*$ solution to $\{x;\varphi(x) = 0\}$. Moreover $\varphi$ changes signs in the neighbourhood of $\xi^*$.
\medskip
 
Let us assume that $\limsup_{t\rightarrow+\infty} X_t <\xi^*$. Then we consider $\epsilon>0$ small enough such that, for $\omega_0\in \{\omega:~\limsup_{t\rightarrow+\infty} X_t(\omega)\leq \xi^*-2\epsilon\}$ there exists $T(\omega_0)$ such that,
\begin{equation*}
	X_t(\omega_0)\leq \xi^*-\epsilon, \mbox{ for all }t\geq T(\omega_0).
\end{equation*}
Since $x\mapsto\varphi(x)$ is non-increasing, we obtain
\begin{equation}\label{eq:drift_bound_solution}
	\varphi(X_t(\omega_0))\geq \varphi(\xi^*-\epsilon), \mbox{ for all }t\geq T(\omega_0).
\end{equation}
From the Law of Large Numbers (LLN) for continuous martingales, we also have
\begin{equation*} 
\omega_0\in \{\omega:~\limsup_{t\rightarrow+\infty}X_{t}(\omega)\leq \xi^*-2\epsilon,\quad \mbox{ and }\quad\lim_{t\rightarrow+\infty}\tfrac1t\int_0^tX_{s}^{\alpha-1}(\omega)dW_s(\omega)=0\}.
\end{equation*}
Then, from Equation \eqref{eq:Ito_stability} and \eqref{eq:drift_bound_solution}, we obtain
\begin{equation*}
\begin{aligned}
\liminf_{t\rightarrow+\infty}\frac{\log(X_t(\omega_0))}t \geq \liminf_{t\rightarrow+\infty}\frac1t\int_0^{T(\omega_0)}\varphi(X_s(\omega_0))ds +\liminf_{t\rightarrow+\infty}\frac{1}t\int_{T(\omega_0)}^{t}\varphi(\xi^*-\epsilon)ds =\varphi(\xi^*-\epsilon)>0.
\end{aligned}
\end{equation*}
The above implies that $\limsup_{t\rightarrow+\infty}X_t(\omega_0)\geq\liminf_{t\rightarrow+\infty}X_t(\omega_0) = +\infty,$ which is a contradiction since we assumed $\limsup_{t\rightarrow+\infty} X_t <\xi^*<+\infty$. Hence, it must be the case that, $\PP$-a.s.,  $\limsup_{t\rightarrow+\infty} X_t \geq \xi^*$.

Following a similar argument, it can be shown that $\liminf_{t\rightarrow+\infty} X_{t}\leq\xi^*$. 
\end{proof}

\begin{rem} For a more general drift  $b$ satisfying \ref{H:polygrowth}, it is still  possible to show that the function $\psi(x) = \frac{b(x)}x - \tfrac{1}{2}(\sigma(x)/x)^2$ satisfies $\lim_{x\rightarrow+\infty}\psi(x)<0$ and $\lim_{x\rightarrow0^+}\psi(x)>0$, i.e. $\psi(x) = 0$ has at least one solution $\xi^*$. 
However, in order to extend the results from Proposition \ref{prop:asympt_sol}, we need $\xi^*$ to be unique. For this we could consider some additional hypotheses on the growth of $\tfrac{d}{dx}\left(b(x)/x\right)$ or impose the uniqueness for the root of  $\psi$.
\end{rem}

\subsubsection{Stability of the \expSch scheme}

For the prototypical  case  \eqref{stab:prototipical_case},  given an  homogeneous $N$-partition of the time interval $[0,T]$   with  time-step $\Delta t=t_{n+1}-t_n$, the \expSch scheme simplifies as
\begin{equation}\label{eq:proto_scheme_continuous}
	d\oX_t = \left(\overline{X}_t-b(0){\dt}\right)\Big[(B_1-B_2\overline{X}_{\eta(t)}^{2(\alpha-1)})\ dt+\diffc\ \overline{X}_{\eta(t)}^{\alpha-1}dW_t\Big]+b(0) dt.
\end{equation}

\begin{prop}\label{prop:asympt_disc}
	Let $B_1,b(0)$ some non-negative constants, $B_2>0$ such that $2(\alpha-1)\leq1+2\tfrac{B_2}{\diffc^2}$ and $\Dt\leq\frac1{B_1}$. Then, the scheme $\oX$ solution to \eqref{eq:proto_scheme_continuous} satisfies
	\[\liminf_{t\rightarrow+\infty} \oX_{t} \leq \xi_{\Dt},\qquad \mbox{ and }\qquad\limsup_{t\rightarrow+\infty} \oX_{t} \geq \xi^{\Dt},\qquad \PP\mbox{-a.s.}\] 
	where $\xi^{\Dt}$ is the unique positive solution of  $\{x \in \RR^+ ;\varphi^{\Dt}(x) := 
\frac{b(x)}{x} - \tfrac{1}{2}(\frac{\sigma(x)}{x}))^2 - \frac{b(0)}{x}B_1\Dt= 0\}$,  
and $\xi_{\Dt}$ is the unique positive solution of 
$\{x \in \RR^+ ;\varphi_{\Dt}(x) := 
\frac{b(x)}{x} - \tfrac{1}{2}(\frac{\sigma(x)}{x}))^2 + b(0)(\diffc^2+B_2)x^{2\alpha-3} \Dt= 0\}$. 
\end{prop}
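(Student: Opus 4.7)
The strategy parallels the proof of Proposition~\ref{prop:asympt_sol}: derive a logarithmic identity for $\oX_t$, neutralise the martingale part via a law of large numbers, and argue by contradiction against both the $\limsup$ and the $\liminf$ assumptions.

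On each interval $(t_n,t_{n+1})$, the process $\oX_t-b(0)(t-t_n)$ evolves as a purely multiplicative SDE, so It\^o's formula gives, just before the jump of $\dt$ back to zero at $t_{n+1}$,
\[\log(\oX_{t_{n+1}}-b(0)\Dt)-\log\oX_{t_n}=\Dt\,\psi(\oX_{t_n})+\diffc\oX_{t_n}^{\alpha-1}\Delta W_n,\quad \psi(x):=B_1-(B_2+\tfrac{\diffc^2}{2})x^{2(\alpha-1)}.\]
Writing $\oX_{t_{n+1}}=b(0)\Dt+\oX_{t_n}Z_n$ with $Z_n$ log-normal given $\Ff_{t_n}$, so that $\log\oX_{t_{n+1}}-\log(\oX_{t_{n+1}}-b(0)\Dt)=\log\bigl(1+b(0)\Dt/(\oX_{t_n}Z_n)\bigr)$, and telescoping produces the discrete-time analogue of~\eqref{eq:Ito_stability}:
\[\frac{\log\oX_{t_N}}{t_N}=\frac{\log x_0}{t_N}+\frac{1}{t_N}\int_0^{t_N}\!\!\psi(\oX_{\eta(s)})\,ds+\frac{\diffc}{t_N}\int_0^{t_N}\!\!\oX_{\eta(s)}^{\alpha-1}dW_s+\frac{1}{t_N}\sum_{n=0}^{N-1}\log\!\left(1+\frac{b(0)\Dt}{\oX_{t_n}Z_n}\right).\]

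For the bound $\limsup_{t\to\infty}\oX_t\ge\xi^{\Dt}$, I would assume for contradiction that $\oX_t\le\xi^{\Dt}-\epsilon$ eventually. On this event $\oX$ is bounded, hence $t_N^{-1}\int_0^{t_N}\oX_{\eta(s)}^{\alpha-1}dW_s\to 0$ a.s.\ by the LLN for continuous martingales, as in Proposition~\ref{prop:asympt_sol}. Monotonicity of $\psi$ gives $t_N^{-1}\!\int_0^{t_N}\!\psi(\oX_{\eta(s)})ds\ge\psi(\xi^{\Dt}-\epsilon)+o(1)$; the pathwise inequality $\log(1+y)\ge y/(1+y)$, applied with $y=b(0)\Dt/(\oX_{t_{n+1}}-b(0)\Dt)$, yields $\log(1+b(0)\Dt/(\oX_{t_n}Z_n))\ge b(0)\Dt/\oX_{t_{n+1}}\ge b(0)\Dt/(\xi^{\Dt}-\epsilon)$. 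Adding these lower bounds, $\liminf_{t\to\infty}\log\oX_{t_N}/t_N\ge\varphi^{\Dt}(\xi^{\Dt}-\epsilon)+b(0)B_1\Dt/(\xi^{\Dt}-\epsilon)>0$, using that $\Dt\le 1/B_1$ makes the coefficient $b(0)(1-B_1\Dt)$ of $1/x$ in $\varphi^{\Dt}$ non-negative, hence $\varphi^{\Dt}$ strictly decreasing with unique root $\xi^{\Dt}$. This contradicts the boundedness of $\oX$.

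The dual inequality $\liminf\oX_t\le\xi_{\Dt}$ is symmetric but requires upper-bounding the log-sum, which is impossible pathwise because $Z_n^{-1}$ is unbounded. I would therefore split each $\log(1+b(0)\Dt/(\oX_{t_n}Z_n))$ into its $\Ff_{t_n}$-conditional expectation plus a martingale increment; the summed increments divided by $t_N$ vanish by the discrete martingale LLN, since the conditional variances remain bounded on the event $\{\oX_t\ge\xi_{\Dt}+\epsilon\}$. Combining $\log(1+y)\le y$ with the exact identity $\EE[Z_n^{-1}\mid\Ff_{t_n}]=\exp\{-(B_1-(B_2+\diffc^2)\oX_{t_n}^{2(\alpha-1)})\Dt\}$, Taylor-expanded to first order in $\Dt$, bounds the conditional expectation by $b(0)\Dt/\oX_{t_n}+b(0)(B_2+\diffc^2)\oX_{t_n}^{2\alpha-3}\Dt^2-b(0)B_1\Dt^2/\oX_{t_n}+O(\Dt^3)$; dropping the non-positive $-b(0)B_1\Dt^2/\oX_{t_n}$ and adding $\Dt\psi(\oX_{t_n})$ produces the per-step upper bound $\Dt\varphi_{\Dt}(\oX_{t_n})+O(\Dt^2)$. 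On the contradiction event this gives $\limsup\log\oX_{t_N}/t_N\le\varphi_{\Dt}(\xi_{\Dt}+\epsilon)<0$, contradicting $\oX_t\ge\xi_{\Dt}+\epsilon>0$. The main obstacle is precisely this Taylor-expansion step: recovering the sharp $+b(0)(B_2+\diffc^2)x^{2\alpha-3}\Dt$ correction appearing in $\varphi_{\Dt}$ and controlling the $O(\Dt^2)$ remainders uniformly in the contradiction event, without which the argument would only deliver a weaker threshold.
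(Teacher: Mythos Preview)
Your $\limsup$ direction is sound and, modulo the discrete versus continuous formulation, matches the paper's argument. The $\liminf$ direction, however, has a genuine gap that you yourself flag as the ``main obstacle'' without resolving it.

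The issue is that on the contradiction event $\{\liminf_t\oX_t>\xi_{\Dt}\}$ you only obtain an eventual \emph{lower} bound on $\oX$, never an upper one. Both steps of your argument for that direction break down without an upper bound on $x=\oX_{t_n}$. First, the Taylor expansion $(b(0)\Dt/x)\exp\{-(B_1-(B_2+\diffc^2)x^{2(\alpha-1)})\Dt\}=b(0)\Dt/x+b(0)(B_2+\diffc^2)x^{2\alpha-3}\Dt^2+O(\Dt^3)$ is not uniform in $x$: for large $x$ the exponent $(B_2+\diffc^2)x^{2(\alpha-1)}\Dt$ is large and the remainder is of order $\exp\{(B_2+\diffc^2)x^{2(\alpha-1)}\Dt\}$, not $O(\Dt^3)$. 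Indeed, since $e^z\ge 1+z$, the first-order truncation is a \emph{lower} bound for the conditional expectation, not the upper bound you need. Second, the discrete martingale LLN for the centred increments $\log(1+b(0)\Dt/(\oX_{t_n}Z_n))-\EE[\cdot\mid\Ff_{t_n}]$ requires controlled conditional variances; but $\EE[Z_n^{-2}\mid\Ff_{t_n}]=\exp\{(2B_2+3\diffc^2)x^{2(\alpha-1)}\Dt-2B_1\Dt\}$ also blows up for large $x$, so the variance hypothesis is not met on the contradiction event.

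The paper sidesteps this entirely by applying It\^o's formula to $\log\oX_t$ in \emph{continuous time}. The drift of $\log\oX_t$ contains the exact factor $r_s:=1-b(0)\delta(s)/\oX_s\in[0,1]$, and elementary algebra on $-r_s(\tfrac{\diffc^2}{2}r_s+B_2)$ using only $r_s\in[0,1]$ (specifically $r_s-\tfrac12 r_s^2\le\tfrac12$) yields the pathwise upper bound
\[
\frac{d}{dt}\log\oX_t\;\le\;\varphi_{\Dt}\bigl(\oX_t\wedge\oX_{\eta(t)}\bigr)\;+\;\text{(local martingale)},
\]
with no expansion, no conditional moments, and no uniformity in $x$ required. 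The correction term $b(0)(\diffc^2+B_2)x^{2\alpha-3}\Dt$ in $\varphi_{\Dt}$ then falls out directly from the bound $b(0)\delta(s)/\oX_s\cdot(\oX_s\wedge\oX_{\eta(s)})^{2(\alpha-1)}\le b(0)\Dt\,(\oX_s\wedge\oX_{\eta(s)})^{2\alpha-3}$. The paper also needs a separate short argument to show that $\varphi_{\Dt}$, though not monotone, still has a unique positive root with the correct sign pattern; this is done by checking that $\varphi_{\Dt}$ is strictly positive at any critical point.
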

\begin{proof}
Define the map $\varphi(t,x) = B_1 - \left(1-\frac{b(0)\dt}x\right)\left(\frac{\diffc^2}2+B_2-\frac{\diffc^2}2\frac{b(0)\dt}x\right)x^{2(\alpha-1)}+\tfrac{b(0)(1-B_1\Dt)}x$. 
Since, for all $t>0$, $\oX_t> \oX_t - b(0)\dt\geq0$, we have 
\begin{equation}\label{eq:first_funct_bound}
\varphi(t,\oX_t)\geq\varphi(0,\oX_t) = \varphi^{\Dt}(\oX_t).
\end{equation}
Applying It\^o formula and reorganizing some terms, we get
\begin{equation*}
\begin{aligned}
\log(\oX_t) &= \log (x) + \int_0^t \frac{\left(\overline{X}_s-b(0){\delta(s)}\right)}{\oX_s}\left(B_1 - B_2\oX_{\eta(s)}^{2(\alpha-1)}\right)ds +\int_0^t \frac{b(0)}{\oX_s}ds + \int_0^t \frac{\diffc\left(\overline{X}_s-b(0){\delta(s)}\right)}{\oX_s}\overline{X}_{\eta(s)}^{\alpha-1}dW_s\\
&\qquad\qquad-\frac{\diffc^2}2\int_0^t\frac{\left(\overline{X}_t-b(0){\delta(s)}\right)^2}{\oX_s^2}\oX_{\eta(s)}^{2(\alpha-1)}ds\\
&= \log (x) + \int_0^t\left[B_1- \frac{\left(\overline{X}_s-b(0){\delta(s)}\right)}{\oX_s}\left(\frac{\diffc^2}2\frac{\left(\overline{X}_s-b(0){\delta(s)}\right)}{\oX_s}+B_2\right)\oX_{\eta(s)}^{2(\alpha-1)}\right]ds +\int_0^t \frac{b(0)(1-B_1\delta(s))}{\oX_s}ds \\
&\qquad\qquad+ \int_0^t \frac{\diffc\left(\overline{X}_s-b(0){\dt}\right)}{\oX_s}\overline{X}_{\eta(s)}^{\alpha-1}dW_s.
\end{aligned}
\end{equation*}
If we assume $\Dt$ is such that $\Dt\leq\frac1{B_1}$, then, for all $t\geq0$, $1-B_1\dt\geq1-B_1\Dt\geq0$ and 
{\small{
\begin{equation*}
\begin{aligned}
\log(\oX_t) &\geq\log (x) + \int_0^t\left[B_1- \frac{\left({\oX_s\vee\oX_{\eta(s)}}-b^+(0){\delta(s)}\right)}{{\oX_s\vee\oX_{\eta(s)}}}\left( \frac{\diffc^2}2\frac{\left({\oX_s\vee\oX_{\eta(s)}}-b^+(0){\delta(s)}\right)}{\oX_s\vee\oX_{\eta(s)}}+B_2\right)({\oX_s\vee\oX_{\eta(s)}})^{2(\alpha-1)}\right]ds \\
				&\qquad+\int_0^t\frac{b^+(0)(1-B_1\Dt)}{\oX_s\vee\oX_{\eta(s)}}ds + \int_0^t \frac{\diffc\left(\overline{X}_s-b^+(0){\dt}\right)}{\oX_s}\overline{X}_{\eta(s)}^{\alpha-1}dW_s\\
				&\quad=\log (x) + \int_0^t\varphi(s,\oX_s\vee\oX_{\eta(s)})ds+ \int_0^t \frac{\diffc\left(\overline{X}_s-b^+(0){\dt}\right)}{\oX_s}\overline{X}_{\eta(s)}^{\alpha-1}dW_s\\
				&\qquad \geq \log (x) + \int_0^t\varphi^{\Dt}(\oX_s\vee\oX_{\eta(s)})ds+ \int_0^t \frac{\diffc\left(\overline{X}_s-b^+(0){\dt}\right)}{\oX_s}\overline{X}_{\eta(s)}^{\alpha-1}dW_s,
\end{aligned}
\end{equation*}
}}
where the last inequality is due to \eqref{eq:first_funct_bound}.
	
It is easy to verify that the map $x\mapsto \varphi^{\Dt}(x)$ is strictly decreasing in $(0,+\infty)$ and satisfies
\[\lim_{x\rightarrow+\infty}\varphi^{\Dt}(x) < 0 < \lim_{x\rightarrow0^+}\varphi^{\Dt}(x).\]
Therefore, there exists a unique (positive) solution $\xi^{\Dt}$ to the equation $\varphi^{\Dt}(\xi^{\Dt}) = 0,$ and $\varphi^{\Dt}(\xi^{\Dt}+\epsilon)<0<\varphi^{\Dt}(\xi^{\Dt}-\epsilon)$ for any $\epsilon>0.$

\medskip
We assume now that $\limsup_{t\rightarrow+\infty} \oX_{t} <\xi^{\Dt}$, and consider $\epsilon>0$ such that
\[\PP(\{\omega:~\limsup_{t\rightarrow+\infty} \oX_{\eta(t)}(\omega) = \limsup_{t\rightarrow+\infty}\oX_t(\omega)\leq \xi^{\Dt}-2\epsilon\})>\epsilon.\]
Take the pair $(\omega_0, T(\omega_0))$ and $\epsilon$ small enough such that, for all $t\geq T(\omega_0),$ $\oX_{t}(\omega_0)\vee\oX_{\eta(t)}(\omega_0)\leq \xi^{\Dt}-\epsilon$. From the monotonicity of $\varphi^{\Dt}(\cdot)$, we obtain
\begin{equation}\label{eq:drift_bound_1}
\varphi^{\Dt}(\oX_{t}(\omega_0)\vee\oX_{\eta(t)}(\omega_0))\geq \varphi^{\Dt}(\xi^{\Dt}-\epsilon)>0,\qquad \mbox{for all }t\geq T(\omega_0).
\end{equation}
Thus, from LLN for continuous martingales,  we can consider $\omega_0$ such that, 
\begin{equation*}
\liminf_{t\rightarrow+\infty}\frac{\log(\oX_t(\omega_0))}t \geq\liminf_{t\rightarrow+\infty}\frac{1}t\int_{T(\omega_0)}^{t}\varphi^{\Dt}(\xi^{\Dt}-\epsilon)ds=\varphi(0,\xi^{\Dt}-\epsilon)>0,
\end{equation*}
implying $\limsup_{t\rightarrow+\infty}\oX_{t}(\omega_0) = +\infty,$ and contradicting the inequality $\limsup_{t\rightarrow+\infty} \oX_{t} <\xi^{\Dt}$.
	
In order to prove $\liminf_{t\rightarrow+\infty} \oX_{t} \leq \xi_{\Dt}$ we first bound $\log(\oX_t)$ from above:
\small{
\begin{equation*}
\begin{aligned}
\log(\oX_t) &= \log (x) + \int_0^t\left[B_1- \left(1-\tfrac{b(0){\delta(s)}}{\oX_s}\right)\left(\frac{\diffc^2}2+B_2-\frac{\diffc^2}2\frac{b(0){\delta(s)}}{\oX_s}\right)\oX_{\eta(s)}^{2(\alpha-1)}\right]ds +\int_0^t \frac{b(0)(1-B_1\delta(s))}{\oX_s}ds \\
&\qquad\qquad+ \int_0^t \frac{\diffc\left(\overline{X}_s-b(0){\dt}\right)}{\oX_s}\overline{X}_{\eta(s)}^{\alpha-1}dW_s\\
&\leq \log (x) + \int_0^t\left[B_1- \left(1-\tfrac{b(0){\delta(s)}}{\oX_s}\right)\left(\frac{\diffc^2}2+B_2-\frac{\diffc^2}2\frac{b(0){\delta(s)}}{\oX_s}\right)(\oX_{s}\wedge\oX_{\eta(s)})^{2(\alpha-1)}\right]ds +\int_0^t \frac{b(0)}{\oX_s\wedge\oX_{\eta(s)}}ds \\
&\qquad\qquad+ \int_0^t \frac{\diffc\left(\overline{X}_s-b(0){\dt}\right)}{\oX_s}\overline{X}_{\eta(s)}^{\alpha-1}dW_s. 
\end{aligned}
\end{equation*}
}
Then, using that $\oX_t\geq b(0)\dt$, we get
\begin{equation*}
\begin{aligned}
\log(\oX_t) 
&\leq \log (x) + \int_0^t\left[B_1- \left(\frac{\diffc^2}2+B_2\right)(\oX_{\eta(s)}\wedge\oX_{s})^{2(\alpha-1)} +
\frac{b(0){\delta(s)}}{\oX_s}\left(\diffc^2+B_2\right)(\oX_{\eta(s)}\wedge\oX_{s})^{2(\alpha-1)}\right]ds \\
&\qquad\qquad+\int_0^t \frac{b(0)}{\oX_s\wedge\oX_{\eta(s)}}ds+ \int_0^t \frac{\diffc\left(\overline{X}_s-b(0){\dt}\right)}{\oX_s}\overline{X}_{\eta(s)}^{\alpha-1}dW_s\\
& = \log (x) + \int_0^t\varphi_{\Dt}(\oX_{s}\wedge\oX_{\eta(s)})ds + \int_0^t \frac{\sigma\left(\overline{X}_s-b(0){\dt}\right)}{\oX_s}\overline{X}_{\eta(s)}^{\alpha-1}dW_s.
\end{aligned}
\end{equation*}

Notice that $\lim_{x\rightarrow0^+}\varphi_{\Dt}(x) =+\infty$ and $\lim_{x\rightarrow+\infty}\varphi_{\Dt}(x) =-\infty$, form which we deduce that there exist at least one positive solution to the equation $\varphi_{\Dt}(\xi_{\Dt}) = 0$, although  $\varphi_{\Dt}$ is no more non-increasing (as show in Figure~\ref{fig:psi_function_stability}). Let us consider the case $\varphi_{\Dt}$ not monotone and take $x_0$ a critical point of the map. Then, using  $\varphi_{\Dt}'(x_0) = 0$, we deduce that
\begin{equation*}
\varphi_{\Dt}(x_0)
= B_1+\frac{(2\alpha-1)b(0)}{2(\alpha-1)x_0}+\frac{b(0){\Dt}\left(\diffc^2+B_2\right)}{2(\alpha-1)}x_0^{2\alpha-3} + \frac{x_0}{2(\alpha-1)}\varphi_{\Dt}'(x_0)>0,
\end{equation*}
and so either $\varphi_{\Dt}$ is non-increasing or its minimum is positive (see Figure \ref{fig:psi_function_stability}). As a consequence, there exists a unique (positive) solution $\xi_{\Dt}$ to the equation $\varphi_{\Dt}(\xi_{\Dt}) = 0,$ and 
\begin{equation}\label{eq:above_root}
\varphi_{\Dt}(\xi_{\Dt}+\epsilon)<0<\varphi_{\Dt}(\xi_{\Dt}-\epsilon),\quad \mbox{ for any }\epsilon>0.
\end{equation}
Assuming that $\liminf_{t\rightarrow+\infty} \oX_{t} >\xi_{\Dt}$, we can consider $\epsilon>0$ such that
\[\PP(\{\omega:~\liminf_{t\rightarrow+\infty} \oX_{\eta(t)}(\omega) = \liminf_{t\rightarrow+\infty}\oX_t(\omega)\geq \xi_{\Dt}+2\epsilon\})>\epsilon.\]
Take the pair $(\omega_1, T(\omega_1))$ and $\epsilon$ small enough such that, 
\begin{equation*}
\oX_{t}(\omega_1)\wedge\oX_{\eta(t)}(\omega_1)\geq \xi_{\Dt}+\epsilon,\qquad\mbox{ for all }t\geq T(\omega_1).
\end{equation*}
\begin{figure}[ht!]
\centering
\begin{tikzpicture}[scale=0.7]
\begin{axis}[
	    axis lines = center,
	    ymin=-4, ymax=11,
	    xmin=-1, xmax=4.5,
	]
	\addplot [
	    domain=0:3, 
	    samples=100, 
	    color=red,
	    ultra thick
	]
	{1+(1/x)-((x^4)/2)+((5*x^3)/4)};
	\draw[blue, very thick] (axis cs:3.2,-0.3) -- (axis cs:3.2,0.3);
	\fill[color = blue!60, opacity=0.2] (axis cs:3.2,-0.3) rectangle (axis cs:4.5,0.3);
	\node[label={45:{$\xi_{\Delta t}$}},circle,fill,inner sep=1.5pt] at (axis cs:2.64,0) {};
	\node[blue,label={45:{$\xi_{\Delta t}+\epsilon$}},circle,fill,inner sep=1.5pt] at (axis cs:3.2,0) {};
	\node[label={265:{$x$}}] at (axis cs:4.5,-0.3) {};
	\node[label={0:{$\psi(x)$}}] at (axis cs:0.2,6) {};
	\end{axis}
	\end{tikzpicture}
	\caption{Behaviour of function $\varphi_\Dt(x) = B_1- (\frac{\diffc^2}2+B_2)x^{2(\alpha-1)}+\frac{b(0)}{x}+b(0)\left(\diffc^2+B_2\right)x^{2\alpha-3} \Dt$.\label{fig:psi_function_stability}}
\end{figure}
From the behaviour of $x\mapsto\varphi_{\Dt}(x)$ in the interval $(\xi_{\Dt}+\epsilon,+\infty)$ (see Figure \ref{fig:psi_function_stability}), we obtain
\begin{equation}\label{eq:drift_bound_2}
\varphi_{\Dt}(\oX_{t}(\omega_1)\wedge\oX_{\eta(t)}(\omega_1))\leq \varphi_{\Dt}(\xi_{\Dt}+\epsilon)<0,\quad \mbox{for all }t\geq T(\omega_1).
\end{equation}
Thus, from LLN for martingales and the inequality \eqref{eq:drift_bound_2}, we can consider $\omega_1$ such that, 
\begin{equation*}
\limsup_{t\rightarrow+\infty}\frac{\log(\oX_t(\omega_1))}t \leq\varphi_{\Dt}(\xi_{\Dt}+\epsilon)<0,
\end{equation*}
implying $\liminf_{t\rightarrow+\infty}\oX_{t}(\omega_0) = 0,$ and contradicting the inequality $\liminf_{t\rightarrow+\infty} \oX_{t} >\xi_{\Dt}$.
\end{proof}

From Propositions \ref{prop:asympt_sol} and \ref{prop:asympt_disc} we have $\xi^{\Dt}\leq \xi^*\leq \xi_{\Dt}$. Remarkably, as $\Delta t$ tends to 0, the \expSch process attains the same stable state than the solution $(X_t; t\geq 0)$ of the SDE \eqref{eq:IntroSDE} since 
\[|\varphi^{\Dt}(\xi^*)| = \frac{B_1b(0)}{\xi^*}\Dt, \quad \mbox{and} \quad |\varphi_{\Dt}(\xi^*)| = b(0)(\diffc^2+B_2)(\xi^*)^{2\alpha-3}\Dt.\]
Moreover, for $b(0) = 0$ we have $\xi^{\Dt}= \xi^*= \xi_{\Dt}$.  When $b(0)>0$, is it possible to control the asymptotic bias between the scheme and the true stationary state  $\xi^*$  having 
\[|\varphi^{\Dt}(\xi^*)| \vee |\varphi_{\Dt}(\xi^*)|  \leq \epsilon_0,\]
with a certain threshold $\epsilon_0>0$, we can choose $\Delta t$ such that
$\Dt\leq \epsilon_0 \ {\xi^*}  [ b(0) \ \{ (\diffc^2+B_2) (\xi^*)^{2(\alpha-1)} \vee B_1\}]^{-1}$. 

\section{Numerical experiments}\label{sec:num}

In this section, we investigate the numerical rate of convergence of the \expSch scheme and the optimality of the parameter conditions, as expounded in Theorem~\ref{thm:strong_rate_non_stopped} and Theorem~\ref{thm:strong_rate_stopped}.   
Most of the cases under consideration delve into the prototypical scenario of decreasing polynomial coefficients, encompassing both continuous and discontinuous cases satisfying assumptions  of Theorem~\ref{thm:strong_rate_non_stopped}. A distinctive case has been incorporated, where the driving term of the drift is a function bounded above by a decreasing polynomial outside a compact set (according to \ref{H:control}) but the derivative of this function is unbounded (contradicting \ref{H:sch_control_b_deriv}) , thereby illustrating the applicability of Theorem~\ref{thm:strong_rate_stopped}. 

The following class of model allows to restrict the set of parameters involved in the condition of convergence to the only parameter $B_2>0$: 
\begin{equation}\label{eq:proto_case}
dX_t = (1 + X_t -B_2\;X_t^{2\alpha-1}) dt+ X_t^2  dW_t, \;X_0=x>0.
\end{equation}
We vary the value of parameter $B_2$ in the range $[1.0, 6.5]$, such that at least, according to Lemma  \ref{lem:Schememoments},  the second moment of the solution is finite ($B_2\geq 1/2$). 
Note that expression \eqref{eq:test_b_control_explosion}	in the Feller test applied in the Appendix, allows to extract the condition $2 B_2 \geq -(2\alpha-1) \Sigma^2$ for $X$ to not explode at infinity in finite time, here $B_2 \geq - 1/2$.  

The  model \eqref{eq:proto_case} serves as a first test bench for the  numerical performance of the \expSch  scheme and its behaviour in terms of the theoretical condition on the parameters, allowing us to obtain a more nuanced understanding of its convergence characteristics. In addition, this model class  enables us to compare  the range of cases encompassed in the strong error convergence with the convergence  obtained in \cite{BoJaMa2021} for the weak error of the \expSch scheme. The theoretical condition \eqref{eq:condition_thm_unstopped_continuous_1} for the strong convergence with order 1/2  in Theorem~\ref{thm:strong_rate_non_stopped} is  expressed as $\mkappa{2p}\geq0$, while the conditions in Corollary 4.1 of \cite{BoJaMa2021} are presented as $\wkappa\geq0$ with, in term of $(\alpha, B_2)$, 
\begin{equation}\label{cond:kappa condition}
\begin{aligned}
\mkappa{2p}& := 2B_2+1 -2p[(4\alpha - 3)\vee(2\alpha)]  \vee \{  4\ (2p-  \tfrac34) \}, \\
\wkappa& := 2B_2-6 \alpha - \alpha^2 \vee [(12\alpha - 19)\vee(8\alpha - 10)\vee \tfrac{5\alpha^2}{2\alpha-1}]. 
\end{aligned}
\end{equation}
To simplify our comparison, $\alpha$ is fixed to 2 in the model \eqref{eq:proto_case}. We also  limit the  experiments to the $L^2(\Omega)$-error, choosing  $p=1$ and  we expect to observe numerically that
\begin{equation}\label{eq:rateconv}
\sqrt{\EE\left[\sup_{0\leq t\leq T}|X_t - \overline{X}_t|^2\right]} \leq C\;\sqrt{\Delta t},
\end{equation}
where now the conditions in \eqref{cond:kappa condition} simplify as $\mkappa{2} = 2B_2-9 \geq 0$ and the  $\wkappa = 2B_2 - {56}/{3}\geq 0$. Note then that $\mkappa{2}$ reduces to the condition \eqref{cond:lemma_cont} that allows to bound the local error term in Lemma \ref{lem:disc_contrib}.

\paragraph{Numerical parameters.  }  Unless expressly stated otherwise, we consider the initial condition $x_0=1$ and the terminal time $T=1$. Regarding the time-step, in the following we consider $\Delta t=2^{-q}$, for $q=1,\ldots,20$. In addition, the expectation in the right-hand side of  \eqref{eq:rateconv} is estimated by a Monte Carlo approximation, involving $N=10^6$ independent trajectories. 
For the numerical computation of the error, the reference values of $(X_{t_n};n=0,\ldots, \lfloor\tfrac{T}{\Delta t}\rfloor)$ are computed based on a refinement of the numerical scheme with time-step $\Delta t_{\text{ref}} =2^{-21}$. 

\paragraph{Numerical convergence of the \expSch scheme for continuous drift SDE \eqref{eq:proto_case}. } We consider the following cases, determined by the value of $B_2$
\begin{center}
\begin{tabular}{llll}
{\bf Case 1}\quad
&  $dX_t = (1+X_t-{13}/2X_t^{3}) dt+ X_t^{2} dW_t$
&  $\mkappa{2}=4$, &$\wkappa=  -{17}/{3}$, \\
{\bf Case 2}\quad %
&  $dX_t = (1+X_t-{11}/2X_t^{3}) dt+ X_t^{2} dW_t$
&  $\mkappa{2}=2$,& $\wkappa=  -{23}/{3}$, \\
{\bf Case 3}\quad %
&  $dX_t = (1+X_t-\;{9}/{2}\;X_t^{3}) dt+ X_t^{2} dW_t$
&  $\mkappa{2}=0$,&  $\wkappa=  -{29}/{3}$, \\
{\bf Case 4}\quad %
&  $dX_t = (1+X_t-\;7/2\;X_t^{3}) dt+ X_t^{2} dW_t$
& $\mkappa{2}=-2$,& $\wkappa=  -{35}/{3}$, \\
{\bf  Case 5}\quad
&  $dX_t = (1+X_t-\;5/2\;X_t^{3}) dt+ X_t^{2} dW_t$
& $\mkappa{2}=-4$,& $\wkappa=  -{41}/{3}$.
\end{tabular}
\end{center}
Notice that the {\bf Cases 4 \& 5} do not satisfy the assumptions $\mkappa{2}\geq0$, meaning that strong convergence has not been proved for these cases. Weak convergence  has not been proved for all cases. 

Case by case, the $L^2(\Omega)$-errors are shown in Table \ref{tab:experiment1}, where it is clearly shown  that the convergence rate is of the order of $1/2$ for all. These examples show that the criteria set out in Theorem \ref{thm:strong_rate_non_stopped} are sufficient conditions to obtain a strong convergence rate of $1/2$, far to be necessary. This behaviour is also illustrated in Figure \ref{im:experiment1}, plotting the obtained error estimates in a log-log scale.  In particular, we can observe (in the left plot) that the sufficient condition for the $L^2(\Omega)$-$\sup$ error to converge with rate 1/2 is significantly improvable.  Similar results (not shown here for brevity) were obtained for the error process stopped at $ \texp$ (see Equation \eqref{eq:texp}), exhibiting differences, with respect to the unstopped error in the variance level only, which increases as the value of $B_2$ decreases. 
\begin{table}[H]
\centering
{\footnotesize{
\begin{tabular}{| l l l l l l l l l l l}
\toprule
\multicolumn{11}{c}{Strong error with $\Delta t={2^{-q}}$, for $q=10,\ldots,20$}\\
\hline
 \tabhead{$q=10$} & \tabhead{$q=11$} & \tabhead{$q=12$} &\tabhead{$q=13$} & \tabhead{$q=14$} & \tabhead{$q=15$} &\tabhead{$q=16$} & \tabhead{$q=17$} & \tabhead{$q=18$}&\tabhead{$q=19$} & \tabhead{$q=20$} \\
\midrule
\multicolumn{11}{c}{\tabhead{{\bf Case 1}: $\kappa_{\texttt{strong}}>0$.}} \\ \hline
7.04e-03&5.02e-03& 3.58e-03& 2.55e-03& 1.81e-03 & 1.28e-03 & 9.02e-04 & 6.29e-04 & 4.30e-04 & 2.82e-04 &1.63e-04\\
\hline
\multicolumn{11}{c}{\tabhead{{\bf Case 2}: $\kappa_{\texttt{strong}}>0$.}} \\ \hline
8.65e-03 & 6.19e-03 & 4.42e-03 & 3.15e-03 & 2.24e-03 & 1.59e-03 & 1.12e-03 & 7.79e-04 & 5.32e-04 & 3.50e-04 & 2.02e-04\\
\hline
\multicolumn{11}{c}{\tabhead{{\bf Case 3}: $\kappa_{\texttt{strong}}=0$.}} \\ \hline
1.14e-02 & 8.17e-03 & 5.84e-03 & 4.17e-03 & 2.97e-03 & 2.10e-03 & 1.48e-03 & 1.03e-03 & 7.07e-04 & 4.64e-04 & 2.68e-04\\
\hline
\multicolumn{11}{c}{\cellcolor{red!25}\tabhead{{\bf Case 4}: $\kappa_{\texttt{strong}}<0$.}}\\
\hline
1.67e-02 & 1.20e-02 & 8.66e-03 & 6.20e-03 & 4.43e-03 & 
3.13e-03 & 2.21e-03 & 1.55e-03 & 1.06e-03 & 6.94e-04 & 4.01e-04\\
\hline
\multicolumn{11}{c}{\cellcolor{red!25}\tabhead{{\bf Case 5}: $\kappa_{\texttt{strong}}<0$.}}\\
\hline
3.03e-02 & 2.21e-02 & 1.60e-02 & 1.15e-02 & 8.22e-03 & 5.85e-03 & 4.14e-03 & 2.91e-03 & 1.99e-03 & 1.31e-03 & 7.54e-04\\
\bottomrule
\end{tabular}
}}
\caption{Observed numerical strong $L^2(\Omega)$ - error for a time interval $[0,1]$ and dynamics $dX_t = (1 + X_t - B_2 X_t^3)dt + X_t^2 dW_t$ with deterministic initial condition $x_0=1$.\label{tab:experiment1}}
\end{table}

\begin{figure}[H]
\centering
{\includegraphics[width=0.49\textwidth, height=0.34\textwidth]{./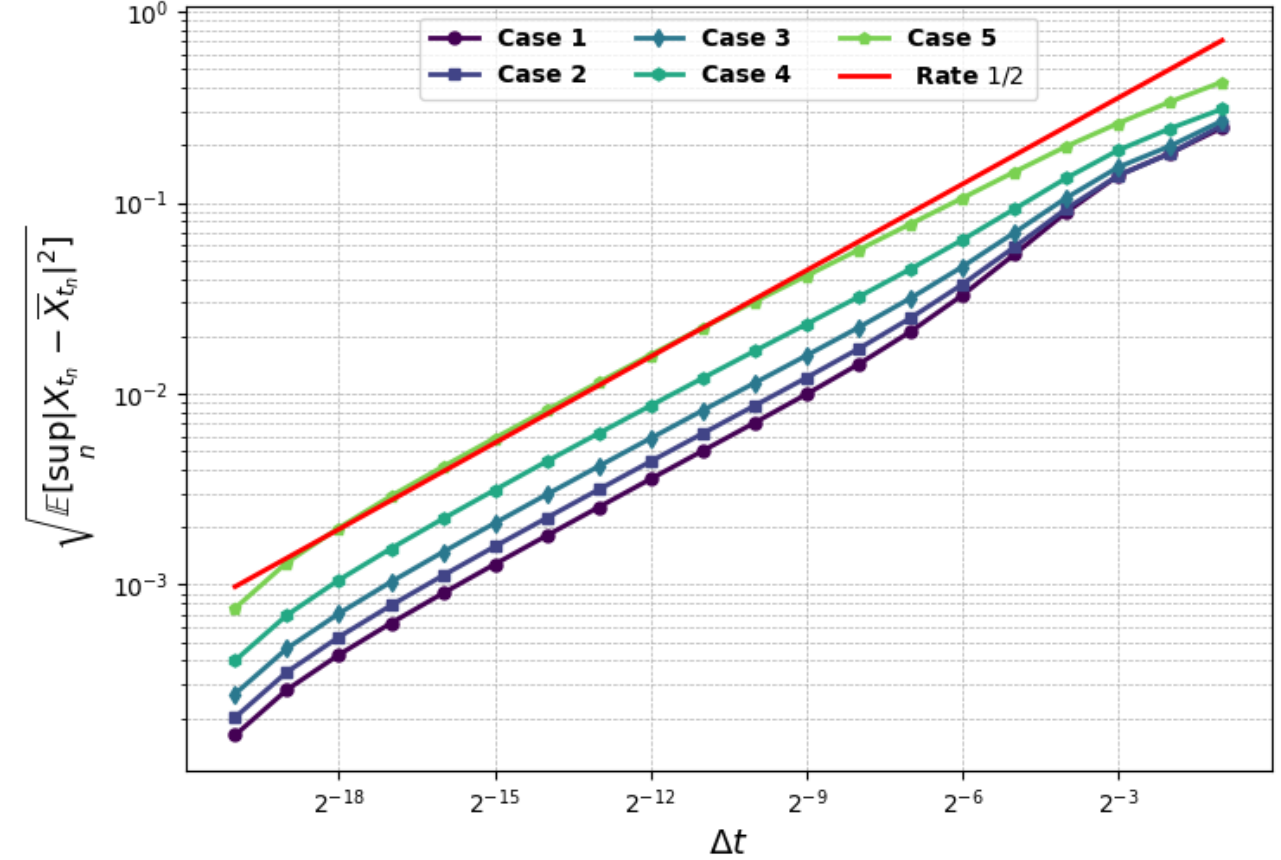}}
{\includegraphics[width=0.49\textwidth,height=0.34\textwidth]{./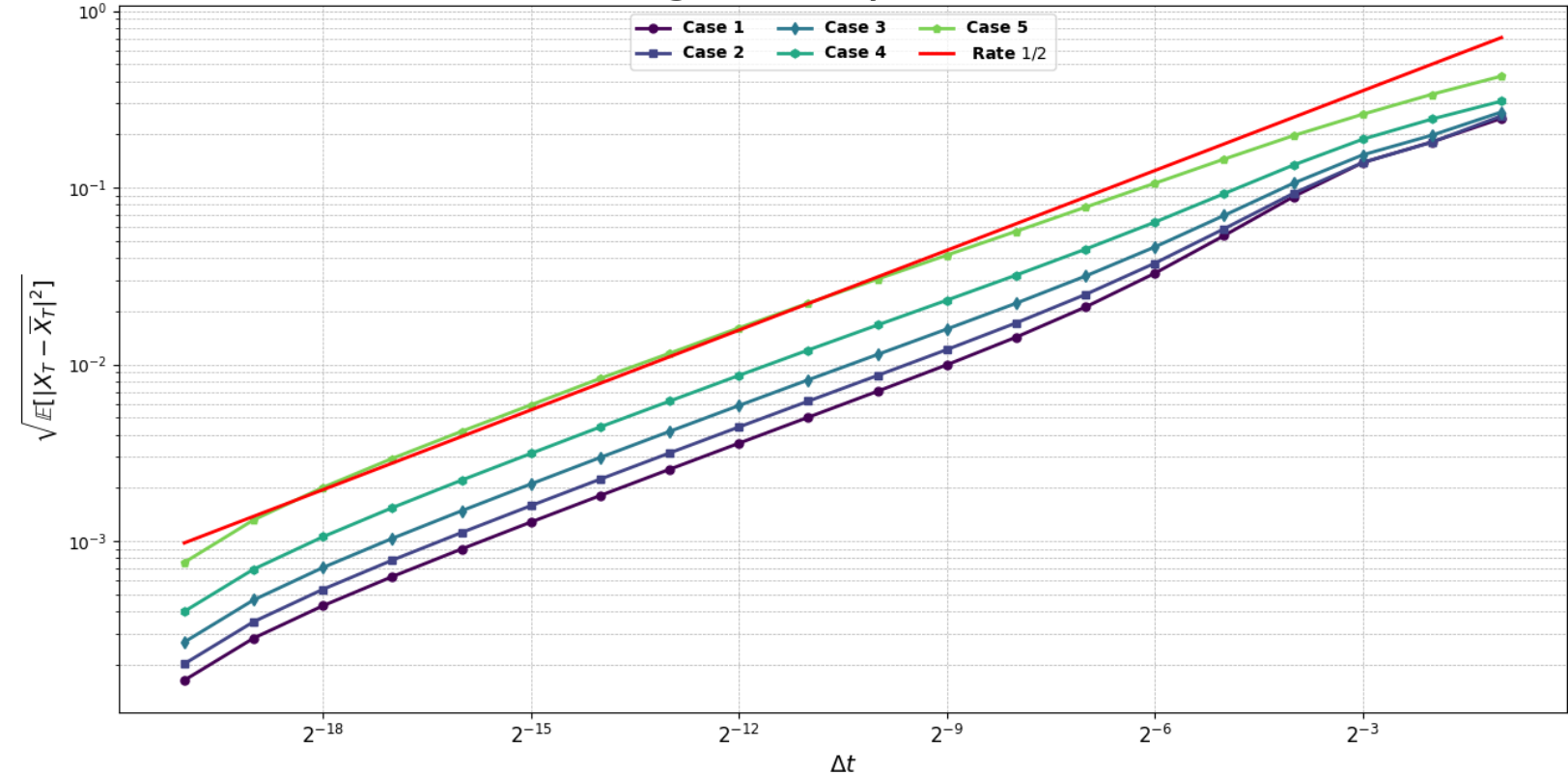}}
\caption{ Strong approximation error for the \expSch  scheme applied to  \eqref{eq:proto_case}, with  {\bf Cases 1  to 5} (in log-log scale), $\EE^{1/2}\left[\sup_{0\leq t\leq T}|X_t - \overline{X}_t|^2\right] $ (left) and $\EE^{1/2}\left[|X_T - \overline{X}_T|^2\right]$ (right). The strong error is compared with the reference slope of order 1/2 (in red).\label{im:experiment1}   }
\end{figure}

\paragraph{Numerical convergence of the \expSch scheme in more complex situations. }  The second set of cases  is composed with:   ({\bf 6})  a  prototypical case \eqref{eq:proto_case} with $B_2=1$, for which $\kappa_{\texttt{strong}}=-7$, allowing the \expSch scheme to have  a moment of order 3 only;   ({\bf 7}) a case with polynomial drift, similar to the previous case, with bounded and differentiable coefficients but with an unbounded derivative; ({\bf 8}) a case with discontinuous  drift, with $B_2$ changing  sign and becoming explosive ($B_2 < -1/2$), but differentiable outside a compact;  and ({\bf 9}) a Lotka-Volterra type of SDE with polynomial drift and explicit solution. In all cases the finiteness of -at least- moments of order 3 are guaranteed.
More precisely, we consider:
\begin{center}
\begin{tabular}{lll}
{\bf  Case 6}\quad &  $dX_t = (1+X_t-X_t^{3}) dt+ X_t^{2} dW_t$,  & with $x_0 = 1$,\\
{\bf Case 7}\quad
&  $dX_t = (1+X_t-(\cos(X_t)+2)^2 X_t^{3}) dt+ X_t^{2} dW_t$, & with $x_0 = 1$,\\
{\bf Case 8}\quad
&  $dX_t = (1+X_t-(6~{\bf 1}_{\{X_t>6\}\cup \{X_t<1.5\}} - 0.6~{\bf 1}_{\{1.5\leq X_t\leq6\}})X_t^{3}) dt+ X_t^{2} dW_t$, & with $x_0 = 3$,\\
{\bf Case 9}\quad
&  $dX_t = X_t(B_1-B_2X_t) dt+ \diffc X_t dW_t$, \qquad with $B_1=\diffc=1$, $B_2 =2$ & ~and $x_0 = 1$.
\end{tabular}
\end{center}
As mentioned, the SDE in {\bf Case 9} has explicit solution given by (see Example 5.1 in \cite{mao2021positivity})
\[X_t = \frac{\exp\{(B_1-\tfrac{\diffc^2}2)t\}+\diffc W_t}{1+B_2\int_0^t\exp\{(B_1-\tfrac{\diffc^2}2)s+\diffc W_s\}ds},\]
where the integral will be approximated using trapezoid rule. It is noteworthy that  {\bf Case 9} resembles a geometric Brownian motion. Thus, a rate of convergence of the \expSch scheme faster than $1/2$ is expected.

Following Theorem \ref{thm:strong_rate_stopped}, we consider also the stopped error at time  $\texp$ introduced in \eqref{eq:texp}: 
$$\sqrt{\EE\left[\sup_{0\leq t\leq T}|X_{t\wedge \texp} - \overline{X}_{t\wedge \texp}|^2\right]}, \quad\text{with}\quad\texp = \inf\{ s\geq 0; \oX_{\eta(s)} > \Dt^{-\frac{1}{ 2(\alpha - 1)}} \}.$$ 
This error is expected to perform better in {\bf Cases 6, 7} and {\bf 8} as the threshold in stopping time is intended to stop the process as soon as the numerical approximation is very high. We limit the comparison  between the stopped and unstopped convergence to the values $\Dt = 2^{-q}$, with $q = 10, 6,\ldots, 20$, since  when $\Dt$ is too large the process reaches the threshold irrelevantly.  Results are shown in Figure \ref{im:stopped_error}. 
\begin{figure}[H]
\centering
{\includegraphics[height=0.33\textwidth,width=0.49\textwidth]{./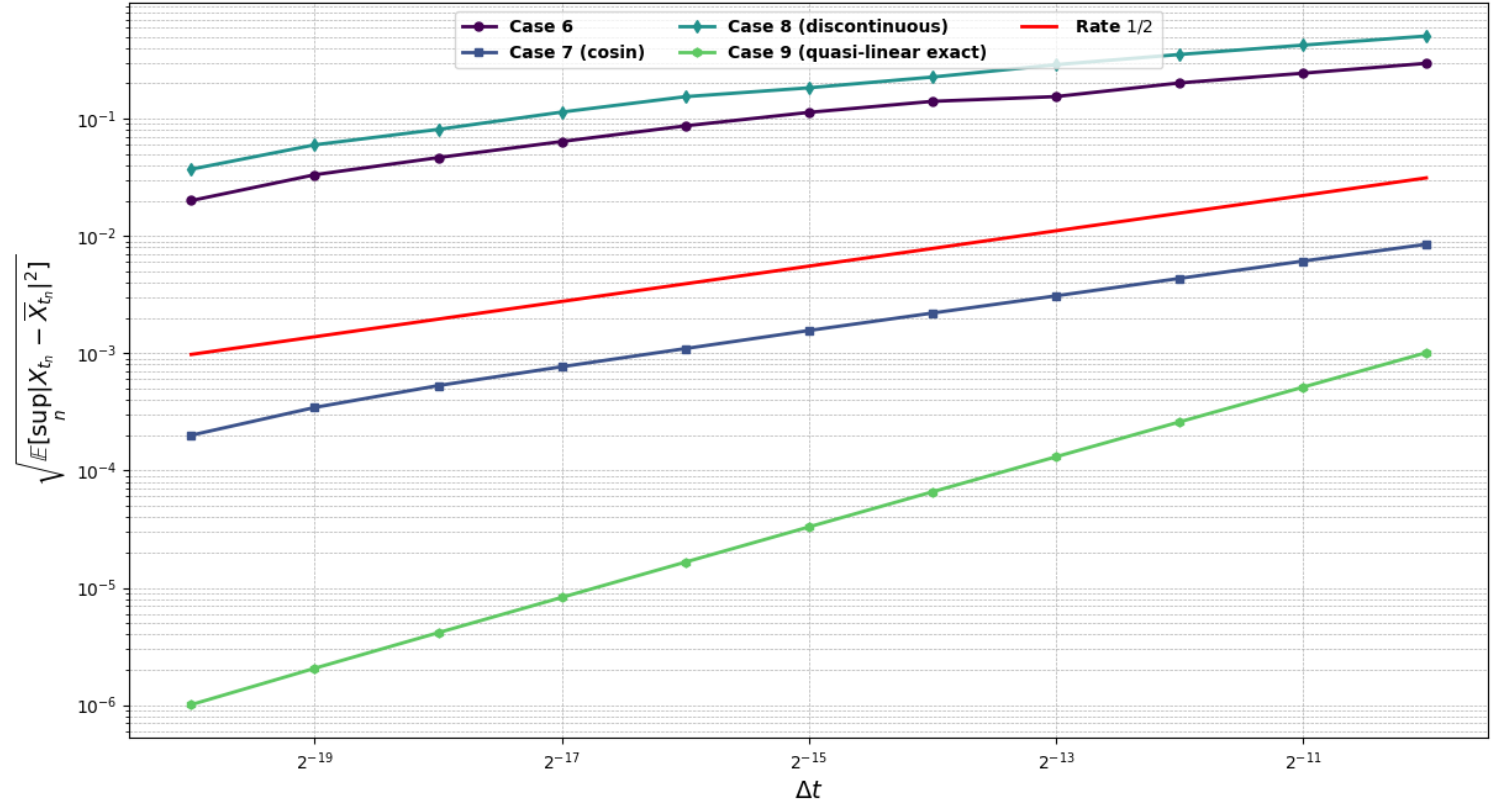}}
{\includegraphics[height=0.33\textwidth,width=0.49\textwidth]{./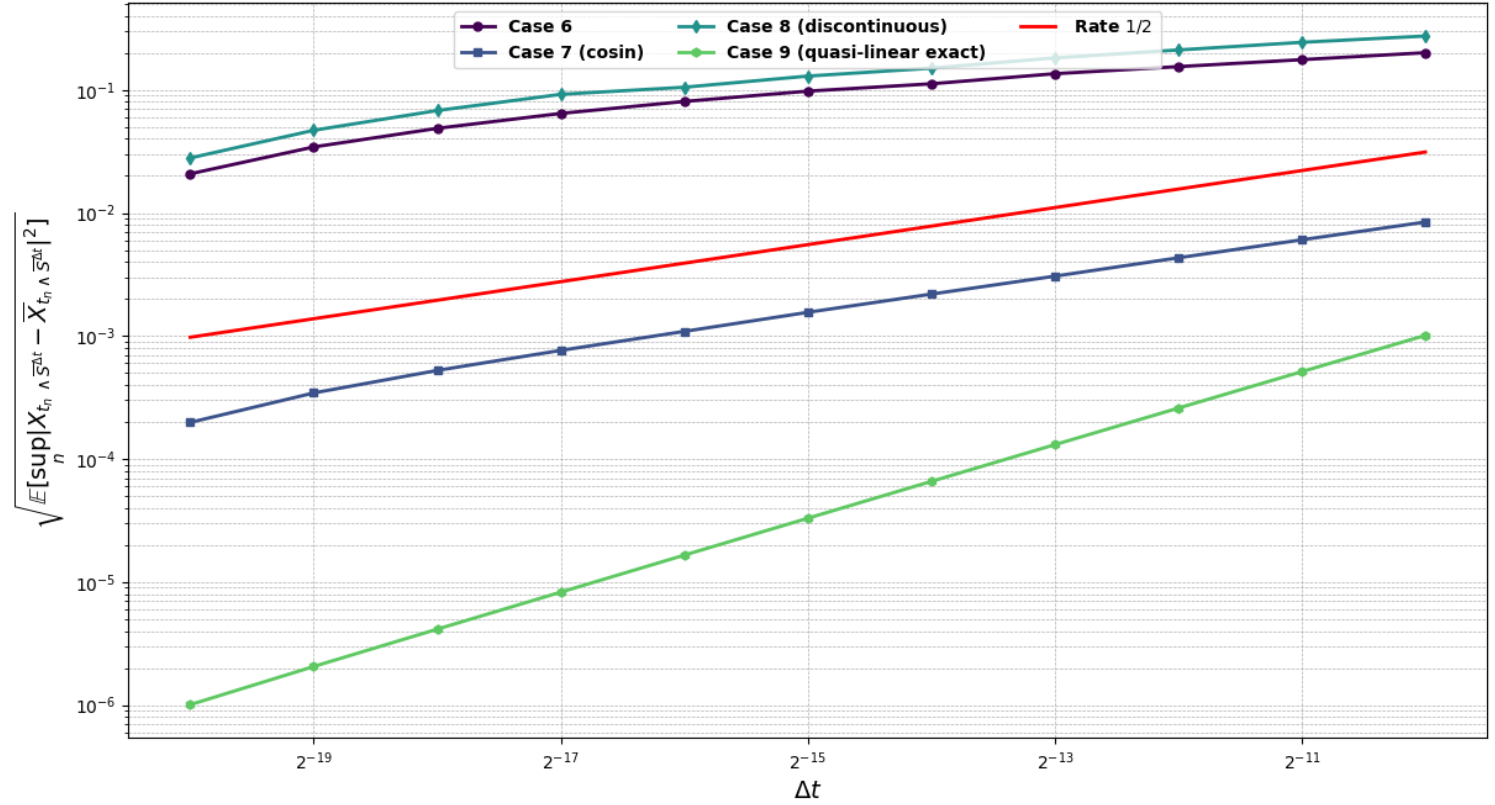}}
\caption{ Strong approximation error (left) and stopped strong approximation error (right) for the \expSch scheme applied to  \eqref{eq:proto_case}, with  {\bf Cases 6  to 9} (in log-log scale), the strong error is compared with the reference slope of order 1/2 (in red).\label{im:stopped_error}}
\end{figure}

Figure \ref{im:stopped_error} shows a convergence of order $1/2$ for the {\bf Cases 6,7}, as stated in Theorem \ref{thm:strong_rate_stopped}, even for the non-stopped error, possibly due to the control given by the cosine function. A convergence of order $1/2$ for the discontinuous {\bf Case 8} is observed, provided that $\Dt$ is small enough, otherwise the convergence can be slowed down somewhat, as stated in Theorem \ref{thm:strong_rate_non_stopped}. Regarding the {\bf Case 9}, Figure \ref{im:stopped_error} exhibits a faster convergence, as expected.

At first glance, the difference between the convergence of the strong and stopped strong errors is not noticeable. a deeper analysis of the results shows that by considering the stopping time $\texp$, the error's variance of the stopped  error stabilizes instead of increasing when $\Dt$ is large and increasing. It also decreases and coincides with the unstopped error when $\Dt$ is small enough and  decreasing, as the role of the stopping time fades. This is illustrated in the most exploding  {\bf Cases 6 \& 8} in Figure \ref{im:variance_error}.
\begin{figure}[ht!]
\centering
{\includegraphics[width=0.55\textwidth,trim={1.6cm 1.0cm 1.6cm 0.4cm},clip]{./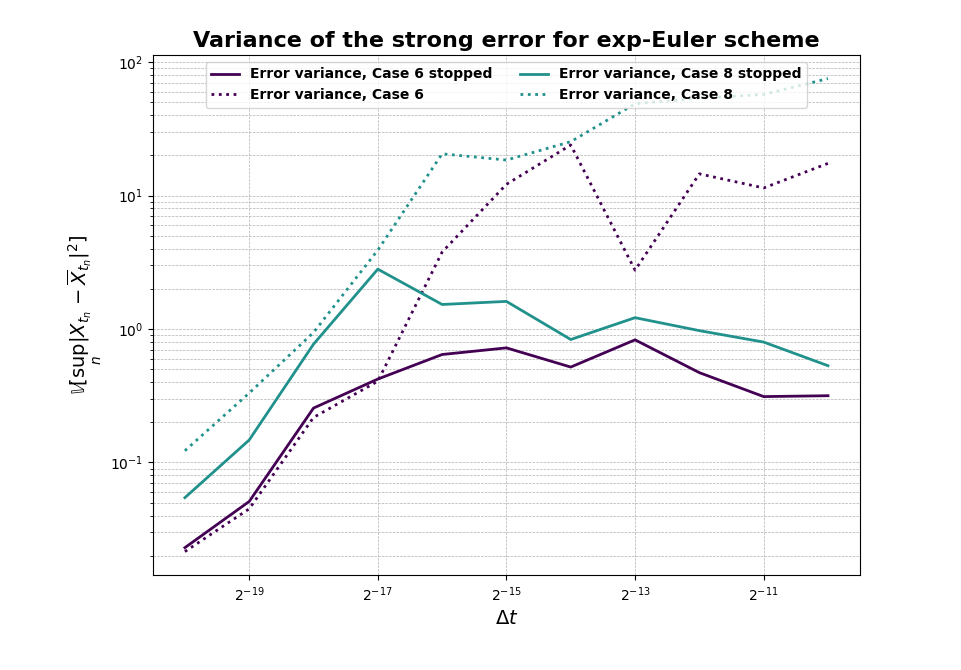}}
\caption{ Variance of the approximation error (stopped and non-stopped) for the \expSch scheme applied in {\bf Cases 6  and 8} (in log-log scale).\label{im:variance_error}}
\end{figure}

\paragraph*{Stability. } 
From the above experiments, we can notice that the \expSch scheme performs very well, even when the sufficient convergence conditions are not satisfied. We now want to illustrate the ability of the numerical scheme to stabilize the approximations when the number of iterations for time integration is large. To this aim, we consider for simplicity the prototypical SDE \eqref{eq:proto_case} with time-step $b(0)=0$ and the rest of parameters as in {\bf Case 1}, meaning: 
\[dX_t = (X_t - 6 X_t^3)dt + X_t^2 dW_t,\quad X_0 = 1.\]

As shown in Proposition \ref{prop:asympt_sol}, the exact process will cross infinitely many times the threshold $\xi$ given by the solution of the equation $\frac{b(0)}\xi +B_1-(B_2+\tfrac{\diffc^2}2)\xi^{2(\alpha-1)} = 0$; in this case, $\xi^2 = \tfrac2{13}$ which coincides with the limit $\Dt=0$ of the scheme thresholds $\xi_{\Delta t}$ and $\xi^{\Delta t}$ given in Proposition \ref{prop:asympt_disc}. A trajectory generated by the \expSch scheme with a time-step of $\Delta t = 0.001$ and terminal time $T=50$ is depicted in Figure \ref{im:stability}. It is clear from the figure that, for large times, the trajectories consistently remain in proximity to $\xi$. The stability exhibited by numerical approximations ensures that any potential accumulation of numerical errors throughout the iterations of the scheme does not disrupt the accurate estimation of statistical quantities.

\begin{figure}[H]
\centering
{\includegraphics[width=0.65\textwidth]{./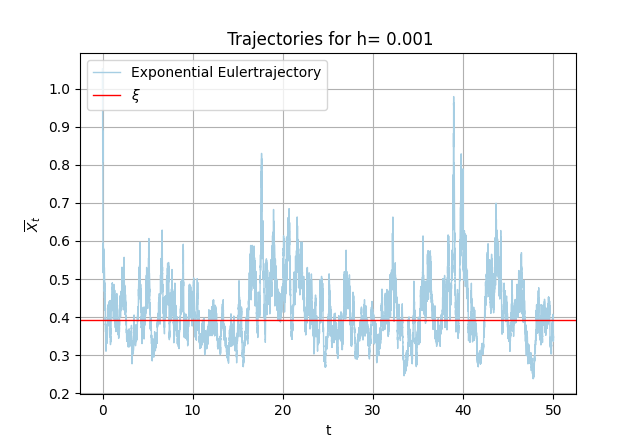}}
\caption{\expSch approximation of the trajectories of the solution to $dX_t = (X_t - 6 X_t^3)dt + X_t^2 dW_t$, $X_0 = 1$, crossing infinite times the threshold $\xi^2=\tfrac{2}{13}$ (horizontal line in red) as shown in propositions \ref{prop:asympt_sol} and \ref{prop:asympt_disc}.\label{im:stability}}
\end{figure}

\bigskip
\paragraph*{Acknowledgement}
The second author acknowledges the support of ANID FONDECYT/POSTDOCTORADO N$^{\circ}$ 321011.


\appendix
\section{Appendix}

\subsection{Proof of Theorem \ref{thm:strong_rate_non_stopped}}\label{sec:proof thm regular}
Throughout the proof, the positive constant $C$ may  change from line to line. It depends on $T$, $p$ and all the parameters in the hypotheses, but not on $\Dt$ and $\epsilon$. To simplify the presentation, we consider in \ref{H:pieceloclip} only one point of discontinuity $\pntDisc_m$, the other cases being just a sum of  contributions similar to that one. From \eqref{eq:error_gamma_proto}, we start the proof with 
\begin{equation*}
d \EE[Y_t^{2p}] \leq \sum_{i=1}^5 \EE[T_i(t)] \; dt. 
\end{equation*}
The contribution $T_1+T_2+T_3$ is treated jointly  while $\EE[|T_4| + |T_5|]$ is controlled using \eqref{eq:estimation_T45}, assuming  \eqref{eq:condition_thm_unstopped_1} or \eqref{eq:condition_thm_unstopped_continuous_1}.

From  \ref{H:control} and \ref{H:diffusion_deriv} we can bound the term $T_1$ as:
\begin{equation*}
T_1(t)
 \leq   {2p} Y_t^{2p} \left\{B_1  +  2(2p-1) \diffc^2 X_t^{2(\alpha-1)} - B_2 X_t^{\alphab-1} \right\}.
\end{equation*}
First, we isolate the local error terms by adding the needed pivots  in $T_2$ and $T_3$, obtaining the following decomposition:  
\begin{align}\label{eq:proof_rate1_aux0}
\begin{aligned}
(T_1 + T_2 + T_3)(t) \leq & {2p} Y_t^{2p} \Big\{B_1  + 2(2p-1) \diffc^2 X_t^{2(\alpha-1)} - B_2 X_t^{\alphab-1}  \\
& \quad\left. + \ 2(2p -1) \frac{\oX_t^{2}}{Y^2_t}\left(\tfrac{\sigma(X_t)}{X_t}-\tfrac{\sigma(\oX_{t})}{\oX_t}\right)^2
+  \frac{\oX_t}{Y_t}   \left(\tfrac{b(X_t)-b(0)}{X_t} - \tfrac{b(\oX_{t})-b(0)}{\oX_{t}}\right)\right\} \\
&  +  2p Y_t^{2p-1}  \oX_t   \left\{ \tfrac{b(\oX_t)-b(0)}{\oX_t} - \tfrac{b(\oX_{\eta(t)})-b(0)}{\oX_{\eta(t)}} \right\} \\ & + 4 p (2p -1) Y_t^{2p-2}   \oX_t^{2} \left(\tfrac{\sigma(\oX_t)}{\oX_t}-\frac{\sigma(\oX_{\eta(t)})}{\oX_{\eta(t)}}\right)^2\\
& \quad := G_0 + G_1 + G_2. 
\end{aligned}
\end{align} 
The only place where the discontinuity on $b$ has to be discussed is in the local error  isolated in $G_1$ and already bounded in 
Lemma \ref{lem:disc_contrib} with 
\begin{equation*}
\begin{aligned}
\EE\left[G_1\right] 
& \leq C \EE[ Y_t^{2p} ] +  C  \Dt^{p (1 - 2\varepsilon)}  + \EE\left[|Y_t|^{2p-1}  \ind_{  \left\{\oX_{[\eta(t), \eta(t) + \Dt]}\in B(\pntDisc_m , 8\diffc  (\tfrac{3}{2}  \pntDisc_m)^{\alpha}\ \Dt^{1/2-\varepsilon} )\right\}}\right],
\end{aligned}
\end{equation*}
under the choice of $\Delta(\varepsilon)$ and the condition \eqref{eq:condition_thm_unstopped_1}, or \eqref{eq:condition_thm_unstopped_continuous_1} when $b$ is continuous.

\paragraph{ We consider first $G_0$ in \eqref{eq:proof_rate1_aux0}. } Rewriting it as 
\begin{align}\label{eq:proof_rate1_aux}
G_0= {2p} Y_t^{2p} \left\{B_1  + 2(2p-1)\diffc^2 X_t^{2(\alpha-1)} - B_2 X_t^{\alphab-1} + \oX_t E_t \right \} 
 \end{align}
where
\[\oX_t E_t = 2(2p -1)  \frac{\oX_t^{2}}{(X_t - \oX_t)^2} \left\{\frac{\sigma(X_t)}{X_t} - \frac{\sigma(\oX_{t})}{\oX_{t}}\right\}^2+ \frac{\oX_t}{X_t - \oX_t}\left\{\frac{b(X_t)-b(0)}{X_t} - \frac{b(\oX_{t})-b(0)}{\oX_{t}}\right\}.\]
We apply  Lemma \ref{lem:estimation_from_Hquatre}: 
\begin{equation*}
\oX_t E_t\leq \oX_t\  \left\{ 4(2p -1) ( \diffc^2 +  (\alpha\diffcd)^2 ) \  (X_t\vee\oX_t)^{2\alpha-3}  - \ \Lg ((\alphab -1) \wedge 1) \ (X_t\vee\oX_t)^{\alphab -2} \right\} + \Cczero, 
\end{equation*}
for $\Cczero$ defined in \eqref{eq:constants_Ccercled}. 
When  $\alphab=2\alpha-1$, by imposing $\Lg  ((\alphab-1) \wedge 1) > 4(2p-1) (\diffc^2 +  (\alpha\diffcd)^2)$ in condition \eqref{eq:condition_thm_unstopped_3} and $B_2 > 2(2p-1) \diffc^2$ in \eqref{eq:condition_thm_unstopped_1},  the first term above  and  also the first term in $G_0$ are  bounded by zero. Then, the bound for  \eqref{eq:proof_rate1_aux} can be written as:
\begin{align}\label{eq:proof_aux1}
G_0\leq  {2p} Y_t^{2p} (B_1  +  \Cczero).
\end{align}
When $\alphab>2\alpha-1$, we can identify a constant to bound such term,  observing that,  for any $a>0$, the map $z\mapsto \psi_a(z):= a \ z^{2(\alpha-1)}- \Lg ((\alphab -1) \wedge 1)  z^{\alphab-1}$ satisfies, for $x_{a}:=\left(\tfrac{a \ 2(\alpha-1)}{\Lg ((\alphab -1) \wedge 1) \ (\alphab-1)}\right)^{1/(\alphab-2\alpha+1)}$, 
\[\mbox{$\psi_a(z)\leq \psi_a(x_{a}) = a^{(\alphab-1)/(\alphab-2\alpha+1)}~\psi_1(x_1) :=\widetilde{\psi}$}.\]
From this, we get a similar bound as in  \eqref{eq:proof_aux1}, with $\Cczero$ replaced by $\Cczero + \widetilde{\psi}$.
\paragraph{We consider $\EE[G_2]$ in \eqref{eq:proof_rate1_aux0}. } 
Taking expectation,   and using Young inequality we have
\begin{equation*}
\begin{aligned}
\EE[G_2] = &  4 p (2p -1) \EE\left[Y_t^{2p-2} \oX_t^{2} \  \left(\tfrac{\sigma(\oX_t)}{\oX_t}-\tfrac{\sigma(\oX_{\eta(t)})}{\oX_{\eta(t)}}\right)^2\right]\\
&\leq  4 (2p -1)  \left( (p-1) \EE[Y_t^{2p}] + 
\EE\left[(\oX_t - \oX_{\eta(t)})^{2p}\left\{\frac{\oX_t}{\oX_t - \oX_{\eta(t)}}\left(\tfrac{\sigma(\oX_t)}{\oX_t}-\tfrac{\sigma(\oX_{\eta(t)})}{\oX_{\eta(t)}}\right)\right\}^{2p}\right]\right).
\end{aligned}
\end{equation*}
Then, from Lemma \ref{lem:estimation_from_Hquatre}-\text{($\sigma$)} and H\"older inequality in the second term  above,  we obtain
\begin{equation}\label{eq:proof_th32_aux1}
\begin{aligned}
\EE[G_2] \leq &  4 (2p -1) \diffc^2 \left((p-1) \EE[Y_t^{2p}] +
2^p(\diffc^2 +(\alpha \diffcd)^2 )^p\ \EE\left[(\oX_t - \oX_{\eta(t)})^{2p}(\oX_t\vee\oX_{\eta(t)})^{2p}\right]\right)\\
&\leq  4 (2p -1) \diffc^2 \left((p-1) \EE[Y_t^{2p}] +
2^p(\diffc^2 +(\alpha \diffcd)^2 )^p \|\oX_t - \oX_{\eta(t)}\|_{L^{2p r_1}(\Omega)}^{2p} \sup_{t\in[0,T]}\EE^{\frac{r_1-1}{r_1}}[\oX_t^{2p\frac{r_1}{r_1-1}}] \right),
\end{aligned}
\end{equation}
for some arbitrary exponent $r_1>1$ to be chosen according to the conditions of Lemmas \ref{lem:local_error_prev} and \ref{lem:Schememoments} to control the $L^{2pr_1}$-norm of the local error and the required moments, imposing 
\[ \ind_{\{2\alpha-1\}}(\alphab)\, p  \ \big(r_1  (2\alpha -1) \vee \frac{r_1}{r_1-1} \big) \leq \frac{1}{2} +\frac{B_2}{\diffc^2}, \]
that can be balanced by the choice $r_1 = 1 + \frac{1}{2\alpha-1}$, becoming $\ind_{\{2\alpha-1\}}(\alphab) \ 2 \alpha  p   \leq \frac{1}{2} +\frac{B_2}{\diffc^2}$, covered by the condition \eqref{eq:condition_thm_unstopped_1}, 
 leading to 
\begin{equation*}
\EE[G_2] \leq C(p) \EE[Y^{2p}_t]  +  C(p)  \Dt^p.
\end{equation*}
 
\paragraph{Putting all the $G_i$ together and adding $\EE[|T_4| + |T_5|]$,  }
for some  positive constant $C$, for $\Dt \leq \Delta(\epsilon)$, 
\begin{equation}\label{eq:bound_g1_discontinuous} 
\EE[Y_t^{2p}]
\leq C \left\{\int_0^t  \EE[Y_s^{2p}] ds +  \Dt^{p-2\epsilon p}  + 
\EE\left[\int_0^t  |Y_s|^{2p-1}  \ind_{\{ \oX_{[\eta(s), \eta(s) + \Dt]} \in B(\pntDisc_m , 8\diffc  (\tfrac{3}{2}  \pntDisc_m)^{\alpha}\ \Dt^{1/2-\varepsilon} )\}}  ds \right] \right\}. 
\end{equation}
When $b$ is continuous, \eqref{eq:bound_g1_discontinuous} reduces to
\begin{equation}\label{eq:bound_g1_continuous}
\EE[Y_t^{2p}]
\leq C \left\{\int_0^t  \EE[Y_s^{2p}] ds +  \Dt^{p} \right\}, 
\end{equation}
with $\Dt\leq 1$ and under the condition \eqref{eq:condition_thm_unstopped_continuous_1}. 
We end the proof applying the change of time technique of the proof of Theorem \ref{thm:strong_rate_stopped}. But, now  the map $t\mapsto {\bf\textrm H}(t)$ in \eqref{eq:def_h_general}, defining the change of time,  is simply reduced to 
\begin{equation*}
{\bf\textrm H}(t) =   C \ \int_0^t  \Dt^{-(\frac{1}{2} -\varepsilon)} \ind_{\{ \oX_{[\eta(s), \eta(s) + \Dt]} \in B(\pntDisc_m , 8\diffc  (\tfrac{3}{2}  \pntDisc_m)^{\alpha}\ \Dt^{1/2-\varepsilon} )\}}  \ ds
\end{equation*}
and $\EE[\exp\{ (p+1) {\bf\textrm H}(T)\} ]$ is bounded according to Lemma \ref{lem:occupation_time}.

\subsection{Technical lemmas and proofs}\label{sec:addproofs} 

Throughout the article we make use of Lenglart's inequality, which we reproduce here for the reader in its sharp version from  \cite{geiss_scheutzow_2023}. 
\begin{lem}{\cite[Corollary II]{Lenglart_77}}\label{lem:Lenglart_sharp}
Let $X$ be a non-negative right-continuous ${\mathcal {F}}_{t}$-adapted process and let G be a non-negative right-continuous non-decreasing predictable process such that ${\displaystyle \mathbb {E} [X(\tau )\mid {\mathcal {F}}_{0}]\leq \mathbb {E} [G(\tau )\mid {\mathcal {F}}_{0}]<\infty }$ for any bounded stopping time $\tau$. Then
\begin{equation*}
{\displaystyle \forall \ r\in (0,1),\mathbb {E} \left[\left(\sup _{t\geq 0}X(t)\right)^{r}{\Big \vert }{\mathcal {F}}_{0}\right]\leq c_{r}\ \mathbb {E} \left[\left(\sup _{t\geq 0}G(t)\right)^{r}{\Big \vert }{\mathcal {F}}_{0}\right],\quad{\text{where }}c_{r}:={\frac {r^{-r}}{1-r}}.}
\end{equation*}
\end{lem}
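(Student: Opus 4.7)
The plan is to derive the sharp moment inequality in two stages: first a weak-type (distributional) Lenglart-domination for $X^{\ast}:=\sup_{t\geq 0}X(t)$ in terms of $G^{\ast}:=\sup_{t\geq 0}G(t)=G(\infty)$, then a layer-cake integration whose threshold is taken linearly in $\lambda$ with exactly the slope that optimises the resulting constant.

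For the weak-type step, fix $\lambda,\mu>0$ and introduce the stopping times $\sigma_{\lambda}=\inf\{t:X(t)\geq\lambda\}$ and $\rho_{\mu}=\inf\{t:G(t)\geq\mu\}$; right-continuity of $X$ makes $\sigma_{\lambda}$ a stopping time with $X(\sigma_{\lambda})\geq\lambda$ on $\{\sigma_{\lambda}<\infty\}$, and predictability plus monotonicity of $G$ give $G(\rho_{\mu})\leq\mu$ on $\{\rho_{\mu}<\infty\}$. For every $N>0$ the bounded stopping time $\tau_{N}:=\sigma_{\lambda}\wedge\rho_{\mu}\wedge N$ satisfies $X(\tau_{N})\geq\lambda$ on $\{\sigma_{\lambda}\leq\rho_{\mu}\wedge N\}$ and $G(\tau_{N})\leq G(\rho_{\mu}\wedge N)\leq\mu\wedge G^{\ast}$. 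Hence the domination hypothesis yields
\begin{equation*}
\lambda\,\PP(\sigma_{\lambda}\leq\rho_{\mu}\wedge N\mid\FF_{0})\leq \EE[X(\tau_{N})\mid\FF_{0}]\leq \EE[G(\tau_{N})\mid\FF_{0}]\leq \EE[\mu\wedge G^{\ast}\mid\FF_{0}].
\end{equation*}
Letting $N\uparrow\infty$ and using $\{X^{\ast}\geq\lambda\}\subset\{\sigma_{\lambda}\leq\rho_{\mu}\}\cup\{G^{\ast}\geq\mu\}$ produces the conditional Lenglart bound
\begin{equation*}
\PP(X^{\ast}\geq\lambda\mid\FF_{0})\leq \tfrac{1}{\lambda}\,\EE[\mu\wedge G^{\ast}\mid\FF_{0}]+\PP(G^{\ast}\geq\mu\mid\FF_{0}).
\end{equation*}

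For the moment step, use the layer-cake representation $\EE[(X^{\ast})^{r}\mid\FF_{0}]=r\int_{0}^{\infty}\lambda^{r-1}\PP(X^{\ast}\geq\lambda\mid\FF_{0})\,d\lambda$, insert the distributional bound with the free linear choice $\mu=c\lambda$ ($c>0$), and swap expectation and integral by Tonelli. Splitting the $\lambda$-integral of the first summand at $\lambda=G^{\ast}/c$, and substituting $u=c\lambda$ in the second, gives the elementary closed-form inequality
\begin{equation*}
\EE[(X^{\ast})^{r}\mid\FF_{0}]\leq\Bigl(\tfrac{c^{1-r}}{1-r}+c^{-r}\Bigr)\EE[(G^{\ast})^{r}\mid\FF_{0}].
\end{equation*}
Minimising the prefactor on $(0,\infty)$ reduces to solving $c^{-r}-rc^{-r-1}=0$, whose unique solution is $c=r$, with minimum value $\tfrac{r^{1-r}}{1-r}+r^{-r}=\tfrac{r^{-r}}{1-r}=c_{r}$. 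This both yields the asserted constant and explains why the optimal layer-cake threshold grows linearly in $\lambda$ with slope equal to $r$.

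The main obstacle is not in any single estimate but in recognising that the naive choice $\mu=\lambda$ only recovers the classical constant $(2-r)/(1-r)$; the sharp constant requires explicitly evaluating the $\lambda$-integral and optimising in $c$. The other delicate point is the predictability hypothesis on $G$ in the weak-type step: without it, the right-continuous hitting time $\rho_{\mu}$ could sit at a jump with $G(\rho_{\mu})>\mu$, producing an overshoot that destroys the sharp constant. The remaining ingredients -- right-continuity of $X$ so that $X(\sigma_{\lambda})\geq\lambda$ on $\{\sigma_{\lambda}<\infty\}$, monotone convergence as $N\to\infty$ using the assumed finiteness $\EE[G(\tau)\mid\FF_{0}]<\infty$, and Tonelli in the layer-cake step -- are entirely standard.
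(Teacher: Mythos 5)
The paper does not actually prove this lemma: it is quoted as Corollary II of Lenglart (1977) in the sharp form established by Geiss and Scheutzow (2021), so there is no in-paper argument to compare yours against. Your proof is the standard route to the sharp constant --- Lenglart's weak-type domination inequality followed by a layer-cake integration with threshold $\mu=c\lambda$ and optimisation over $c$ --- and the computation is correct: the first summand integrates to $\frac{c^{1-r}}{1-r}\,\mathbb{E}[(G^{\ast})^{r}\mid\mathcal{F}_0]$, the second to $c^{-r}\,\mathbb{E}[(G^{\ast})^{r}\mid\mathcal{F}_0]$, and the minimum of $c\mapsto \frac{c^{1-r}}{1-r}+c^{-r}$ over $(0,\infty)$ is attained at $c=r$ with value $\frac{r^{-r}}{1-r}=c_r$.

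The one step that is wrong as written is the claim that ``predictability plus monotonicity of $G$ give $G(\rho_{\mu})\leq\mu$ on $\{\rho_{\mu}<\infty\}$''. This is false: take $G(t)=2\mu\,\ind_{\{t\geq 1\}}$, which is deterministic (hence predictable), right-continuous and non-decreasing, yet $G(\rho_{\mu})=2\mu>\mu$. What is true in general is only $G(\rho_{\mu}-)\leq\mu$. Predictability is used precisely to repair this overshoot: since $G$ is predictable and increasing, $\rho_{\mu}$ is a predictable time and admits an announcing sequence $\rho_{\mu}^{(n)}\uparrow\rho_{\mu}$ with $\rho_{\mu}^{(n)}<\rho_{\mu}$ on $\{\rho_{\mu}>0\}$, so that $G(\rho_{\mu}^{(n)})\leq G(\rho_{\mu}-)\leq\mu$; one then runs your argument with $\tau_{n,N}=\sigma_{\lambda}\wedge\rho_{\mu}^{(n)}\wedge N$ and passes to the limit in $n$ and $N$. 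You flag exactly this issue in your closing discussion, but the proof body asserts the conclusion without carrying out the announcing-sequence step, so as written the weak-type inequality is not justified. With that repair inserted, the argument is complete and delivers the stated sharp constant.
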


\begin{lem}\label{lem:equotient_Y_estimation}
When $\zeta\geq \theta>0$, for any $x,y>0$, we have
\begin{equation}\label{eq:quotient_estimation}
\{\lfloor\tfrac\zeta\theta\rfloor\, (x\wedge y)^{\zeta-\theta}\}\vee\{(x\vee y)^{\zeta-\theta}\}\leq \frac{x^{\zeta}-y^{\zeta}}{x^\theta-y^\theta}\leq \tfrac\zeta\theta (x\vee y)^{\zeta-\theta}.
\end{equation}  
When $\zeta>0$ and $\theta>0$, for all $x,y>0$, we have
\begin{equation}\label{eq:quotient_estimation_bis}
\left|\frac{x^{\zeta}-y^{\zeta}}{x^\theta-y^\theta}\right| 
\leq  \left\{ (x\wedge y)^{(\zeta-\theta)} \right\}\vee \left\{\tfrac\zeta\theta (x\vee y) ^{(\zeta-\theta)}\right\}.
\end{equation} 
\end{lem}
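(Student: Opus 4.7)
The plan is to reduce to $x > y > 0$ by the symmetry of the ratio under $x \leftrightarrow y$ (the case $x = y$ being trivial by continuity), and then combine two complementary tools: Cauchy's mean value theorem applied to $t \mapsto t^\zeta$ and $t \mapsto t^\theta$ on $[y,x]$, and an elementary algebraic factorization. Cauchy's MVT yields at once
\begin{equation*}
\frac{x^\zeta - y^\zeta}{x^\theta - y^\theta} = \frac{\zeta}{\theta}\,\xi^{\zeta-\theta}, \quad \text{for some } \xi \in (y, x).
\end{equation*}
When $\zeta \geq \theta$, monotonicity of $t \mapsto t^{\zeta-\theta}$ delivers simultaneously the upper bound $\tfrac{\zeta}{\theta}(x\vee y)^{\zeta-\theta}$ of part 1 and the weak form $\tfrac{\zeta}{\theta}(x\wedge y)^{\zeta-\theta}$ of the lower bound (which a fortiori dominates $\lfloor \zeta/\theta\rfloor (x\wedge y)^{\zeta-\theta}$).

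For the sharper lower bound $(x\vee y)^{\zeta-\theta}$ in part 1, I would normalize by $u := y/x \in (0,1)$ and rewrite the ratio as $x^{\zeta-\theta}\cdot\frac{1-u^\zeta}{1-u^\theta}$; since $u^\zeta \leq u^\theta$ when $\zeta \geq \theta$, the fraction is at least $1$. For the floor version $\lfloor\zeta/\theta\rfloor(x\wedge y)^{\zeta-\theta}$, which is not a consequence of Cauchy's MVT (the factor $\zeta/\theta$ cannot be replaced by $\lfloor\zeta/\theta\rfloor$ using calculus alone), I would write $\zeta = m\theta + r$ with $m = \lfloor\zeta/\theta\rfloor$ and $r \in [0,\theta)$, set $a := x^\theta > b := y^\theta > 0$, and exploit the decomposition
\begin{equation*}
x^\zeta - y^\zeta = x^r\,(a^m - b^m) + b^m\,(x^r - y^r),
\end{equation*}
together with the standard geometric-sum bound $a^m - b^m \geq m(a-b)\,b^{m-1}$ and the positivity of $x^r - y^r$ (which uses $r \geq 0$ and $x \geq y$).

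Part 2 will then follow from part 1 by a swap argument. When $\zeta \geq \theta$, it is merely a restatement of the upper bound of part 1. When $0 < \zeta < \theta$, applying part 1 with the roles of $\zeta$ and $\theta$ exchanged to $\frac{x^\theta - y^\theta}{x^\zeta - y^\zeta} \geq (x\vee y)^{\theta-\zeta}$ and taking reciprocals gives $\left|\tfrac{x^\zeta - y^\zeta}{x^\theta - y^\theta}\right| \leq (x\vee y)^{\zeta-\theta} \leq (x\wedge y)^{\zeta-\theta}$, where the last inequality holds because $\zeta - \theta < 0$. The only step I expect to require genuine insight is the algebraic decomposition used for the floor lower bound; once identified, the verification is straightforward, and all other bounds fall out of Cauchy's MVT or the monotonicity of $u \mapsto \frac{1-u^\zeta}{1-u^\theta}$ on $(0,1)$.
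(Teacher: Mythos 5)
Your proof is correct, and its skeleton essentially matches the paper's: the upper bound comes from the same calculus fact (your Cauchy mean value theorem is the paper's integral representation of $\frac{\widetilde{x}^{\zeta/\theta}-\widetilde{y}^{\zeta/\theta}}{\widetilde{x}-\widetilde{y}}$ after substituting $\widetilde{x}=x^\theta$, $\widetilde{y}=y^\theta$); the floor lower bound rests on the same division-algorithm factorization plus the geometric-sum estimate (writing $\zeta=m\theta+r$ and expanding $a^m-b^m$ is exactly the paper's factorization involving $\lfloor\zeta/\theta\rfloor$ after the power substitution); and the second claim is obtained by the same reciprocal/swap argument, ending with $(x\vee y)^{\zeta-\theta}\leq(x\wedge y)^{\zeta-\theta}$ when $\zeta<\theta$. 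The only real variations are minor: you prove the $(x\vee y)^{\zeta-\theta}$ component of the lower bound by normalizing $u=y/x$ and noting $\frac{1-u^\zeta}{1-u^\theta}\geq 1$, whereas the paper reads it off the leading term of the same geometric sum, so you handle the two components of the max by separate arguments while the paper gets both at once. One remark: your claim that the floor bound ``is not a consequence of Cauchy's MVT'' contradicts your own earlier (correct) parenthetical --- since the MVT gives the ratio as $\frac{\zeta}{\theta}\xi^{\zeta-\theta}$ with $\xi$ between $x\wedge y$ and $x\vee y$, it already yields the stronger lower bound $\frac{\zeta}{\theta}(x\wedge y)^{\zeta-\theta}\geq\lfloor\tfrac{\zeta}{\theta}\rfloor(x\wedge y)^{\zeta-\theta}$, so the algebraic decomposition is a valid but redundant second proof of that component; this is an inconsistency of commentary, not a gap in the argument.
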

\begin{proof}
Let $\xi\geq1$ and $\lfloor\xi \rfloor = \max\{n\in \mathbb{N}:\ n\leq \xi\}$. Then, assuming for example that $x>y$,  we have $(\tfrac{y}{x})^{\xi-\lfloor\xi \rfloor}\leq 1$, and 
\begin{equation}\label{eq:factorization}
x^\xi - y^\xi = x^{\xi-\lfloor\xi \rfloor}\left(x^{\lfloor\xi \rfloor}-y^{\lfloor\xi \rfloor}(\tfrac{y}{x})^{\xi-\lfloor\xi \rfloor}\right)\geq x^{\xi-\lfloor \xi \rfloor}(x-y)\left(x^{\lfloor\xi \rfloor - 1}+x^{\lfloor\xi \rfloor - 2}y+\ldots+y^{\lfloor\xi \rfloor-1}\right).
\end{equation}	
Consider the case with exponents $\zeta\geq\theta>0$, and set $\widetilde{x} = x^\theta$ and $\widetilde{y} = y^\theta$. Then, with for example $x > y\geq 0$, from \eqref{eq:factorization} we obtain
\begin{equation*}
\begin{aligned}
\frac{x^{\zeta}-y^{\zeta}}{x^\theta-y^\theta} = \frac{\widetilde{x}^{\zeta/\theta}-\widetilde{y}^{\zeta/\theta}}{\widetilde{x}-\widetilde{y}} 
& \geq \widetilde{x}^{\tfrac{\zeta}{\theta} - \lfloor \tfrac{\zeta}{\theta}\rfloor}\left(\widetilde{x}^{\lfloor\tfrac\zeta\theta\rfloor -1} + \widetilde{x}^{\lfloor\tfrac\zeta\theta\rfloor - 2}\widetilde{y} + \ldots+ \widetilde{y}^{\lfloor\tfrac\zeta\theta\rfloor - 1} \right)\\
&\qquad  \geq \widetilde{x}^{\tfrac\zeta\theta - 1}\vee \left\{\lfloor\tfrac\zeta\theta\, \rfloor\widetilde{y}^{\tfrac\zeta\theta - 1}\right\} = {x}^{\zeta -\theta}\vee \left\{\lfloor\tfrac\zeta\theta\, \rfloor {y}^{\zeta -\theta }\right\} .
\end{aligned}
\end{equation*}
On the other hand, since $z\mapsto z^{{\zeta}/{\theta}-1}$ is increasing:
\begin{equation*}
\frac{x^{\zeta}-y^{\zeta}}{x^\theta-y^\theta} = \frac1{\widetilde{x}-\widetilde{y}}{\int_{\widetilde{y}}^{\widetilde{x}} \frac\zeta\theta z^{\zeta/\theta - 1} dz} \leq  \frac\zeta\theta \ \widetilde{x}^{\zeta/\theta - 1} =  \frac\zeta\theta \ {x}^{\zeta -\theta}.
\end{equation*}
Combining the two last inequalities leads to \eqref{eq:quotient_estimation}. 

Consider now the exponents $0< \zeta\leq\theta$. Using the previous lower-bound, we also get
\begin{equation*}
	\frac{x^\zeta-y^\zeta}{x^\theta-y^\theta}\leq  \frac1{\{\lfloor\tfrac\theta\zeta\rfloor\, (y\wedge x)^{\theta-\zeta}\}\vee\{(x\vee y)^{\theta-\zeta}\}} \leq   (x\wedge y)^{\zeta-\theta}.
\end{equation*}
\end{proof}
\begin{proof}[Proof of Lemma~ {\ref{lem:estimation_from_Hquatre}}] We start by proving \eqref{eq:estim_lemma_cases} for the drift term. 

${\footnotesize\bullet}$  First, we consider the case $x\wedge y> \pntCompact$, and without loss of generality, we assume $x\geq y$. Then, $b'$ exists in the interval $(y,x)$ and
\begin{equation*}
\tfrac{b(x)-b(0)}{x} - \tfrac{b(y)-b(0)}{y}  = \tfrac{b(x)-b(y)}{x} + (\frac{1}{x} - \frac{1}{y}) (b(y)-b(0))  = \frac{1}{x} \int_y^x b'(z) dz + (\frac{1}{x} - \frac{1}{y}) (b(y)-b(0)).
\end{equation*}
Observe that $(\frac{1}{x} - \frac{1}{y})<0$, and 
\begin{equation*}
- \Lg  (y^{\alphab}\vee y) \leq  b(y)-b(0) \leq  (\Lg y^{\alphab}\vee y).
\end{equation*}
So 
\begin{equation*}
\tfrac{b(x)-b(0)}{x} - \tfrac{b(y)-b(0)}{y} \leq \tfrac{1}{x} \int_y^x \{ {B_1^\prime} - \alphab \Lg z^{\alphab-1} \}dz  - \Lg (\tfrac{1}{x} - \tfrac{1}{y}) (y^{\alphab}\vee y)\leq  ({B_1^\prime} + L_G)  \tfrac{x-y}{x} - {\Lg}(x^{\alphab -1} - y^{\alphab-1} ).
\end{equation*}
Thus,  we get for all $x>y\geq\pntCompact$,
\begin{equation*}
\begin{aligned}
\left(\frac{b(x)-b(0)}{x} - \frac{b(y)-b(0)}{y} \right)\frac{y}{x-y} 
& \leq ({B_1^\prime}+\Lg) \frac{y}{x} - \Lg y \ \frac{x^{\alphab-1}  -  y^{\alphab -1}}{x-y}. 
\end{aligned}
\end{equation*}
The same results is obtained when $y>x$. 
Now, if $\beta>2$, using \eqref{eq:quotient_estimation}, 
\begin{equation*}
- \Lg y \ \frac{x^{\alphab-1}-y^{\alphab -1}}{x-y}  \leq - \Lg y  \  (x\vee y ) ^{\alphab -2}.
\end{equation*}
Otherwise, we still have
\begin{equation*}
- \Lg y \ \frac{x^{\alphab-1}-y^{\alphab-1}}{x-y}\leq - \Lg y (\alphab-1) \  (x\vee y)^{\alphab -2}, 
\end{equation*}
leading to, for all $\alphab >1$,  
\begin{equation*}
\left(\frac{b(x)-b(0)}{x} - \frac{b(y)-b(0)}{y} \right)\frac{y}{x-y} 
 \leq {B_1^\prime}+\Lg - \Lg y  \   ((\alphab-1) \wedge 1)  \ (x\vee y ) ^{\alphab -2}.
\end{equation*}

${\footnotesize\bullet}$  Now we consider the case  $x\vee y >\pntCompact > x\wedge y$. Let us assume that $x >\pntCompact > y$.  Thus we write
\begin{equation*}
\frac{b(x)-b(0)}{x} - \frac{b(y)-b(0)}{y}
 = \frac{b(x)-b(\pntCompact)}{x} + \frac{b(\pntCompact)-b(y)}{x} + (b(y)-b(0))\left(\tfrac1x   - \tfrac1{y}\right). 
\end{equation*}
We next multiply the right-hand side by $\frac{y}{x-y}$ and need to show that the result is bounded.  Before,  on the first term we apply  \ref{H:sch_control_b_deriv},  on the second \ref{H:control}--\eqref{eq:monotony_b},   and on the third  \ref{H:polygrowth},  to obtain
\begin{equation*}
\begin{aligned}
\tfrac{b(x)-b(0)}{x} - \tfrac{b(y)-b(0)}{y}
& \leq {B_1^\prime}\tfrac{x-\pntCompact}x - \Lg(x^{\alphab-1} - \tfrac{\pntCompact^\alphab}x)+ \osLip (\pntCompact^{\alphab-1}+1) \tfrac{(\pntCompact-y)}{x} +\tfrac{x-y}x \  \Lg \ y^{\alphab-1}\vee 1\\
&\leq (\Lg+ {B_1^\prime}\vee [\osLip(\pntCompact^{\alphab-1}+1)])  \tfrac{x-y}x -\Lg(x^{\alphab-1} - y^{\alphab-1}) + \tfrac{\Lg}x\left(\pntCompact^\alphab - y^\alphab\right).
\end{aligned}
\end{equation*}
Notice that, since $x>\pntCompact>y$,  and $\alphab>1$
\[\tfrac{\pntCompact^\alphab - y^\alphab}{x - y} = \tfrac{\pntCompact^\alphab - y^\alphab}{(x-\pntCompact)+(\pntCompact - y)}< \tfrac{\pntCompact^\alphab - y^\alphab}{\pntCompact - y}\leq \alphab\pntCompact^{\alphab-1}.\] 
In this case, we have for all  $x>\pntCompact>y$, 
\begin{equation*}
\left(\tfrac{b(x)-b(0)}{x} - \tfrac{b(y)-b(0)}{y} \right)\tfrac{y}{x-y}
\leq  (\Lg+ {B_1^\prime}\vee [\osLip(\pntCompact^{\alphab-1}+1)]) + {\Lg}\alphab \pntCompact^{\alphab-1} -\Lg y\frac{x^{\alphab-1} - y^{\alphab-1}}{x - y}.
\end{equation*}
In the symmetric  situation  $x < \pntCompact < y$, we just  interchange the roles between $y$ and $x$, writing first
\begin{equation*}
\frac{b(x)-b(0)}{x} - \frac{b(y)-b(0)}{y}
 = \frac{b(x)-b(\pntCompact)}{y} + \frac{b(\pntCompact)-b(y)}{y} + (b(x)-b(0))\left(\tfrac1x   - \tfrac1{y}\right)
\end{equation*}
and following next the same argument. Finally using again \eqref{eq:quotient_estimation}, we get
\begin{equation*}
\left(\tfrac{b(x)-b(0)}{x} - \tfrac{b(y)-b(0)}{y} \right)\tfrac{y}{x-y}
\leq  (\Lg+ {B_1^\prime}\vee [\osLip(\pntCompact^{\alphab-1}+1)]) + {\Lg}\alphab \pntCompact^{\alphab-1} - \Lg y  \   ((\alphab-1) \wedge 1)  \ (x\vee y ) ^{\alphab -2}.
\end{equation*}

${\footnotesize\bullet}$ When $x\vee y <\pntCompact$, from \ref{H:pieceloclip} and \ref{H:polygrowth} we have  (assuming, for instance, $x>y$):
\begin{equation*}
\frac{b(x)-b(0)}{x} - \frac{b(y)-b(0)}{y}  = \frac{(x-y)}x \frac{b(x)-b(y)}{x-y}  - \frac{(x-y)}x \frac{(b(y)-b(0))}y  \leq \frac{(x-y)}x\big(\osLip(\pntCompact^{\alphab-1} +1) +\Lg\pntCompact^{\alphab-1}\vee 1\big),
\end{equation*}
and therefore, 
\begin{equation*}
\left(\frac{b(x)-b(0)}{x} - \frac{b(y)-b(0)}{y} \right)\frac{y}{x-y}
\leq \big(\osLip +\Lg\big) ((\pntCompact^{\alphab-1}\vee 1) +1),
\end{equation*}
or $\big(\osLip +\Lg\big) (\pntCompact^{\alphab-1} +1)$ 
as desired.   We summarize  the   estimation with the three cases as 
\begin{equation*}
\begin{aligned}
& \frac{y}{x-y}\left(\frac{b(x)-b(0)}{x} - \tfrac{b(y)-b(0)}{y} \right) \\
& \quad \leq \left(
{{B_1^\prime}} + \Lg - \Lg y  \   ((\alphab-1 )\wedge 1)  \ (x\vee y ) ^{\alphab -2} \right)\  \ind_{\{x\wedge y \geq \pntCompact\}}\\
& \qquad  + \left( {B_1^\prime}\vee [\osLip(\pntCompact^{\alphab-1}+1)] + \Lg (\alphab \pntCompact^{\alphab-1}+1) - \Lg y  \   ((\alphab-1) \wedge 1)  \ (x\vee y ) ^{\alphab -2}\right) \ \ind_{\{ x\wedge y < \pntCompact < x\vee y\}} \\
& \qquad  + 
\big(\osLip +\Lg\big) ((\pntCompact^{\alphab-1}\vee 1)+1) \  \ind_{\{x\vee y \leq \pntCompact\}} 
\end{aligned}
\end{equation*}
from which we deduce the constant $\Cczero$.
\medskip

We now prove \eqref{eq:estim_lemma_cases_sigma} for the diffusion term.  
Assuming $x> y$, with the help of \ref{H:diffusion_deriv}, 
\begin{equation*}
\begin{aligned}
\frac{y}{(x-y)^2}\left(\frac{\sigma(x)}x - \frac{\sigma(y)}y\right)^2 
& = y \left( \frac{1}{x-y} ( \frac{1}{x} -  \frac{1}{y} )  \sigma(y)   +  \frac{1}{x(x-y)}  \int_y^x \sigma'(z)dz \right)^2 \\
&\quad \leq 2  y \left(\frac{1}{x^2 y^2 } \diffc^2 y^{2\alpha}    +  \frac{1}{x^2} \diffcd^2 \alpha^2 x^{2\alpha -2}  \right) \leq  2 ( \diffc^2 +  (\alpha\diffcd)^2 ) (x\vee y)^{2\alpha -3}, 
\end{aligned}
\end{equation*}
whatever the sign of $2\alpha -3$. 
The case $x < y$ is similar, changing the pivot in the above equality.
\end{proof}

\subsection{Proof of Proposition \ref{prop:XMoments}}\label{sec:appendix_wellposedness} 
  
Under locally Lipschitz assumption for the drift and diffusion coefficients $b$ and $\sigma$ in the whole domain $\RR^+$, existence and pathwise uniqueness holds for the solution of \eqref{eq:IntroSDE}  (see e.g. Theorem 3.1 in  
\cite{IkedaWatanabe} and Definition 5.1 in 
\cite{KarShr-88}) up to an explosion time. 
Without loss of generality, we  simplify the presentation of  the proof under the assumption \ref{H:pieceloclip} considering only one point of discontinuity for $b$, namely at point $\pntDisc$, the extension to several points being straightforward.
  
Following the proof of Theorem 2.6 in \cite{leobacher_2017}, we define the $C^2$ bump function $\phi$ as
\begin{equation*}
\phi(u) = \left\{ \begin{array}{lr}
(1 + u)^3(1 - u)^3, &\text{ if } |u| \leq 1,\\
0, & \text{otherwise,}
\end{array}
\right.
\end{equation*}
which satisfies that $\phi(0) = 1$, $\phi'(0) = 0$, $\phi''(0) = -6$, and for all $|u| \geq 1$,
$\phi(u) = \phi'(u) = \phi''(u) = 0$. Define now the transform $G_{\theta,c} : \RR^+ \rightarrow \RR^+$ by
\begin{equation*}
G_{\theta,c}(x) = x +  \theta c^2  + \theta\phi\left( \frac{x - \pntDisc}{c}\right)
(x - \pntDisc)|x - \pntDisc|, \quad x \geq 0,  
\end{equation*}
where $\theta \neq 0$ and $0 < c < \frac{1}{6 |\theta|}$ are some constants. 
Then, $G_{\theta,c}(x)$ is strictly  positive for all $x>0$, and $G_{\theta,c}'(x) > 0$ for all $x\geq 0$ and is bounded. Therefore, $G_{\theta,c}$ has a
global inverse $G_{\theta,c}^{-1}$  (see \cite{leobacher_2017}, Lemma 2.2). 
Abbreviating $\bar\phi(x) := \phi(\frac{x - \pntDisc}{c})(x -\pntDisc)|x-\pntDisc|$, we compute the transformation $Z = G_{\theta,c}({X}).$ Then, from It\^o formula:
\begin{equation}\label{eq:SDE_transform}
dZ_t = \widehat{b} (Z_t)dt  + \widehat{\sigma}(Z_t) dW_t, 
\end{equation}
where
\begin{equation*}
\begin{aligned}
\widehat{b}(z) &= b(G_{\theta,c}^{-1}(z)) + \theta \ \bar\phi'(G_{\theta,c}^{-1}(z)) \  b(G_{\theta,c}^{-1}(z))  + \tfrac{\theta}{2}  \ \bar\phi''(G_{\theta,c}^{-1}(z)) \  \sigma^2(G_{\theta,c}^{-1}(z)),\\
\widehat{\sigma}(z) & = \sigma(G_{\theta,c}^{-1}(z)) + \theta \  \bar\phi'(G_{\theta,c}^{-1}(z)) \  \sigma(G_{\theta,c}^{-1}(z)). 
\end{aligned}
\end{equation*}
It is clear that $\widehat{\sigma}$ is continuous. Now, observing $\lim_{x \rightarrow \chi}   \bar\phi'(x) = 0$ and $\lim_{x \rightarrow \chi^+}   \bar\phi''(x) = 2 = -\lim_{x \rightarrow \chi^-}   \bar\phi''(x)$,  a short computation shows that with the choice
\begin{equation*}
\theta  =  \frac{ b(\pntDisc^-) - b(\pntDisc^+) }{ 2 \sigma^2(\pntDisc) }, 
\end{equation*}
we get  $\widehat{b}(G_{\theta,c}(\pntDisc^+)) = \widehat{b}(G_{\theta,c}(\pntDisc^-))$. 
With the continuity of the coefficients $\widehat{b}$, $\widehat{\sigma}$,  and their locally Lipschitz properties conserved by the composition with the locally Lipschitz functions $G_{\theta,c}^{-1}$ and $\bar\phi''$, we can claim that {\it{(i)}} pathwise uniqueness holds for the solution of \eqref{eq:SDE_transform}, and then for the solution of \eqref{eq:IntroSDE}, {\it{(ii)}} a weak solution $Z$ exists, up to an explosion time. 
So a weak solution  ${X}$  to \eqref{eq:IntroSDE} exits  up to an explosion time. 

For simplicity in the coming computations of Feller test,  we consider now the transformation $\widetilde{X}_t = X_t^{-2(\alpha-1)}$ satisfying the one-dimensional SDE
\begin{equation}\label{SDEAux1}
d\widetilde{X}_t=\widetilde{b}(\widetilde{X}_t)dt +\widetilde\sigma(\widetilde{X}_t)~d{W}_t,~~\widetilde{X}_0=x^{-2(\alpha-1)},
\end{equation}
where  the drift function $\widetilde{b}$ and diffusion function $\widetilde\sigma$ are   defined as 
\begin{equation*}
\begin{aligned}
\widetilde{b}(x) &=(\alpha-1)\left((2\alpha-1)x^{\tfrac{2\alpha}{2(\alpha-1)}} \ \sigma^2\big(x^{-\tfrac1{2(\alpha-1)}}\big) -
2{{x}^{\frac{2\alpha-1}{2(\alpha -1)}}\ b({x}^{- \frac{1}{2(\alpha-1)}})}\right)\\
\widetilde{\sigma}(x) &= -2(\alpha-1)x^{\tfrac{2\alpha-1}{2(\alpha-1)}} \  \sigma\Big(x^{-\tfrac1{2(\alpha-1)}}\Big).
\end{aligned} 
\end{equation*}
Defining the explosion time as
\begin{equation*}
S = \lim_{n\rightarrow+\infty} S_n,\quad\text{with } \  S_{n}:=\inf\big\{0\leq t: \widetilde{X}_t \notin (\tfrac{1}{n}, n)\big\},
\end{equation*}
we use a Feller test to show that $\PP(S=+\infty) = 1$, which implies that the explosion time for $X$ is also $+\infty$.

\paragraph{Feller test for non explosion. } 
We add now the hypotheses \ref{H:polygrowth} and \ref{H:control} in the discussion.  This part of the proof follows the computation done in \cite{BoJaMa2021}[Supplementary, also available on arxiv]. For the seek of completeness, and as we extend in this paper the considered drift and diffusion class,  we reproduce the main steps, but remove some details of the computation that can be found in the previous reference. 

Showing that $\PP(S=+\infty) = 1$ is equivalent to show that $v(0^+):=\lim_{x\rightarrow 0^+}v(x) = + \infty$ and $ v(+\infty):=\lim_{x\rightarrow +\infty}v(x) = +\infty$, where $x\mapsto v(x)$ is defined from the scale function $x \mapsto p(x)$ as  
\begin{equation*}
v(x) = \int_c^x p'(y)\big(\int_c^y\tfrac{2~dz}{p'(z)~\widetilde\sigma^2(z)}\big)dy,\quad\quad p(x)=\int_c^x\exp\{-\int_c^z \tfrac{2{\widetilde b}(y)}{\widetilde\sigma^2(y)} dy\} \ dz,  
\end{equation*}
with a fixed $c>1$  (e.g. \cite[Theorem 5.29]{KarShr-88}). From \ref{H:control}, we get the lower bounds for  all $x\in (0,c)$,
\begin{equation}\label{eq:test_b_control_explosion}
\begin{aligned}
\frac{2\widetilde{b}(x)}{\widetilde\sigma^2(x)}  = \frac{2\alpha-1 }{2(\alpha-1)x}-x^{-\tfrac{2\alpha-1}{2(\alpha-1)}}\frac{b\Big(x^{-\tfrac1{2(\alpha-1)}}\Big)}{(\alpha-1)~\sigma^2\Big(x^{-\tfrac1{2(\alpha-1)}}\Big)}\\
\geq \frac{2\alpha-1 }{2(\alpha-1)x}-\frac{b(0)x^{\tfrac{1}{2(\alpha-1)}} + B_1}{(\alpha-1)~x^{\tfrac{2\alpha}{2(\alpha-1)}}~\sigma^2\Big(x^{-\tfrac1{2(\alpha-1)}}\Big)}, 
\end{aligned}
\end{equation}
from which we derive the following estimates for $p(0^+)$:
\begin{equation*}
\begin{aligned}
p(0^+)= -\int_0^c\exp\Big\{\int_z ^c\tfrac{2 \widetilde{b}(y)}{\widetilde\sigma^2(y)} dy\Big\}\ dz \leq -  \int_0^c\exp\Big\{\tfrac{(2\alpha-1)}{2(\alpha-1) } \ \log(\tfrac{c}{z})- \tfrac{b(0)c^{\tfrac{1}{2(\alpha-1)}} + B_1}{\alpha-1}\int_z^c \varphi(y)dy \Big\}\ dz,
\end{aligned}
\end{equation*} 
with $\varphi(y)$ defined as  $\varphi(y) := y^{-\tfrac{2\alpha}{2(\alpha-1)}}\sigma^{-2}\big(y^{-\tfrac1{2(\alpha-1)}}\big) \geq \Sigma^{-2} $.  From Hypothesis \ref{H:polygrowth}, the map $y\mapsto \varphi(y)$ is continuous and bounded, and thus $\int_0^c \varphi(y)dy$ is finite. Furthermore, since $\int_0^c z^{-\tfrac{2\alpha-1}{2(\alpha-1)}}dz$ goes to $+\infty$, we obtain $p(0^+) = -\infty$, which implies that $v(0^+)=+\infty$ (see e.g \cite{KarShr-88}, Problem 5.27).

Rewriting explicitly the function $v$ as:
\begin{equation*}
v(x) = \int_c^x \int_c^y \frac{2}{\widetilde\sigma^2(z)} \ \exp\{-\int_z^y \tfrac{2\widetilde{b}(u)}{\widetilde\sigma^2(u)} du\}  \  dz \ dy, 
\end{equation*}
we check  now that $v(+\infty)=+\infty$. From  \ref{H:polygrowth},  we have, for all  $x>0$,
\begin{equation*}
\frac{2\widetilde{b}(x)}{\widetilde\sigma^2(x)}  \leq 
\tfrac{2\alpha-1 }{2(\alpha-1)} \frac1x  +\tfrac{L_G}{\alpha-1}x^{-\tfrac{2\alpha-1}{2(\alpha-1)}}\frac{ (x^{-\tfrac1{2(\alpha-1)}}\vee x^{-\tfrac{\alphab}{2(\alpha-1)}})}{\sigma^2\Big(x^{-\tfrac1{2(\alpha-1)}}\Big)}
=  \tfrac{2\alpha-1 }{2(\alpha-1)} \frac1x  +\tfrac{L_G}{\alpha-1}\varphi(x)~x^{\tfrac{1}{2(\alpha-1)}} (x^{-\tfrac1{2(\alpha-1)}}\vee x^{-\tfrac{\alphab}{2(\alpha-1)}}), 
\end{equation*}
and for $x>c>1$, 
\begin{equation*}
\frac{2\widetilde{b}(x)}{\widetilde\sigma^2(x)}  \leq 
\frac{2\alpha-1 }{2(\alpha-1)x}+\tfrac{L_G}{\alpha-1}\varphi(x). 
\end{equation*}
\begin{equation*}
\begin{aligned}
v(+\infty)&=  \int_c^{+\infty} \big(\int_c^y  \frac{2}{\widetilde{\sigma}^2(z)} \ \exp\Big\{-\int_z^y \tfrac{2\widetilde{b}(u)}{\widetilde\sigma^2(u)} du\Big\} dz\big)dy \\
&\quad \geq \int_c^{+\infty}\int_c^y  \frac{2}{\widetilde{\sigma}^2(z)} \exp\Big\{  \tfrac{(2\alpha-1)}{2(\alpha-1) } \ \log(\tfrac{z}{y}) - \tfrac{2L_G}{\alpha-1}\int_z^y \varphi(u)du\Big\} \ dz dy \\
&\qquad = \frac{1}{2(\alpha -1)^2} \int_c^{+\infty} y^{-\tfrac{(2\alpha-1)}{2(\alpha-1)}}
\int_c^y   z^{\tfrac{1}{2(\alpha-1)}}  \varphi(z)  \exp\Big\{-\tfrac{2L_G}{\alpha-1}\int_z^y \varphi(u)du\Big\} \ dz \  dy. 
\end{aligned}
\end{equation*}
With the help of an integration by part,
\begin{equation*}
\begin{aligned}
& \int_c^y z^{\tfrac{(2\alpha-1)}{2(\alpha-1)}-1}  \varphi(z)  \exp\Big\{-\tfrac{2L_G}{\alpha-1}\int_z^y \varphi(u)du\Big\} \ dz \\
&=\tfrac{\alpha-1}{2 L_G}  \left(y^{\tfrac{(2\alpha-1)}{2(\alpha-1)}-1} -  c^{\tfrac{(2\alpha-1)}{2(\alpha-1)}-1} \exp\Big\{-\tfrac{2L_G}{\alpha-1}\int_c^y \varphi(u)du\Big\}
- \tfrac{1}{2(\alpha-1)}  \int_c^yz^{\tfrac{(2\alpha-1)}{2(\alpha-1)}-2}  \exp\Big\{-\tfrac{2L_G}{\alpha-1}\int_z^y \varphi(u)du\Big\} dz \right). 
\end{aligned}
\end{equation*}
Thus (by forgetting the multiplicative constant), $v(+\infty) \gtrsim  \int_c^{+\infty}\frac{dy}y  - I_1 - I_2$, with
\begin{equation*}
I_1:= \int_c^{+\infty} y^{-\tfrac{(2\alpha-1)}{2(\alpha-1)}} \exp\Big\{-\tfrac{2L_G}{\alpha-1}\int_c^y \varphi(u)du\Big\}dy,\quad I_2:= \int_c^{+\infty} \int_c^y (y/z)^{-\tfrac{(2\alpha-1)}{2(\alpha-1)}} z^{-2}  \exp\Big\{-\tfrac{2L_G}{\alpha-1}\int_z^y \varphi(u)du\Big\} dz dy. 
\end{equation*}
We claim that $I_1$ and $I_2$ are convergent. Indeed, since $\varphi$ is positive and continuous in $(c,y)$, for all $y>c$, then
\[I_1 \leq \int_c^{+\infty} y^{-\tfrac{(2\alpha-1)}{2(\alpha-1)}}dy<+\infty.\]
Regarding $I_2$, notice that $(y/z)^{-\tfrac{(2\alpha-1)}{2(\alpha-1)}}\leq 1$ for all $z\leq y$, and $\varphi(u)\geq \Sigma^{-2}>0$ for all $u\in\RR^+$. Then,
\begin{equation*}
\begin{aligned}
&I_2 \leq \int_c^{+\infty} \int_z^{+\infty} z^{-2}  \exp\Big\{-\tfrac{2L_G}{\alpha-1}\int_z^y \varphi(u)du\Big\} dy dz \\
& \qquad \leq  \int_c^{+\infty} \int_z^{+\infty} z^{-2}  \exp\Big\{-\tfrac{2L_G}{\Sigma^2(\alpha-1)}(y-z)\Big\} dy dz \  \lesssim \int_c^{+\infty} z^{-2} dz <+\infty.
\end{aligned}
\end{equation*}

Coming back to the estimation of $v(+\infty)$ we can conclude that $v(+\infty)=+\infty$ and thus there exists a unique strictly positive strong solution to \eqref{SDEAux1} for all $t\in[0,T]$.  Using reversely the Lamperti transformation, this immediately implies  that  $X_t={\widetilde{X}_t}^{-\frac1{2(1-\alpha)}}$ satisfies the SDE \eqref{eq:IntroSDE} on $(0,T]$ and the pathwise uniqueness of strictly positive solution is also granted. 

\paragraph{Positive moment bounds. } 
Applying It\^{o}'s formula to the stopped process $({X}_{t\wedge\tau_M};0\leq t\leq T)$ with $x_0 \in(\tfrac{1}{M},M)$ and  $\tau_M=\inf\{t\in[0,T]: {X}_t \notin (\tfrac{1}{M},M)\}$, then following the proof of Lemma \ref{lem:Schememoments}, there exists a constant $C$ such that, for all $M>0$, for all $p>0$ satisfying $p\ind_{2\alpha-1}(\alphab)\leq \tfrac12 + \tfrac{B_2}{\diffc^2}$, we have
\begin{equation*}
\EE[X_{t\wedge \tau_M}^{2p}]\leq x_0^{2p}+C\int_0^t\EE[X_{s\wedge \tau_M}^{2p}]ds.
\end{equation*}
From Gronwall's inequality, we conclude on the $2p$\,th-moment control of $X$. 

\paragraph{Negative moment bounds. } 
For $q>0$, applying It\^{o}'s formula to  $({X}_{t}^{-q}, 0\leq t\leq T)$ (omitting stopping time argument for simplicity),  and using \ref{H:polygrowth} we get
\begin{equation}\label{eq:Ito_negative_moments}
\begin{aligned}
\EE[X_{t}^{-q}] \leq & x_0^{-q} - b(0)q\int_0^t\EE[X_s^{-(q+1)}]ds + q \Lg\int_0^t \EE[X_s^{-q + \alphab-1}\vee X_s^{-q}]ds +q(q+1) \tfrac{\diffc^2}2\int_0^t \EE[X_s^{-q + 2(\alpha-1)}]ds.
\end{aligned}
\end{equation}
All the terms on the right-hand side can be easily bounded in terms of $\int_0^t\EE[X^{-q}_s]ds$. By applying the Gronwall inequality, we can establish the finiteness of all the negative moments. However, this rough estimation has a dependency on the exponent $q$, growing as $\exp(q^2)$.

To address this issue, we show now  that the dependency on the exponent $q$ does not grow faster than $\exp(q \log(q))$. This is crucial for controlling the exponential moments later on. We balance the dependence on $q$ (where only large values of $q$ matter) as follows: for all powers $q> \theta$, we can use Young's inequality to obtain $y^{q-\theta}\leq \frac{\theta}q +\, y^q$.
 Applying this  inequality two times  with $y = X^{-1}$ and $\theta \in\{\alphab -1, 2(\alpha-1)\}$,
in \eqref{eq:Ito_negative_moments}, we obtain
\begin{equation*}
\EE[X_{t}^{-q}]\leq  x_0^{-q} + a_1\ t  + a_2 \ q \int_0^t \EE[X_s^{-q}]ds  - b(0)q\int_0^t\EE[X_s^{-(q+1)}]ds + q^2 \tfrac{\diffc^2}2\int_0^t \EE[X_s^{-q + 2(\alpha-1)}]ds
\end{equation*}
with $a_1=\Lg(\alphab-1)+(\alpha-1)\diffc^2$ and $a_2 = 2\Lg+\tfrac{\diffc^2}2$. 

\paragraph*{When $b(0)\geq \frac{\diffc^2}{2}$, } for all powers $q> \theta$, by
Young's inequality again, we have 
$ q \ y^{q+1-\theta}\leq q^{\tfrac{q+1}{\theta}} +y^{q+1}$. 
Applying this  inequality with  $\theta=2\alpha-1$,  we obtain
\begin{equation*}
\EE[X_{t}^{-q}]\leq  x_0^{-q} + a_1\,t  +q a_2 \int_0^t \EE[X_s^{-q}]ds + \tfrac{\diffc^2}2q^{\tfrac{q+1}{2\alpha-1} + 1} t +(\tfrac{\diffc^2}2- b(0))q\int_0^t\EE[X_s^{-(q+1)}]ds. 
\end{equation*}
From Gronwall inequality we get the estimation of the negative moments
\begin{equation}\label{eq:NegativesM estimation}
\sup_{t\in[0,T]}\EE[{X}_{t}^{-q}]\leq \Big(x_0^{-q} +a_1 T+\tfrac{\diffc^2}2T~q^{\frac{q+2\alpha}{2\alpha-1}}\Big)\exp\{a_2T~q\}, \quad\mbox{for all }q>\alphab-1.
\end{equation}
\paragraph*{When $0\leq b(0)<\diffc^2/2$,} we use the 
Young's inequality $ q \ y^{q-\theta}\leq q^{\tfrac{q}{\theta}} +y^{q}$, with   $\theta = 2(\alpha-1)$ to get 
\begin{equation*}
\EE[X_{t}^{-q}]\leq  x_0^{-q} + a_1\,t  +q a_2' \int_0^t \EE[X_s^{-q}]ds  + \tfrac{\sigma^2}2q^{\tfrac{q}{2(\alpha-1)} + 1} t, 
\end{equation*}
with $a_2' = a_2 + \tfrac{\diffc^2}{2}$. From Gronwall inequality we get the estimation of the negative moments
\begin{equation}\label{eq:NegativesM estimation2}
\sup_{t\in[0,T]}\EE[{X}_{t}^{-q}]\leq \Big(x_0^{-q} +a_1 T+\tfrac{\diffc^2}2T~q^{\frac{q+2(\alpha-1)}{2(\alpha-1)}}\Big)\exp\{
a_2' T~q\}, \quad\mbox{for all }q>\alphab-1.
\end{equation}

\subsubsection*{Exponential moment bound}
By Itô's formula, for all $t\in[0,T]$, 
\begin{equation}\label{eq:Ito_log}
\log\left(\frac{X_t}{x_0}\right) = \left(\int_0^t\frac{b(X_s)}{X_s}ds +\int_0^t \frac{\sigma(X_s)}{X_s}dW_s- \frac{1}{2}\int_0^t\frac{\sigma^2(X_s)}{X_s^2}ds  \right).
\end{equation}
Using  \ref{H:control} to make appear $\int_0^t  X_s^{\alphab -1} ds$  from $\int_0^t\frac{b(X_s)}{X_s}ds$ and multiplying by $\mu>0$, we get
\begin{equation*}
\int_0^t \mu  X_s^{\alphab -1} ds \leq \frac{\mu}{B_2} B_1 t  -    \frac{\mu}{2B_2} \int_0^t \frac{\sigma^2(X_s)}{X_s^2} ds  +  \frac{\mu}{B_2} \int_0^t\frac{\sigma(X_s)}{X_s}dW_s- \frac{\mu}{B_2} \log\left(\frac{X_t}{x_0}\right) + \frac{\mu}{B_2} \int_0^t \frac{b(0)}{X_s} ds.
\end{equation*}
Then, taking the expectation of the $\exp$ of it
\small{
\begin{equation*}
\begin{aligned}
& \EE\Big[\exp\{\mu\int_0^tX_s^{\alphab -1}ds\}\Big]
\leq e^{\frac{\mu B_1 t}{B_2}}  \EE\Big[\big(\tfrac{X_t}{x_0}\big)^{-\tfrac{\mu}{B_2}} 
\exp\Big\{\tfrac{\mu}{B_2} \int_0^t \tfrac{b(0)}{X_s} ds\Big\} 
\exp\Big\{ \tfrac{\mu}{B_2}\Big( \int_0^t\tfrac{\sigma(X_s)}{X_s}dW_s -  \tfrac{1}{2} \int_0^t \tfrac{\sigma^2(X_s)}{X_s^2}ds \Big)\Big\}
\Big]. 
\end{aligned}
\end{equation*}
}
Assuming $\mu < B_2$, and  applying H\"{o}lder's inequality for  $q,p$ such that $1=\frac{1}{q} +\frac{1}{p}+\frac\mu{B_2}$,  we have 
\begin{equation*}
\begin{aligned}
& \EE\Big[\exp\{\mu\int_0^tX_s^{\alphab -1}ds\}\Big]
\leq e^{\frac{\mu B_1 t }{B_2}}  \  \EE^{\frac{1}{q}}\Big[\left(\frac{X_t}{x_0}\right)^{-\frac{\mu}{B_2} q}\Big]  \ 
\EE^{\frac{1}{p}}\Big[\exp\{\frac{\mu}{B_2} p \int_0^t \frac{b(0)}{X_s} ds\}\Big]. 
\end{aligned}
\end{equation*}
\paragraph*{When $b(0)=0$,}we get our bound. If not, expanding the last term in series with parameter $\theta = \frac{\mu b(0) p}{B_2}$, and using Jensen's inequality, 
\begin{equation}\label{eq:expomoments}
\begin{aligned}
\EE^{\frac{1}{p}}\Big[\exp\{\theta \int_0^t \frac{1}{X_s} ds\}\Big]
&\leq C \ \EE^{\frac{1}{p}}\Big[\sum_{k\geq 0}\frac{1}{k!}{\left(\theta\int_0^t\frac{1}{X_s}ds\right)^k}\Big] \leq C \Big(\sum_{k\geq 0}\frac{(\theta~T)^k}{k!}\sup_{t\in[0,T]}\EE[X_t^{-k}]\Big)^{\frac{1}{p}}
\end{aligned}
\end{equation}
where the last inequality above is obtained from the Fubini-Tonelli Theorem. 
It remains to prove the finiteness of the series in the expression above, for which we use the control in $k$ of negative moments in \eqref{eq:NegativesM estimation} and \eqref{eq:NegativesM estimation2}.

\paragraph*{When $0<b(0)<\sigma^2/2$,}we use the estimation \eqref{eq:NegativesM estimation2}
\begin{equation}\label{eq:seriesEstim}
\begin{aligned}
\sum_{k\geq \alphab} \frac{1}{k!}\left(\theta\,T \right)^k\sup_{t\in[0,T]}\EE[X_t^{-k}]&\leq \sum_{k\geq\alphab} \frac{1}{k!}\big(\theta\,T e^{\frac{a_2'}{2}T} \big)^k(x_0^{-k}+a_1 T)+T\tfrac{\diffc^2}2\sum_{k\geq\alphab}\frac{1}{k!}\big(\theta\,T e^{\frac{a_2'}{2}T}\big)^k k^{1+\frac{k}{2(\alpha-1)}}.
\end{aligned}
\end{equation}
The first sum in the right-hand side is clearly finite. For the second, by setting $\zeta(k)=\tfrac{1}{k!}\left(\theta~T \exp\{\frac{a_2'}{2}T\}\right)^k ~k^{\frac{k}{2(\alpha-1)}+1}$, we observe that 
\begin{equation*}
\lim_{k\rightarrow+\infty}\frac{\zeta(k+1)}{\zeta(k)} =\theta\,T e^{\frac{a_2}{2}T}\lim_{k\rightarrow+\infty}\left(\frac{k+1}{k}\right)^{\frac{k}{2(\alpha-1)}+1}~\frac1{(k+1)^{\frac{2\alpha-3}{2(\alpha-1)}}},
\end{equation*}
which converges to zero whenever $\alpha>\frac32$, and thus 
$\sum_{k=0}^{\infty}\zeta(k)$ is finite. 
This ends this part of the proof by substituting \eqref{eq:seriesEstim} bound in \eqref{eq:expomoments}. 

\paragraph*{When $b(0)\geq \tfrac{\diffc^2}2$,}we use instead  \eqref{eq:NegativesM estimation}, obtaining the same relation with updated $\zeta(k)=\tfrac{1}{k!}\left(\theta~T \exp\{\frac{a_2}{2}T\}\right)^k ~k^{\frac{k+2\alpha}{2\alpha-1}}$. In this particular case, we observe that 
\begin{equation*}
\lim_{k\rightarrow+\infty}\frac{\zeta(k+1)}{\zeta(k)}=\theta\,T e^{\frac{a_2}{2}T}\lim_{k\rightarrow+\infty}\left(\frac{k+1}{k}\right)^{\frac{k}{2\alpha-1}+\frac{2\alpha}{2\alpha-1}}~\frac1{(k+1)^{\frac{2\alpha-2}{2\alpha-1}}},
\end{equation*}
which converges to zero for all $\alpha>1$, and thus 
$\sum_{k=0}^{\infty}\zeta(k)$ is also finite in this case.

Next, in order to bound $\sup_{t\in[0,T]}\EE\big[\exp\{\nu \int_0^t \frac{1}{X_s} ds\} \big]$, we just  re-use the computation starting to the right-hand side of \eqref{eq:expomoments} for $\theta =\nu$.

Finally,  we  analyse  $\EE\Big[\exp\{-\upsilon\int_0^t\frac{b(X_s)}{X_s}ds\}\Big]$, for any   $\upsilon>0$. From \eqref{eq:Ito_log}, for any $\epsilon_0>1$.
\begin{equation*}
\begin{aligned}
& \exp\left\{ -\upsilon\int_0^t\frac{b(X_s)}{X_s}ds\right\}= \left(\frac{X_t}{x}\right)^{-\upsilon}\exp\left\{\upsilon\diffc\int_0^tX_s^{\alpha-1}dW_s- \upsilon\frac{\diffc^2}{2}\int_0^tX_s^{2\alpha-2}ds \right\}\\
& \quad = \left(\frac{X_t}{x_0}\right)^{-\upsilon}\exp\left\{\upsilon\diffc\int_0^tX_s^{\alpha-1}dW_s - \epsilon_0\upsilon^2\frac{\diffc^2}2\int_0^tX_s^{2(\alpha-1)}ds\right\} \exp\left\{\upsilon \left(\epsilon_0\upsilon - 1\right)\frac{\diffc^2}2\int_0^tX_s^{2(\alpha-1)}ds \right\}.
\end{aligned}
\end{equation*}
Then, for $\epsilon_{1,2}>1$, such that $\frac{1}{\epsilon_0} +\frac{1}{\epsilon_1}+\frac{1}{\epsilon_2}=1$, we take expectation and apply H\"older inequality, obtaining 
\begin{equation*}
\EE\left[\exp\left\{ -\upsilon\int_0^t\frac{b(X_s)}{X_s}ds\right\}\right]\leq 
\EE^{1/\epsilon_1}\left[\exp\left\{\upsilon\epsilon_1 \left(\epsilon_0\upsilon - 1\right)\frac{\sigma^2}2\int_0^tX_s^{2(\alpha-1)}ds \right\}\right] \EE^{1/\epsilon_2}\left[\left(\frac{X_t}{x_0}\right)^{-\epsilon_2\upsilon}\right].
\end{equation*}
The second  expectation on the right is  finite for any $\upsilon \epsilon_2>0$. The first one is finite whenever $\upsilon\epsilon_1 \left(\epsilon_0\upsilon - 1\right)\frac{\diffc^2}2< B_2$. 
In particular, taking $\epsilon_i = 3$, for $i=0,1,2$ and assuming $3\upsilon \left(3\upsilon - 1\right)\frac{\diffc^2}2< B_2$, we have
\begin{equation*}
\EE\left[\exp\left\{ -\upsilon\int_0^t\tfrac{b(X_s)}{X_s}ds\right\}\right]<+\infty,
\end{equation*}
provided that $\alpha>3/2$ when $0< b(0)<\diffc^2/2$.
\end{document}